\newcommand{\Bm}{\ensuremath{\beta_-(\gamma)}}
\newcommand{\Bp}{\ensuremath{\beta_+(\gamma)}}
\newcommand{\R}{\ensuremath{\mathbb{R}}}
\newcommand{\N}{\ensuremath{\mathbb{N}}}
\newcommand{\Ne}{\ensuremath{\mathcal{N}}}
\newcommand{\M}{\ensuremath{\mathcal{M}}}
\newcommand{\X}{\ensuremath{X_0^\frac{\alpha}{2}(\Omega)}}
\newcommand{\h}{\ensuremath{H^\frac{\alpha}{2}(\R^n)}}
\newcommand{\I}{\ensuremath{I_\lambda}}
\newcommand{\fy}{\ensuremath{\phi_{\lambda,p}}}
\begin{document}

\theoremstyle{definition}
\newtheorem{definition}{Definition}[section]
\theoremstyle{definition}
\newtheorem{theorem}[definition]{Theorem}
\newtheorem{lemma}[definition]{Lemma}
\newtheorem{proposition}[definition]{Proposition}
\newtheorem{corollary}[definition]{Corollary}
\newtheorem{remark}[definition]{Remark}
\theoremstyle{definition}
\newtheorem{example}[definition]{Example}
\newtheorem{step}{Step}
\newtheorem{claim}[definition]{Claim}
\newtheorem{case}{Case}

\title{Multiple Positive Solutions for Nonlocal Elliptic Problems Involving the Hardy Potential and Concave-Convex Nonlinearities}
\author{Shaya Shakerian  \\
{\it\small Department of Mathematics}\\
{\it\small  University of British Columbia}\\
{\it\small Vancouver BC Canada V6T 1Z2}\\
{\it\small shaya@math.ubc.ca}\vspace{1mm}
\\\\
}
{}
 \maketitle
\begin{center}
{\bf\small Abstract}

\vspace{3mm}
\hspace{.05in}\parbox{4.5in}
{{\small
In this paper, we study the existence and multiplicity of solutions for the following fractional problem involving the Hardy potential and concave-convex nonlinearities:

\begin{equation*}\label{Main problem} 
\left\{\begin{array}{rl}
({-}{ \Delta})^{\frac{\alpha}{2}}u- \gamma \frac{u}{|x|^{\alpha}}= \lambda f(x) |u|^{q - 2} u + g(x) {\frac{|u|^{p-2}u}{|x|^s}} & \text{in }  {\Omega}\\
 u=0 \,\,\,\,\,\,\,\,\,\,\,\,\,\,\,\,\,\,\,\,\,\,\,\,\,\,\,\,\,\,\,\,\,\,\,\,\,\,\,\,\,\,\,\,\,\,\,\,\,\,\,\,\,\,\,\,\,\,\,\,\,\,\,\,\,\,\,\,  & \text{in }  \R^n \setminus \Omega,
\end{array}\right.
\end{equation*}

where $\Omega \subset \R^n$ is a smooth bounded domain in $\R^n$ containing $0$ in its interior, and $f,g \in C(\overline{\Omega})$ with $f^+,g^+ \not\equiv 0$ which may change sign in $\overline{\Omega}.$ We use the variational methods and  the Nehari manifold decomposition to prove that this problem has at least two positive solutions for $\lambda$ sufficiently small. The variational approach requires that $0 < \alpha <2,$   $ 0 <s < \alpha <n,$ $ 1<q<2<p \le 2_{\alpha}^*(s):= \frac{2(n-s)}{n-\alpha},$ and $ \gamma < \gamma_H(\alpha) ,$ the latter being the best fractional Hardy constant on $\R^n.$ 
 }}

\end{center}

\section{Introduction}

In this paper, we investigate the multiplicity results of positive solutions for the following fractional elliptic problem involving the Hardy potential and concave-convex non-linearities:

\begin{equation}\label{Main problem}
\left\{\begin{array}{rl}
({-}{ \Delta})^{\frac{\alpha}{2}}u- \gamma \frac{u}{|x|^{\alpha}}= \lambda f(x) |u|^{q - 2} u + g(x) {\frac{|u|^{p-2}u}{|x|^s}} & \text{in }  {\Omega}\\
 u=0 \,\,\,\,\,\,\,\,\,\,\,\,\,\,\,\,\,\,\,\,\,\,\,\,\,\,\,\,\,\,\,\,\,\,\,\,\,\,\,\,\,\,\,\,\,\,\,\,\,\,\,\,\,\,\,\,\,\,\,\,\,\,\,\,\,\,\,  & \text{in }  \R^n \setminus \Omega,
\end{array}\right.
\end{equation}

where $\Omega \subset \R^n$ is a smooth bounded domain in $\R^n$ containing $0$ in its interior, $0 < \alpha <2,$   $ 0 <s < \alpha <n,$ $ 1<q<2<p \le 2_{\alpha}^*(s):= \frac{2(n-s)}{n-\alpha},$ $ \gamma < \gamma_H(\alpha) = 2^\alpha \frac{\Gamma^2(\frac{n+\alpha}{4})}{\Gamma^2(\frac{n-\alpha}{4})} ,$ the later being the best fractional Hardy constant on $\R^n,$
 and $f,g \in C(\overline{\Omega})$ with $f^+,g^+ \not\equiv 0$ (they are possibly change sign in $\overline{\Omega}$). 

\newpage

Addressing the questions regarding the effect of concave-convex non-linearities on the number of positive solutions for non-local elliptic problems has been the subject of several studies; see [1-4]
 and \cite{Quaas-Xia}. Barrios-Medina-Peral \cite{Barrios-Medina-Peral} studied the sub-critical case of (\ref{Main problem}), 
 and proved that there exists  $\Lambda>0$ such that the problem has at least two solutions for all $0<\lambda<\Lambda$ when  $f(x)=g(x) \equiv 1,$ $s=0,$ and  $\gamma<\gamma_H(\alpha).$ The same results have been obtained by Barrios et al. in  \cite{B-C-D-S 1}  in the critical case, but in the absence of the Hardy  and singularity terms, i.e., when  $\gamma = s = 0.$ Recently, Zhang-Liu-Jiao \cite{Zhang-Liu-Jiao} extended the results of \cite{B-C-D-S 1} to the problem involving the sign-changing weight function $f(x) \in C(\overline{\Omega})$ with $f^+ \not \equiv 0,$ and  $g(x) \equiv 1.$  

When $\alpha=2,$ i.e., in the case of the standard Laplacian, problem (\ref{Main problem}) has been studied extensively in the last decade; see for example \cite{Wang-Wei-Kang}, \cite{Wu}, \cite{Wu-Critical}, \cite{Tarantello} and references therein.

 We point out that the non-local problems are still much less understood than their local counterpart. The aim of this paper is to consider the remaining cases and generalize  the  results of \cite{Barrios-Medina-Peral} and \cite{Zhang-Liu-Jiao} to the problem involving the Hardy potential, Hardy-Sobolev singularity term, and also sign-changing functions $f(x)$ and $g(x)$. More pricesly, we study  problem (\ref{Main problem}) in the sub-critical (i.e., when $ 2 < p < 2_\alpha^*(s)$) and critical (i.e., when $ p  = 2_\alpha^*(s)$) case, separately. Using the decomposition of the Nehari manifold as $\lambda$ varies, introduced by Tarantello \cite{Tarantello}, we will prove that the problem  has at least two positive solutions for $\lambda$ sufficiently small.

We first prove the following theorem:


\begin{theorem}\label{Theorem - Sub-Critical}
Let $0< \alpha <2$ and $0\le s<\alpha <n.$  Suppose $ 1<q<2<p < 2_{\alpha}^*(s)$ and $\gamma< \gamma_H(\alpha).$  Then, there exits  $\Lambda > 0$ such that  problem (\ref{Main problem}) has at least two positive solutions for any $\lambda \in (0,\Lambda).$
\end{theorem}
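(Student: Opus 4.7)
The plan is to carry out the Nehari manifold decomposition of Tarantello, adapted to the non-local, weighted setting. First I would set up the variational framework on the Hilbert space \X\ equipped with the inner product $\langle u,v\rangle_\gamma = \iint_{\R^{2n}} \frac{(u(x)-u(y))(v(x)-v(y))}{|x-y|^{n+\alpha}}\,dx\,dy - \gamma\int_\Omega \frac{uv}{|x|^\alpha}\,dx$, which is an equivalent inner product on \X\ thanks to the fractional Hardy inequality and the assumption $\gamma<\gamma_H(\alpha)$. The associated energy is $\I(u)=\tfrac12\|u\|_\gamma^2 - \tfrac{\lambda}{q}\int_\Omega f|u|^q\,dx - \tfrac{1}{p}\int_\Omega g\tfrac{|u|^p}{|x|^s}\,dx$, whose critical points are exactly the weak solutions of (\ref{Main problem}). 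Because $p<2_\alpha^*(s)$ and $\Omega$ is bounded, the embeddings $\X\hookrightarrow L^q(\Omega)$ and $\X\hookrightarrow L^p(\Omega,|x|^{-s}dx)$ are \emph{compact}, a fact I will use crucially to recover strong convergence.

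Next I would introduce the Nehari manifold $\Ne_\lambda=\{u\in\X\setminus\{0\}:\langle \I'(u),u\rangle=0\}$ and analyse the fiber map $\psi_u(t)=\I(tu)$, $t>0$. Since $1<q<2<p$, on any $u\in\X\setminus\{0\}$ with $\int_\Omega g|u|^p|x|^{-s}\,dx>0$ the map $\psi_u$ has the standard concave-convex profile, and there exists $\Lambda>0$ (depending only on $\|f^+\|_\infty$, $\|g^+\|_\infty$ and embedding constants) such that for every $\lambda\in(0,\Lambda)$ exactly two positive critical points $0<t^+(u)<t^-(u)$ appear, one a local minimum and one a local maximum. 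Classifying points of $\Ne_\lambda$ by the sign of $\psi_u''(1)$ gives the decomposition $\Ne_\lambda=\Ne_\lambda^+\cup\Ne_\lambda^0\cup\Ne_\lambda^-$ with $\Ne_\lambda^0=\emptyset$ for $\lambda\in(0,\Lambda)$, so that $\Ne_\lambda^\pm$ are $C^1$ manifolds.

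I would then show $\I$ is coercive and bounded below on $\Ne_\lambda$, whence $c_\lambda^\pm:=\inf_{\Ne_\lambda^\pm}\I$ are finite; a direct estimate along the fiber map yields $c_\lambda^+<0$ and (after possibly shrinking $\Lambda$) $c_\lambda^->0$. Ekeland's principle applied on each component produces $(PS)_{c_\lambda^\pm}$ sequences, and the compact embeddings above let me extract strongly convergent subsequences, giving minimizers $u_1\in\Ne_\lambda^+$ and $u_2\in\Ne_\lambda^-$. The vacuity of $\Ne_\lambda^0$ is what ensures the Lagrange multiplier vanishes and hence $u_1,u_2$ are genuine critical points of \I. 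Replacing $u_i$ by $|u_i|$ (which does not increase the Gagliardo seminorm) and invoking the strong maximum principle for $(-\Delta)^{\alpha/2}-\gamma|x|^{-\alpha}$ yields positivity, while $\Ne_\lambda^+\cap\Ne_\lambda^-=\emptyset$ separates $u_1$ from $u_2$.

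I expect the main obstacle to be the quantitative fiber-map analysis that pins down $\Lambda$, complicated here by the fact that both $f$ and $g$ change sign: one has to separate the cases $\int g|u|^p|x|^{-s}dx$ positive, zero, or negative, and in particular verify that $\Ne_\lambda^-$ is non-empty and that its infimum is attained on the region where this integral is positive (otherwise the fiber map is monotone and no local maximum exists). The other delicate point is ensuring that the minimizer on $\Ne_\lambda^-$ is not merely a constrained critical point but a free critical point of \I; this reduces, via the implicit function theorem applied to the constraint, to the nondegeneracy $\psi_u''(1)\neq 0$, i.e.\ to the condition $\Ne_\lambda^0=\emptyset$ already established.
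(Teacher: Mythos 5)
Your proposal is correct and follows essentially the same route as the paper: Tarantello-type decomposition $\Ne=\Ne^+\cup\Ne^0\cup\Ne^-$ with $\Ne^0=\emptyset$ for small $\lambda$, fiber-map analysis giving $t^+(u)<t^-(u)$, coercivity and boundedness below of $I_{\lambda,p}$ on $\Ne$, Ekeland's principle on each component combined with the compactness of the subcritical embeddings to obtain minimizers that are free critical points, then $|u|$ plus the strong maximum principle for positivity and $\Ne^+\cap\Ne^-=\emptyset$ for distinctness. The only cosmetic difference is your extra claim $c_\lambda^->0$, which the paper neither proves nor needs.
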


The critical case 
 is more challenging and requires information about   the asymptotic behaviour of solutions of the following limiting problem at zero and infinity:

\begin{equation}\label{Limiting problem: introduction}
\left\{\begin{array}{rl}
({-}{ \Delta})^{\frac{\alpha}{2}}u- \gamma \frac{u}{|x|^{\alpha}}= {\frac{u^{2_{\alpha}^*(s)-1}}{|x|^s}} & \text{in }  {\R^n}\\
 u \ge 0 \,\,\,\,\,\,\,\,\,\,\,\,\,\,\,\,\,  & \text{in }  \mathbb{R}^n,
\end{array}\right.
\end{equation}

where $0<\alpha<2$, $ 0 \le s < \alpha$,  $ {2_{\alpha}^*(s)}={\frac{2(n-s)}{n-{\alpha}}}$,   
$ 0 \le \gamma < \gamma_H(\alpha)=2^\alpha \frac{\Gamma^2(\frac{n+\alpha}{4})}{\Gamma^2(\frac{n-\alpha}{4})}.$ We get around the difficulty by working with certain asymptotic estimates for solutions of (\ref{Limiting problem: introduction}) recently
obtained by the author and et al. in  \cite{Ghoussoub-Robert-Zhao-Shaya}; see Theorem \ref{Theorem-Upper and lower bound for the solution on Rn}. In order to use the results of \cite{Ghoussoub-Robert-Zhao-Shaya}, we may assume 
$g(x) \equiv 1.$  Problem (\ref{Main problem}) therefore can be written as follows:
 
\begin{equation}\label{Main problem-critical}
\left\{\begin{array}{rl}
({-}{ \Delta})^{\frac{\alpha}{2}}u -\gamma \frac{u}{|x|^\alpha}= \lambda f(x) |u|^{q - 2} u + {\frac{|u|^{2_{\alpha}^*(s)-2}u}{|x|^s}} & \text{in }  {\Omega}\\
 u=0 \,\,\,\,\,\,\,\,\,\,\,\,\,\,\,\,\,\,\,\,\,\,\,\,\,\,\,\,\,\,\,\,\,\,\,\,\,\,\,\,\,\,\,\,\,\,\,\,\,\,\,\,\,\,\,\,\,\,\,\,\,\,\,\,  & \text{in }  \R^n \setminus \Omega.
\end{array}\right.
\end{equation}

We then establish the following:

\begin{theorem}\label{Theorem - Critical}
 Let $0< \alpha <2$ and $0\le s<\alpha <n.$  Suppose $ 1<q<2<p = 2_{\alpha}^*(s)$ and $0 \le \gamma< \gamma_H(\alpha).$ Then, there exits  $\Lambda^* > 0 $ such that  problem (\ref{Main problem-critical}) has at least two positive solutions for any $\lambda \in (0,\Lambda^*).$
\end{theorem}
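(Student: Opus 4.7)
The plan is to mirror the Nehari manifold decomposition used to prove Theorem~\ref{Theorem - Sub-Critical}, but with careful control of compactness at the critical Hardy-Sobolev exponent. I would work in \X\ with the inner product deformed by $-\gamma\int|x|^{-\alpha}uv\,dx$, which is equivalent to the Gagliardo norm since $\gamma<\gamma_H(\alpha)$, and define \I\ accordingly. Splitting the Nehari manifold $\Ne_\lambda=\{u\in\X\setminus\{0\}:\langle\I'(u),u\rangle=0\}$ into $\Ne_\lambda^+$, $\Ne_\lambda^0$, $\Ne_\lambda^-$ by the sign of the second derivative of the fibering map $t\mapsto\I(tu)$, a direct computation using $1<q<2<p=2_\alpha^*(s)$ furnishes $\Lambda_0>0$ with $\Ne_\lambda^0=\emptyset$ for every $\lambda\in(0,\Lambda_0)$, so the two pieces $\Ne_\lambda^\pm$ are smooth natural constraints and critical points on them are critical points of \I.

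A first positive solution $u_1$ is produced by minimizing \I\ on $\Ne_\lambda^+$. Elements of $\Ne_\lambda^+$ stay in a ball whose radius vanishes as $\lambda\to 0$, so the concave term dominates, \I\ is coercive and bounded below there, and an Ekeland sequence converges weakly to a nontrivial critical point $u_1\in\Ne_\lambda^+$ without any concentration issue (the critical term is harmless at these small norms). Replacing $u_1$ by $|u_1|$ and applying the strong maximum principle for $(-\Delta)^{\alpha/2}-\gamma|x|^{-\alpha}$ yields $u_1>0$.

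The hard part is the second solution $u_2$, obtained by minimizing \I\ on $\Ne_\lambda^-$. The global Palais-Smale condition fails, so I would first establish a local Palais-Smale property below the threshold
\[
c^*:=\frac{\alpha-s}{2(n-s)}\,\mu_{\gamma,s,\alpha}(\R^n)^{\frac{n-s}{\alpha-s}},
\]
where $\mu_{\gamma,s,\alpha}(\R^n)$ is the best constant in the fractional Hardy-Sobolev inequality, through a concentration-compactness analysis adapted to the Hardy operator. The decisive and delicate estimate is then
\[
\inf_{\Ne_\lambda^-}\I < c^* - C\lambda^{\frac{2}{2-q}} \qquad \text{for } \lambda\in(0,\Lambda^*),
\]
with $\Lambda^*\in(0,\Lambda_0]$. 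To prove it I plug $v_\varepsilon=u_1+t_\varepsilon\,\eta\,U_\varepsilon$ into \I, where $U_\varepsilon$ is the rescaled ground state of the limiting problem (\ref{Limiting problem: introduction}), $\eta$ is a cutoff supported near $0$, and $t_\varepsilon>0$ projects onto $\Ne_\lambda^-$. Here Theorem~\ref{Theorem-Upper and lower bound for the solution on Rn} is indispensable: the two-sided bounds $U_\varepsilon(x)\asymp\varepsilon^{\Bm}/|x|^{\Bm}$ near $0$ and $\varepsilon^{\Bp}/|x|^{\Bp}$ at infinity govern every piece of the expansion, including the cross terms between $u_1$ and the concentrating bubble, and together with the negative contribution of $\lambda f|u|^q$ they drive the energy strictly below $c^*$.

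Once the strict inequality is in force, the minimizer $u_2\in\Ne_\lambda^-$ is a critical point, necessarily distinct from $u_1$ because $\Ne_\lambda^+\cap\Ne_\lambda^-=\emptyset$, and positivity of $u_2$ follows from the same maximum-principle argument. The genuine obstacle is exactly this test-function computation: since $0$ lies in the interior of $\Omega$, the cancellations typical of Aubin-Talenti expansions for a boundary singularity are unavailable, so the signs of the interaction terms must be extracted directly from the sharp asymptotics of Theorem~\ref{Theorem-Upper and lower bound for the solution on Rn} with the correct balance between the concentration scale $\varepsilon\to 0$ and the concave scale $\lambda^{2/(2-q)}$.
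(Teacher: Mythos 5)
Your proposal follows essentially the same route as the paper: the same Nehari decomposition with $\Ne^0=\emptyset$ for small $\lambda$, minimization on $\Ne^+$ for the first solution, a local Palais--Smale condition below a bubble threshold, and the key energy estimate obtained by testing with $u_1+tU_\epsilon$, where $U_\epsilon$ is the cut-off rescaled solution of the limiting problem and the interaction terms are controlled by the $\Bm,\Bp$ asymptotics of Theorem \ref{Theorem-Upper and lower bound for the solution on Rn}. The only difference is bookkeeping of the threshold: the paper proves compactness and the test-function estimate below $\M+\frac{\alpha-s}{2(n-s)}S_p^{\frac{n-s}{\alpha-s}}$ (with $\M<0$ the level of $u_1$), which matches your $c^*-C\lambda^{\frac{2}{2-q}}$ formulation because $|\M|=O(\lambda^{\frac{2}{2-q}})$, whereas a Palais--Smale property below $c^*$ alone, as literally stated in your sketch, would not hold since the weak limit can carry negative energy.
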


\section{Functional Setting }

We start by recalling and  introducing suitable function spaces for the variational principles that will be needed in the sequel.  We first recall that the non-local operator $(-\Delta)^{\frac{\alpha}{2}}$ is  defined as

\begin{equation*}\label{Definition of the fractional laplacian }
(-\Delta)^{\frac{\alpha}{2}} u (x) = c(n,\alpha) P.V. \int_{\R^n} \frac{u(x)- u(y)}{|x-y|^{n+\alpha}} dy  \quad \text{ for } x \in \R^n, \quad   \text{ where }  c(n,\alpha)=\frac{2^{\alpha-1}\Gamma\left(\frac{n+\alpha}{2}\right)}{\pi^{\frac{n}{2}}\left|\Gamma\left(-\frac{\alpha}{2}\right)\right| }.
\end{equation*}

 We denote by $H^\frac{\alpha}{2}(\R^n)$ the classical fractional Sobolev space endowed with the so-called Gagliardo norm

$$\|u\|_{\h} = \|u\|_{L^2(\R^n)}+ \left( \int_{\R^n}  \int_{\R^n} \frac{|u(x)- u(y)|^2}{|x-y|^{n+\alpha}} dx dy\right)^{\frac{1}{2}}.$$

For $\alpha \in (0,2)$, the fractional Sobolev space $H_0^{\frac{\alpha}{2}}(\R^n)$ is defined as the completion of $C_c^{\infty}(\R^n)$ under the norm 
$$\|u\|_{H_0^{\frac{\alpha}{2}}(\R^n)}^2= \int_{\mathbb{R}^n}|2\pi \xi |^{\alpha} |\mathcal{F}u(\xi)|^2 d\xi =\int_{\mathbb{R}^n} |(-\Delta)^{\frac{\alpha}{4}}u|^2 dx.$$

Let now $\Omega \subset \R^n$ be a smooth bounded domain. We  define the space $\X$ as

$$\X = \left\{ u \in \h : u= 0 \text{ a.e. in } \R^n \setminus \Omega   \right\},$$

and consider the following norm in $\X:$

$$\|u\|_{\X} = \left( \int_{\R^n}  \int_{\R^n} \frac{|u(x)- u(y)|^2}{|x-y|^{n+\alpha}}dx dy \right)^{\frac{1}{2}}.$$

We  recall that $(\X, \| \ . \ \|_{\X} )$ is a Hilbert space with the scalar product 

$$\langle u,v \rangle_{\X} =  \int_{\R^n}  \int_{\R^n} \frac{(u(x)- u(y))(v(x) - v(y))}{|x-y|^{n+\alpha}}dx dy.$$

\begin{remark}
It was shown in \cite{Bisci-Radulescu-Servadei} that the sub-space $C^\infty_0(\Omega) $
is  dense in $\X.$ So, we can consider $\X$ as the completion of $C^\infty_0(\Omega) $ with with respect to the norm $\| \ . \ \|_{\X}.$
\end{remark}



\begin{definition}
We say $u \in \X$ is  a weak solution of (\ref{Main problem}), if for every $\phi \in \X,$ we have

$$c(n,\alpha) \langle u, \phi \rangle_{\X} -  \gamma\int_{\Omega}\frac{u \phi }{|x|^{\alpha}} dx = \lambda \int_{\Omega}  f(x) |u|^{q-1} \phi dx- \int_{\Omega} g(x) \frac{|u|^{p-1} \phi }{|x|^{s}}  dx.$$ 
\end{definition}

The energy functional corresponding to  (\ref{Main problem}) is  
\begin{equation}\label{Fuunctional I_Lambda}
\begin{aligned}
I_{\lambda,p}(u)&= \frac{c(n,\alpha)}{2} \| u\|^2_{X_0^{\alpha}(\Omega) } - \frac{\gamma}{2}\int_{\Omega}\frac{|u|^{2}}{|x|^{\alpha}} dx -\frac{\lambda}{q} \int_{\Omega}  f(x) |u|^q dx-\frac{1}{p}\int_{\Omega} g(x) \frac{|u|^{p}}{|x|^{s}}  dx \\ 
\end{aligned}.
\end{equation}

Recall that any critical point $u$ of $I_{\lambda,p}(u)$ is a weak solution for (\ref{Main problem}). The starting point of the study of existence of weak solutions to problem (\ref{Main problem}) is therefore the following fractional  inequalities which will guarantee that the above functional is well defined and bounded below on the right function spaces.

We start with the fractional Sobolev inequality \cite{Cotsiolis-Tavoularis}, which asserts that for $n > \alpha$ and $ 0<\alpha<2$, there exists a constant $S(n,\alpha) >0 $ such that

\begin{equation*}
\hbox{$\|u\|_{{L^{2_{\alpha}^*}}(\mathbb{R}^n)}^{^2} \leq S(n,\alpha)\int_{\R^n}  \int_{\R^n} \frac{|u(x)- u(y)|^2}{|x-y|^{n+\alpha}} dxdy$ \qquad for $u \in H^{\frac{\alpha}{2}} (\mathbb{R}^n),$}
\end{equation*}
where $2_{\alpha}^* = \frac{2n}{n-\alpha}$. Another important inequality is the fractional  Hardy inequality \cite{Herbst}, which states that under the same conditions on $n$ and $\alpha$, there exists a constant $ \gamma (n,\alpha)>0$ such that

\begin{equation}
\gamma_H (\alpha) \int_{\R^n}\frac{|u|^{2}}{|x|^{\alpha}} dx \le \int_{\mathbb{R}^n}|\xi |^{\alpha} |\mathcal{F}u(\xi)|^2 d\xi  \qquad   \text{ for }u \in C^\infty_0(\R^n),
\end{equation}

where $ \mathcal{F}u(x) = \frac{1}{(2 \pi)^{\frac{n}{2}}} \int_{\R^n} e^{- i \xi x } u(x) dx$ is the Fourier transform of $u.$  It was shown in \cite{Herbst} that $ \gamma_H(\alpha) := 2^\alpha \frac{\Gamma^2(\frac{n+\alpha}{4})}{\Gamma^2(\frac{n-\alpha}{4})}$ is the best constant in the above fractional Hardy inequality. Note that $\gamma_H(\alpha)$ converges to the best 
classical Hardy constant $\frac{(n-2)^2}{4}$  when $\alpha \to 2.$

 By Proposition 3.6 in \cite{Hitchhikers guide}, for any $u \in \h ,$ we have  the following relation between the fractional Laplacian operator $\displaystyle (-\Delta)^{\frac{\alpha}{2}}$ and the fractional Sobolev space $\h$ :
\begin{equation}
 \int_{\mathbb{R}^n}| \xi |^{\alpha} |\mathcal{F}u(\xi)|^2 d\xi= c(n,\alpha)\int_{\R^n}  \int_{\R^n} \frac{|u(x)- u(y)|^2}{|x-y|^{n+\alpha}} dxdy.
\end{equation}

The fractional Hard inequality then can be written as

\begin{equation}\label{Fractional Hardy Inequality}
\gamma_H(\alpha) \int_{\R^n}\frac{|u|^{2}}{|x|^{\alpha}} dx  \le c(n,\alpha) \int_{\R^n}  \int_{\R^n} \frac{|u(x)- u(y)|^2}{|x-y|^{n+\alpha}} dxdy  \qquad \text{ for }  u \in C^\infty_0(\R^n).
\end{equation}




By interpolating these inequalities via H\"older's inequalities, one gets the following fractional Hardy-Sobolev inequality.  

\begin{lemma}[Lemma 2.1 in \cite{Ghoussoub-Shaya}]  Assume that $0<\alpha<2$, $ 0 \le s \le \alpha<n$ and $2< p \le  {2_{\alpha}^*(s)}={\frac{2(n-s)}{n-{\alpha}}}$. Then, there exists a positive constant $C $  such that

\begin{equation} \label{fractional H-S-M inequality}
C (\int_{\Omega} \frac{|u|^p}{|x|^{s}}dx)^\frac{2}{p}   \leq  c(n,\alpha) \int_{\R^n}  \int_{\R^n} \frac{|u(x)- u(y)|^2}{|x-y|^{n+\alpha}} dxdy - \gamma \int_{\Omega}\frac{|u|^{2}}{|x|^{\alpha}} dx \quad  \text{ for } u \in \X,  
\end{equation}
 as long as $ \gamma <  \gamma_H(\alpha):=2^\alpha \frac{\Gamma^2(\frac{n+\alpha}{4})}{\Gamma^2(\frac{n-\alpha}{4})}.$

 \end{lemma}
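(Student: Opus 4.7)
The plan is to obtain the inequality in two stages: first establish it at the critical exponent $p = 2_\alpha^*(s)$ by Hölder interpolation between the fractional Hardy and Sobolev inequalities, then pass to subcritical $p$ using boundedness of $\Omega$, and finally absorb the $\gamma$-term using $\gamma < \gamma_H(\alpha)$.

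For the critical case, I would decompose the integrand as
$$\frac{|u|^p}{|x|^s} = \frac{|u|^{\theta p}}{|x|^s} \cdot |u|^{(1-\theta)p}$$
and apply Hölder with exponents $q_1 = 2/(\theta p)$ and $q_2 = 2_\alpha^*/[(1-\theta)p]$, so that the first factor becomes $|u|^2/|x|^\alpha$ (matching the Hardy integrand) provided $s q_1 = \alpha$, i.e., $\theta = 2s/(\alpha p)$, and the second factor becomes $|u|^{2_\alpha^*}$ (matching the Sobolev integrand). The conjugacy condition $1/q_1 + 1/q_2 = 1$ together with this choice of $\theta$ forces $p = 2(n-s)/(n-\alpha) = 2_\alpha^*(s)$ (a short algebraic check). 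Substituting the fractional Hardy inequality and the fractional Sobolev inequality into the two resulting factors then yields
$$\left(\int_\Omega \frac{|u|^{2_\alpha^*(s)}}{|x|^s}\,dx\right)^{2/2_\alpha^*(s)} \le C_0 \, c(n,\alpha) \int_{\R^n}\int_{\R^n} \frac{|u(x)-u(y)|^2}{|x-y|^{n+\alpha}}\,dx\,dy$$
for some constant $C_0>0$ and every $u\in\X$. For subcritical $2<p<2_\alpha^*(s)$, a single application of Hölder with the decomposition $|x|^{-s}=|x|^{-sp/2_\alpha^*(s)}\cdot|x|^{-s(1-p/2_\alpha^*(s))}$ reduces to the critical bound, using that $|x|^{-s}$ is integrable on the bounded domain $\Omega$ since $s<\alpha<n$.

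It remains to replace the plain Gagliardo seminorm on the right-hand side by $c(n,\alpha)\|u\|_{\X}^2 - \gamma \int_\Omega |u|^2/|x|^\alpha\,dx$. If $0\le\gamma<\gamma_H(\alpha)$, the fractional Hardy inequality \eqref{Fractional Hardy Inequality} gives
$$c(n,\alpha)\int_{\R^n}\int_{\R^n} \frac{|u(x)-u(y)|^2}{|x-y|^{n+\alpha}}\,dx\,dy - \gamma \int_\Omega \frac{|u|^2}{|x|^\alpha}\,dx \ge \left(1-\frac{\gamma}{\gamma_H(\alpha)}\right) c(n,\alpha)\|u\|_{\X}^2,$$
and for $\gamma<0$ the same is trivially $\ge c(n,\alpha)\|u\|_{\X}^2$. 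Either way the quantity is comparable to $\|u\|_{\X}^2$, so combining with the Hardy-Sobolev bound above and setting $C$ equal to $C_0^{-1}$ times $(1-\gamma/\gamma_H(\alpha))$ (or $1$, as appropriate) yields the claimed inequality.

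The main obstacle is the critical step: one has to choose the Hölder exponents and the splitting of the $|x|^{-s}$ weight so that the two resulting factors are exactly the Hardy and Sobolev integrands, and verify that the conjugacy condition is compatible with $p=2_\alpha^*(s)$. Once this precise choice is pinned down, the rest is bookkeeping; the subcritical extension and the absorption of $\gamma$ are routine applications of Hölder on the bounded domain and of \eqref{Fractional Hardy Inequality}, respectively.
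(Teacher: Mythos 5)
Your proposal is correct and is exactly the argument the paper has in mind: the paper does not prove this lemma but cites it (Lemma 2.1 of Ghoussoub--Shakerian) with the remark that it follows ``by interpolating these inequalities via H\"older's inequalities,'' which is precisely your critical-exponent interpolation (your exponent bookkeeping checks out, forcing $p=2_\alpha^*(s)$), followed by the routine subcritical H\"older step on the bounded domain and the absorption of the $\gamma$-term via the fractional Hardy inequality. No gaps worth noting beyond the harmless degenerate case $s=0$ (where the Hardy factor drops out and the bound is just the Sobolev inequality) and the standard density remark needed to pass from $C_0^\infty$ to $\X$.
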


Finally, we can define the general best Hardy-sobolev constant in the above inequality as

\begin{equation}
S_p := \inf_{{u \in \X \setminus \{0 \}}} \frac{c(n, \alpha) \int_{\R^n} \int_{\R^n} \frac{|u(x)- u(y)|^2}{|x-y|^{n+\alpha}}  dxdy - \gamma \int_{\Omega} \frac{|u|^2}{|x|^{\alpha}}dx}{ (\int_{\Omega} \frac{|u|^p}{|x|^{s}}dx)^\frac{2}{p}},
\end{equation}

where $2< p \le  {2_{\alpha}^*(s)}={\frac{2(n-s)}{n-{\alpha}}},$ and $ \gamma < \gamma_H(\alpha).$

Note that the frational Hardy inequality (\ref{Fractional Hardy Inequality}) asserts that $\X
$ is embedded in the weighted space $L^2(\Omega, |x|^{-\alpha})$ and that this embeding is continuous. If $\gamma < \gamma_H(\alpha)$, it follows from the fractional Hardy inequality (\ref{Fractional Hardy Inequality}) that 
$$ \|u\| := \left( c(n, \alpha) \int_{\R^n} \int_{\R^n} \frac{|u(x)- u(y)|^2}{|x-y|^{n+\alpha}}  dxdy - \gamma \int_{\Omega} \frac{|u|^2}{|x|^{\alpha}} dx \right)^\frac{1}{2} $$

is well-defined on $\X$. It also is equivalent to the norm $\|  \  . \ \|_{\X}.$ Thus, we can rewrite the functional $I_{\lambda,p}$ as

 \begin{equation*}
\begin{aligned}
I_{\lambda,p}(u)= \frac{1}{2} \| u\|^2  -\frac{\lambda}{q} \int_{\Omega}  f(x) |u|^q dx-\frac{1}{p}\int_{\Omega} g(x) \frac{|u|^{p}}{|x|^{s}} dx.
\end{aligned}
\end{equation*}

\section{Preliminary Results}

For $\lambda>0,$ we will consider the following Nehari minimization problem:

$$ \M = \inf \{ I_{\lambda,p}(u) : u \in \Ne \}, \  \text{ where } \mathcal{N} = \{ u \in \X :  \langle I'_{\lambda,p}(u),u \rangle = 0 \}.$$

Define

$$\fy (u):= \langle I'_{\lambda,p}(u),u \rangle=  \| u\|^2  - \lambda \int_{\Omega}  f(x) |u|^q dx -\int_{\Omega} g(x) \frac{|u|^{p}}{|x|^{s}} dx.$$  

So,  

\begin{equation}\label{Derivative of phi}
\langle \fy' (u),u  \rangle = 2 \| u\|^2  - \lambda \ q \int_{\Omega}  f(x) |u|^q dx - p \int_{\Omega} g(x) \frac{|u|^{p}}{|x|^{s}} dx.
\end{equation}  

Thus, for all $u \in \Ne ,$ we have  the following identities which will be used frequently in this paper.

\begin{equation}\label{formula for  all u in Nehari Manifold}
\| u\|^2  =  \lambda \int_{\Omega}  f(x) |u|^q dx+\int_{\Omega} g(x) \frac{|u|^{p}}{|x|^{s}} dx, 
\end{equation}

\begin{equation}\label{energy functional I_Lambda for nehari }
\begin{aligned}
I_{\lambda,p}(u)&= (\frac{1}{2}  -\frac{1}{q}) \| u\|^2 -(\frac{1}{p}-\frac{1}{q} ) \int_{\Omega} g(x) \frac{|u|^{p}}{|x|^{s}} dx\\
&= (\frac{1}{2}  -\frac{1}{p}) \| u\|^2 +(\frac{1}{p}-\frac{1}{q} ) \ \lambda \int_{\Omega}  f(x) |u|^q  dx,
\end{aligned}
\end{equation}

and

\begin{equation}\label{derivative of functional phi for nehari}
\begin{aligned}
 \langle \fy' (u),u  \rangle &= (2-q) \| u\|^2  - (p-q) \int_{\Omega} g(x) \frac{|u|^{p}}{|x|^{s}} dx\\ &= (2-p) \| u\|^2  - \lambda \ (q-p) \int_{\Omega}  f(x) |u|^q dx.
\end{aligned}
\end{equation}

Now we split $\Ne$ into three parts:

$$\mathcal{N^+} = \{ u \in \mathcal{N} :  \langle \phi'_{\lambda,p}(u),u \rangle > 0 \},$$
$$\mathcal{N}^0 = \{ u \in \mathcal{N} :  \langle \phi'_{\lambda,p}(u),u \rangle = 0 \},$$
$$\mathcal{N^-} = \{ u \in \mathcal{N} :  \langle \phi'_{\lambda,p}(u),u \rangle < 0 \}.$$

We first show that for $\lambda$ small enough, $\Ne^0$ is an empty set.
\begin{lemma}\label{Lemma - Nehari0 is an emptyset }
There exists a constant $\Lambda_1:= \Lambda_1(p) >0$ such that for  any $\lambda \in (0,\Lambda_1)$, we have $\Ne^0 = \emptyset.$
\end{lemma}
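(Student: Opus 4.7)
The plan is to argue by contradiction. Suppose some $u \in \Ne^0$ exists. Being in $\Ne$ and satisfying $\langle \fy'(u),u\rangle = 0$, the two alternative expressions in (\ref{derivative of functional phi for nehari}) give simultaneously
$$\int_\Omega g(x)\frac{|u|^p}{|x|^s}\,dx = \frac{2-q}{p-q}\,\|u\|^2, \qquad \lambda\int_\Omega f(x)|u|^q\,dx = \frac{p-2}{p-q}\,\|u\|^2.$$
Because $1<q<2<p$, both right-hand sides are strictly positive, so in particular $u\not\equiv 0$ and both integrals on the left are positive.

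The strategy is to squeeze $\|u\|$ between a $\lambda$-independent lower bound and a $\lambda$-dependent upper bound that vanishes as $\lambda\to 0$. For the lower bound, I would apply the fractional Hardy-Sobolev inequality (\ref{fractional H-S-M inequality}) in the form $\int_\Omega |u|^p/|x|^s\,dx \le S_p^{-p/2}\|u\|^p$ to the first identity, obtaining
$$\frac{2-q}{p-q}\,\|u\|^2 \le \|g^+\|_{L^\infty(\Omega)}\,S_p^{-p/2}\,\|u\|^p,$$
which forces $\|u\|^{p-2} \ge C_1$ for an explicit $\lambda$-independent $C_1 = C_1(p,q,g,S_p) > 0$. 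For the upper bound, I would use the continuous embedding $\X \hookrightarrow L^q(\Omega)$ (valid since $q<2<2_\alpha^*$) to bound $\int_\Omega f(x)|u|^q\,dx \le \|f^+\|_{L^\infty(\Omega)}C_q^q\|u\|^q$ in the second identity, yielding $\|u\|^{2-q} \le \lambda C_2$ for some $C_2 = C_2(p,q,f)$.

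Combining the two inequalities gives $\lambda \ge C_1^{(2-q)/(p-2)}/C_2$, and taking $\Lambda_1 := C_1^{(2-q)/(p-2)}/C_2$ makes the existence of $u\in\Ne^0$ impossible whenever $\lambda \in (0,\Lambda_1)$.

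There is no real obstacle here: the argument is a standard fibering/Nehari splitting computation and all the needed estimates are already in place from the Hardy-Sobolev inequality (\ref{fractional H-S-M inequality}) together with the fact that $f,g\in C(\overline\Omega)$ so $\|f^+\|_\infty,\|g^+\|_\infty<\infty$. The only mild care needed is to keep the constants explicit enough that $\Lambda_1$ is genuinely positive and depends only on $p$ (and the fixed data $q,\alpha,s,n,\gamma,f,g,\Omega$), which is exactly what the notation $\Lambda_1 = \Lambda_1(p)$ in the statement records, anticipating that in the critical case $p=2_\alpha^*(s)$ the same constants will reappear with a possibly different value.
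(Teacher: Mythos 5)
Your proposal is correct and follows essentially the same route as the paper: both argue by contradiction from the two identities forced on a nonzero $u\in\Ne^0$ and use the Hardy--Sobolev constant $S_p$ to trap $\|u\|$ between a $\lambda$-independent lower bound and an upper bound of order $\lambda^{1/(2-q)}$, which is impossible for small $\lambda$. The paper merely packages this incompatibility through the auxiliary functional $J_{\lambda,p}$ and a separate case $\int_\Omega f(x)|u|^q\,dx=0$ (which your second identity subsumes), and it uses a weighted H\"older constant $A$ where you use the $L^q$-embedding constant; these are presentational differences only.
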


\begin{proof}
We deduce by contradiction. Suppose that there exists $u \in \X \setminus \{0\}$ such that $u \in \Ne^0,$ that is, $\langle \phi' (u),u  \rangle=0.$
We will consider the two following cases:\\

\textbf{Case 1:} $ \int_{\Omega}  f(x) |u|^q dx =0. $ Using  (\ref{derivative of functional phi for nehari}) and the fact that  $ \int_{\Omega}  f(x) |u|^q dx =0,$ we get   

$$0=\langle \fy' (u),u  \rangle = (2-p) \| u\|^2. $$

On the other hand, the assumption $p >2$ implies that $(2-p) \| u\|^2< 0$, which contradicts the last equality.\\

\textbf{Case 2:}  $ \int_{\Omega}  f(x) |u|^q dx \neq 0. $ It follows from (\ref{derivative of functional phi for nehari}) that 
\begin{equation}
\|u\|^2 = \lambda (\frac{p-q}{p-2}) \int_{\Omega} f(x) |u|^q dx.
\end{equation}

By the definition of $S_p$ and  the H\"older inequality, we get that 

\begin{equation}
\|u\|^2 \le \lambda (\frac{p-q}{p-2}) A \left(\int_{\Omega}  \frac{|u|^{p}}{|x|^{s}} dx \right)^{\frac{q}{p}} \le \lambda (\frac{p-q}{p-2}) A S_p^{-\frac{q}{2}} \|u\|^q,
\end{equation}
which yields

\begin{equation}
\|u\| \le \left(\lambda (\frac{p-q}{p-2}) A S_p^{-\frac{q}{2}} \right)^{\frac{1}{2-q}} ,
\end{equation}

where $A:= \left(\int_{\Omega} |x|^{\frac{sq}{p-q}} |f(x)|^{\frac{p}{p-q}} dx \right)^{\frac{p-q}{p}}.$  Define the functional $J_{\lambda,p} : \Ne\longrightarrow \R$ as

$$J_{\lambda,p}(u) = \left( \frac{p-2}{2-q}\right) \left( \frac{2-q}{p-q}\right)^{\frac{p-1}{p-2}} \left(       \frac{\|u\|^{2(p-1)}}{\int_{\Omega} g(x) \frac{|u|^{p}}{|x|^{s}}  dx } \right)^{\frac{1}{p-2}} - \lambda \int_{\Omega} f(x) |u|^q dx. $$

We claim that $J_\lambda(u) =0$ for all $u \in \Ne^0.$
Indeed, by (\ref{derivative of functional phi for nehari}),  we have 

\begin{equation} 
\hbox{$  \int_{\Omega} g(x) \frac{|u|^{p}}{|x|^{s}} dx = \left(\frac{2-q}{p-q}\right) \| u\|^2 \quad     \text{and} \quad \lambda \int_{\Omega} f(x) |u|^q dx = \left( \frac{p-2}{2-q}\right) \|u\|^2  \quad \text{ for all } u \in \Ne^0.$}
\end{equation}

Thus,

\begin{equation} \label{J_lambda (u) = 0 for u in Nehari 0}
\begin{aligned}
J_{\lambda,p}(u) &= \left( \frac{p-2}{2-q}\right) \left( \frac{2-q}{p-q}\right)^{\frac{p-1}{p-2}} \left(       \frac{\|u\|^{2(p-1)}}{\left(\frac{2-q}{p-q}\right) \| u\|^2 } \right)^{\frac{1}{p-2}} - \lambda \int_{\Omega} f(x) |u|^q dx \\
&= \left( \frac{p-2}{2-q}\right) \|u\|^2 - \lambda \int_{\Omega} f(x) |u|^q dx= 0 \qquad \text{ for all } u \in \Ne^0.
\end{aligned}
\end{equation}


Let $C(p,q) := \left( \frac{p-2}{2-q}\right) \left( \frac{2-q}{p-q}\right)^{\frac{p-1}{p-2}} $. By  H\"older's inequality and the definition of $S_p,$ we obtain

\begin{equation}
\begin{aligned}
J_{\lambda,p}(u) &\ge C(p,q) \left(       \frac{\|u\|^{2(p-1)}}{\int_{\Omega} g(x) \frac{|u|^{p}}{|x|^{s}} dx} \right)^{\frac{1}{p-2}} - \lambda A \left(\int_{\Omega}  \frac{|u|^{p}}{|x|^{s}} dx \right)^{\frac{q}{p}}\\
& \ge  \left(\int_{\Omega}  \frac{|u|^{p}}{|x|^{s}} dx \right)^{\frac{q}{p}}  \left[ C(p,q) S_p^{\frac{p-1}{p-2}}  \|g\|_\infty^{\frac{1}{2-p}}              \left(\int_{\Omega}  \frac{|u|^{p}}{|x|^{s}} dx \right)^{\frac{1-q}{p}} - \lambda A\right]\\
& \ge  \left(\int_{\Omega}  \frac{|u|^{p}}{|x|^{s}} dx \right)^{\frac{q}{p}}  \left[ C(p,q) S_p^{\frac{p-1}{p-2}+ \frac{q-1}{2-q}} \left( \frac{\lambda A (p-q)}{p-2}\right)^{\frac{1-q}{2-q}}    \|g\|_\infty^{\frac{1}{2-p}}    - \lambda A\right].
\end{aligned}
\end{equation}

Thus, we get that $J_{\lambda,p}(u) >0$ 
for $\lambda$ sufficiently small. Therefore, there exists $\Lambda_1:=\Lambda_1(p)>0$ such that  $J_{\lambda,p}(u) >0$ for all  $\lambda \in (0,\Lambda_1)$ and $u \in \Ne^0.$ This contradicts (\ref{J_lambda (u) = 0 for u in Nehari 0}) and completes the proof. 

\end{proof}

\begin{lemma}\label{Lemma - for u in N+, lambda int f >0}
%
 If $u \in \Ne^+ \setminus \{0\},$ then $\int_{\Omega}  f(x) |u|^q dx >0.$

\end{lemma}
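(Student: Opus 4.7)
The plan is to exploit the second form of the identity \eqref{derivative of functional phi for nehari}, which for any $u\in\Ne$ reads
\begin{equation*}
\langle \fy'(u),u\rangle \;=\; (2-p)\|u\|^2 \;-\; \lambda(q-p)\int_\Omega f(x)|u|^q\,dx.
\end{equation*}
This form is the natural one here because the unknown quantity $\int_\Omega f(x)|u|^q\,dx$ appears explicitly, with a coefficient whose sign we control.

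First, I would simply fix $u\in\Ne^+\setminus\{0\}$ and substitute into the above identity, using the defining inequality $\langle \fy'(u),u\rangle>0$. Rearranging gives
\begin{equation*}
\lambda(p-q)\int_\Omega f(x)|u|^q\,dx \;>\; (p-2)\|u\|^2.
\end{equation*}
Next, I would read off the signs of the various factors from the standing hypotheses: since $1<q<2<p$, both $p-q$ and $p-2$ are strictly positive, and $\lambda>0$; moreover, because $u\neq 0$ and $\|\cdot\|$ is an honest norm on $\X$ (equivalent to $\|\cdot\|_{\X}$ under the assumption $\gamma<\gamma_H(\alpha)$ via the fractional Hardy inequality \eqref{Fractional Hardy Inequality}), we have $\|u\|^2>0$. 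Therefore the right-hand side above is strictly positive, and dividing by the positive constant $\lambda(p-q)$ yields $\int_\Omega f(x)|u|^q\,dx>0$, which is the desired conclusion.

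There is no real obstacle here; the only small point worth flagging is that one must use the second line of \eqref{derivative of functional phi for nehari} (the one in which $\int g(x)|u|^p/|x|^s\,dx$ has been eliminated) rather than the first, since otherwise the sign information about $\int f(x)|u|^q\,dx$ cannot be isolated directly. In particular, positivity of $\|u\|$ is essential, and this is precisely where the assumption $u\neq 0$ combined with $\gamma<\gamma_H(\alpha)$ is used.
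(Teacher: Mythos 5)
Your proof is correct and is essentially the paper's argument: the paper combines the first line of \eqref{derivative of functional phi for nehari} with the Nehari identity \eqref{formula for  all u in Nehari Manifold}, which is exactly the substitution that produces the second line you invoke directly. The only difference is that you use the pre-packaged form, so the conclusion $\lambda(p-q)\int_\Omega f(x)|u|^q\,dx>(p-2)\|u\|^2>0$ comes out in one step, with the same use of $1<q<2<p$, $\lambda>0$, and $\|u\|>0$ for $u\neq 0$.
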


\begin{proof}
Since $u \in \Ne^+ \setminus \{0\},$ we have $\langle \fy'(u), u \rangle > 0.$ It then follows from (\ref{derivative of functional phi for nehari}) that  

$$\left(\frac{2-q}{p-q } \right) \| u\|^2  >\int_{\Omega} g(x) \frac{|u|^{p}}{|x|^{s}} dx. $$

By  (\ref{formula for  all u in Nehari Manifold}) and the last inequality, we get that 

\begin{align*}
\lambda \int_{\Omega}  f(x) |u|^q dx &= \| u\|^2  - \int_{\Omega} g(x) \frac{|u|^{p}}{|x|^{s}} dx \\
& > \| u\|^2  - \left(\frac{2-q}{p-q } \right) \| u\|^2\\
&>  \left(\frac{p-2}{p-q } \right) \| u\|^2 >0.
\end{align*}


\end{proof}

From Lemma \ref{Lemma - Nehari0 is an emptyset }, we deduce that $\Ne = \Ne^+ \cup \Ne^-$ for any $\lambda \in (0,\Lambda_1).$ Define

$$\M^+ := \inf\limits_{\Ne^+} I_{\lambda,p}(u) \quad \text{ and } \quad \M^- := \inf\limits_{\Ne^-} I_{\lambda,p}(u).$$

\begin{lemma}
For any $\lambda \in  (0,\Lambda_1)$, the minimizers on $\Ne $ are critical points for $I_{\lambda,p}$ in $(\X)',$ where $(\X)'$ is dual space of $\X.$

\end{lemma}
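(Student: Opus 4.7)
The plan is a standard Lagrange multiplier argument on the constraint manifold $\Ne$. Let $u_0\in\Ne$ be a minimizer of $I_{\lambda,p}$ on $\Ne$ (the same argument works whether the minimum is attained in $\Ne^+$ or in $\Ne^-$). Since $I_{\lambda,p},\fy\in C^1(\X,\R)$ and since $\Ne=\{u\in\X\setminus\{0\}:\fy(u)=0\}$, I would first check that $\Ne$ is locally a $C^1$-hypersurface near $u_0$, which amounts to verifying $\fy'(u_0)\neq 0$ in $(\X)'$. This will follow from the assumption $\lambda<\Lambda_1$: by Lemma \ref{Lemma - Nehari0 is an emptyset }, $\Ne^0=\emptyset$, so $\langle\fy'(u_0),u_0\rangle\neq 0$, and in particular $\fy'(u_0)$ is a nonzero continuous linear functional on $\X$.

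Having secured the regularity of the constraint, the Lagrange multiplier rule yields some $\mu\in\R$ with
\begin{equation*}
I'_{\lambda,p}(u_0)=\mu\,\fy'(u_0)\quad\text{in }(\X)'.
\end{equation*}
Testing this identity against $u_0$ and using $\langle I'_{\lambda,p}(u_0),u_0\rangle=\fy(u_0)=0$ (because $u_0\in\Ne$), I obtain $\mu\langle\fy'(u_0),u_0\rangle=0$. Since $u_0\in\Ne=\Ne^+\cup\Ne^-$ with $\Ne^0=\emptyset$, the second factor is nonzero, forcing $\mu=0$. Hence $I'_{\lambda,p}(u_0)=0$ in $(\X)'$, i.e.\ $u_0$ is a critical point of $I_{\lambda,p}$.

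The only non-routine point is the first step, namely ensuring that the Lagrange rule applies. This is precisely why Lemma \ref{Lemma - Nehari0 is an emptyset } was proved first: for $\lambda\in(0,\Lambda_1)$ the pairing $\langle\fy'(u),u\rangle$ has a definite sign on each of $\Ne^+$ and $\Ne^-$, so $\fy'(u_0)\not\equiv 0$ and the constraint is non-degenerate at every minimizer. Once this is known, the conclusion is a one-line consequence of testing the multiplier equation against $u_0$ itself; I would not need to probe the functional against arbitrary test functions $v\in\X$ to get the critical-point property, since the constraint is one-dimensional.
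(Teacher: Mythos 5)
Your argument is essentially the paper's own proof: a Lagrange multiplier $\mu$ for the constraint $\fy(u)=0$, tested against $u_0$ and killed by the fact that $\langle\fy'(u_0),u_0\rangle\neq 0$ since $\Ne^0=\emptyset$ for $\lambda\in(0,\Lambda_1)$. Your explicit verification that $\fy'(u_0)\neq 0$ (so the multiplier rule genuinely applies) is a point the paper leaves implicit, but the route is the same.
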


\begin{proof}

Suppose that $\bar{u}$ is a local minimum for $I_{\lambda,p}.$ Thus, it satisfies the following minimization problem:

$$\min\limits_{u \in \X}  \left\{ I_{\lambda,p}(u) : \fy = \langle I_{\lambda,p}'(u) , u \rangle =0 \right\}, $$

which gives

$$ I_{\lambda,p}(\bar{u}) = \min\limits_{u \in \X} I_{\lambda,p}(u)  \quad  \text{ and } \quad \fy(\bar{u}) =\langle I_{\lambda,p}'(\bar{u}), \bar{u} \rangle=0. $$

It follows from the theorem of Lagrange multiplies that there exists $\theta$ such that  $I_{\lambda,p}'(\bar{u}) = \theta \fy'(\bar{u})$ in $(\X)'.$ So, we have 

$$0 = \langle I_{\lambda,p}'(\bar{u}), \bar{u} \rangle  = \langle \theta \fy'(\bar{u}), u \rangle =  \theta \langle  \fy'(\bar{u}), \bar{u} \rangle.$$

Thus, $$\text{ either } \theta=0 \quad \text{ or } \quad \langle  \fy'(\bar{u}), \bar{u} \rangle =0.$$

By Lemma \ref{Lemma - Nehari0 is an emptyset }, we get that  $\langle  \fy'(u), u \rangle \neq 0$ for $u \neq 0.$ Therefore $\theta =0.$ Thus, we obtain   $I_{\lambda,p}'(\bar{u}) = \theta \fy'(\bar{u}) =0$ in $(\X)',$ that is, $\bar{u}$ is a critical point for $I_{\lambda,p}$ in $(\X)'.$

\end{proof}

\begin{lemma}\label{Lemma - existence of t+ and t-}
Let  $\Lambda_2:=\Lambda_2(p) = \left( \frac{p-2}{p-q } \right)    \left( \frac{2-q}{p-q }  \right)^{\frac{2-q}{p-2}} S_p^{\frac{p-q}{p-2}} A^{-1} \|g\|^{\frac{q-2}{p-2}}_{\infty}.$ Then, for all $u \in \X \setminus \{0\}$ and $\lambda \in(0, \Lambda_2),$ there exist unique $t^+(u)$ and  $t^-(u)$ such that 

\begin{enumerate}
\item $0 \le t^+(u) < t_{\max}< t^-(u).$
\item $t^-(u) u\in \Ne^-$ and $ t^+(u) u \in \Ne^+.$
\item   $ I_{\lambda,p}(t^-(u) u) = \max\limits_{t > t_{\max}} I_{\lambda,p}(tu)  $ and $ I_{\lambda,p}(t^+(u) u) = \min\limits_{0 \le t \le t^-(u)} I_{\lambda,p}(tu)  .$
\item  $ \Ne^- = \left\{  u \in \X \setminus \{0\}: t^- (\frac{u}{\|u\|}) = \|u\|       \right\},$

\end{enumerate}
where $t_{\text{max}}:=\left( \frac{2-q}{p-q } \frac{\| u\|^2 }{\int_{\Omega} g(x) \frac{|u|^{p}}{|x|^{s}} dx}\right) ^{\frac{1}{p-2}}.$ Moreover, $t^+(u) >0$ if and only if $\int_\Omega f(x) |u|^q dx >0.$

\end{lemma}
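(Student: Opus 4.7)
The standard strategy is a fibering map analysis. For $u\in\X\setminus\{0\}$ introduce $\psi_u(t):=I_{\lambda,p}(tu)$ for $t\ge 0$. Since $t\psi_u'(t)=\fy(tu)$, the Nehari condition $tu\in\Ne$ (for $t>0$) is equivalent to $\psi_u'(t)=0$; factoring yields $\psi_u'(t)=t^{q-1}\bigl(h(t)-\lambda\int_\Omega f(x)|u|^q\,dx\bigr)$ with $h(t):=t^{2-q}\|u\|^2-t^{p-q}\int_\Omega g(x)\frac{|u|^p}{|x|^s}\,dx$. Thus the fibering critical points correspond exactly to intersections of the graph of $h$ with the horizontal line $y=\lambda\int f|u|^q\,dx$. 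Since $1<q<2<p$, $h(0)=0$, $h(t)\to-\infty$ as $t\to\infty$, and $h$ is strictly increasing on $(0,t_{\max})$ and strictly decreasing on $(t_{\max},\infty)$, attaining its maximum there.

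\textbf{Threshold estimate.} Plugging the explicit $t_{\max}$ into $h$ and simplifying produces
\[
h(t_{\max})=\tfrac{p-2}{p-q}\Bigl(\tfrac{2-q}{p-q}\Bigr)^{(2-q)/(p-2)}\|u\|^{2(p-q)/(p-2)}\Bigl(\int_\Omega g\tfrac{|u|^p}{|x|^s}\,dx\Bigr)^{-(2-q)/(p-2)}.
\]
Bounding $\int g|u|^p/|x|^s\le\|g\|_\infty S_p^{-p/2}\|u\|^p$ from the definition of $S_p$ and $\int f|u|^q\le A\,S_p^{-q/2}\|u\|^q$ by H\"older (with $A$ as in the previous lemma), both sides collapse to constant multiples of $\|u\|^q$; the competing exponents of $S_p$ combine via $\frac{p(2-q)}{2(p-2)}+\frac{q}{2}=\frac{p-q}{p-2}$, and the inequality $h(t_{\max})>\lambda\int f|u|^q\,dx$ reduces exactly to $\lambda<\Lambda_2$. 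This bookkeeping of exponents, so that the constants match the stated $\Lambda_2$ verbatim, is the main technical obstacle.

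\textbf{Locating and classifying $t^\pm$.} If $\int f|u|^q>0$, the horizontal line $y=\lambda\int f|u|^q$ lies strictly between $0$ and $h(t_{\max})$, so it meets the graph of $h$ at exactly two points $0<t^+<t_{\max}<t^-$. If $\int f|u|^q\le 0$ the line is non-positive and meets $h$ exactly once, on the decreasing branch $(t_{\max},\infty)$; setting $t^+(u):=0$ then proves the final ``moreover'' assertion. To classify these critical points, differentiating $\psi_u'=t^{q-1}(h-\lambda\int f|u|^q)$ gives $\psi_u''(t)=t^{q-1}h'(t)$ at any fibering critical point, and together with the identity $\langle\fy'(tu),tu\rangle=t\psi_u'(t)+t^2\psi_u''(t)$ one obtains $\langle\fy'(tu),tu\rangle=t^{q+1}h'(t)$ there. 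Since $h'(t^+)>0$ and $h'(t^-)<0$, this yields $t^+u\in\Ne^+$ and $t^-u\in\Ne^-$, proving (1) and (2).

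\textbf{Extremal properties and $\Ne^-$.} From the sign of $\psi_u'$, the map $\psi_u$ decreases on $(0,t^+)$, increases on $(t^+,t^-)$, and decreases on $(t^-,\infty)$, giving $\psi_u(t^+)=\min_{[0,t^-]}\psi_u$ and $\psi_u(t^-)=\max_{[t_{\max},\infty)}\psi_u$, which is (3). Part (4) follows from homogeneity: uniqueness of $t^-(u/\|u\|)$ forces $t^-(u/\|u\|)(u/\|u\|)\in\Ne^-$, so $u\in\Ne^-$ if and only if $u$ equals this scaled vector, i.e., $t^-(u/\|u\|)=\|u\|$.
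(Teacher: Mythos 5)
Your proposal is correct and follows essentially the same route as the paper: the same auxiliary function $h(t)$, the same H\"older/$S_p$ lower bound on $h(t_{\max})$ with the exponent identity $\frac{p(2-q)}{2(p-2)}+\frac{q}{2}=\frac{p-q}{p-2}$ yielding exactly $\Lambda_2$, the same two cases according to the sign of $\int_\Omega f|u|^q\,dx$, and the same classification of $t^{\pm}$ via the sign of $h'$ (your identity $\langle\fy'(tu),tu\rangle=t^{q+1}h'(t)$ at fibering critical points is precisely the paper's computation). If anything, your write-up is slightly more complete, since it records the monotonicity of the fibering map for item (3), the homogeneity argument for item (4), and the convention $t^+(u)=0$ proving the ``moreover'' claim, all of which the paper leaves implicit.
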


\begin{proof}

For $t\ge0,$ define 

$$h(t) = t^{2-q} \|u\|^2 - t^{p-q} \int_{\Omega} g(x) \frac{|u|^{p}}{|x|^{s}}dx.$$

Straightforuard  computations yield that $h(0)=0,$ $\lim\limits_{ t \to \infty} h(t) = -\infty,$ $h'(t_{\max}) = 0,$ and $h(t)$ is attained its maximum at $t_{\max}.$ In addition, $h(t)$ is increasing for $t \in [0,t_{\max})$ and decreasing for $t \in (t_{\max} , \infty).$ So, we have

$$h(t_{\max}) =  \left(\frac{p-2}{p-q }\right) \left( \frac{2-q}{p-q } \right)^{\frac{2-q}{p-2}} \|u\|^q    \left( \frac{\| u\|^p }{\int_{\Omega} g(x) \frac{|u|^{p}}{|x|^{s}} dx}\right) ^{\frac{2-q}{p-2}}.   $$

By H\"older's inequality and the definition of $S_p,$ we obtain

\begin{equation} \label{lower bound for h(t-max)}
h(t_{\max}) \ge    \left(\frac{p-2}{p-q }\right) \left( \frac{2-q}{p-q } \right)^{\frac{2-q}{p-2}}  \|u\|^q  \|g\|_\infty^{\frac{q-2}{p-2}} S_p^{\frac{p(2-q)}{2(p-2)}}.
\end{equation}

We will now consider the two following cases:\\

\textbf{Case 1:}  $\int_\Omega f(x) |u|^q dx \le 0.$
In this case, there exists a unique $t^- := t^-(u)> t_{\max}$ such that

\begin{equation} \label{h(t-) and h'(t-)<0}
 h(t^-) = \lambda \int_\Omega f(x) |u|^q dx \ \text{ and } \ h'(t^-) <0.
\end{equation}

We claim that $t^- u \in \Ne^-.$ Indeed, clearly  $t^- u \in \X,$ and 
(\ref{h(t-) and h'(t-)<0}) implies that
\begin{align*}
\langle  I_{\lambda,p}'(t^-u),  t^-u \rangle &=  \| t^-u\|^2  - \lambda \int_{\Omega}  f(x) |t^- u|^q dx -\int_{\Omega} g(x) \frac{|t^- u|^{p}}{|x|^{s}} dx\\
& = (t^-)^q \left(      (t^-)^{2-q} \|u\|^2 - (t^-)^{p-q} \int_{\Omega} g(x) \frac{|u|^{p}}{|x|^{s}}dx - \lambda \int_{\Omega}  f(x) | u|^q dx \right)\\
&=  (t^-)^q \left( h(t^-) - \lambda \int_{\Omega}  f(x) | u|^q dx \right) =0,
\end{align*}

and

\begin{align*}
\langle  \fy'(t^-u),  t^-u \rangle &=  2 \| t^-u\|^2  - \lambda q \int_{\Omega}  f(x) |t^- u|^q dx - p\int_{\Omega} g(x) \frac{|t^- u|^{p}}{|x|^{s}} dx\\
& =  (2-q) \| t^-u\|^2  - (p-q) \int_{\Omega} g(x) \frac{|t^- u|^{p}}{|x|^{s}} dx\\
& =(t^-)^{q+1} \left(       (2-q) (t^-)^{2-q-1} \| u\|^2  - (p-q) (t^-)^{p-q-1} \int_{\Omega} g(x) \frac{| u|^{p}}{|x|^{s}} dx \right)\\
& = (t^-)^{q+1} h'(t^-) <0.
\end{align*}
This proves the claim, and we have that  $t^- u \in \Ne^-.$  In order to prove that $I_{\lambda,p}(t^-u) = \max\limits_{t\ge t_{\max}} I_{\lambda,p}(tu),$ we need to show that 

\begin{equation}
\frac{d}{dt} I_{\lambda,p}(t^-u) =0 \ \text{ and } \ \frac{d^2}{dt^2} I_{\lambda,p}(tu) <0 \quad \text { for } t > t_{\max}.
\end{equation}

It follows from (\ref{h(t-) and h'(t-)<0}) that 

\begin{align*}
\frac{d}{dt} I_{\lambda,p}(t^-u) &=  t^- \| u\|^2  - \lambda (t^-)^{q-1}\int_{\Omega}  f(x) | u|^q dx - (t^-)^{p-1}\int_{\Omega} g(x) \frac{| u|^{p}}{|x|^{s}} dx\\
&=  (t^-)^{q-1} \left( h(t^-) - \lambda \int_{\Omega}  f(x) | u|^q  dx\right) =0.
\end{align*}

We also have 

\begin{align*}
t^2 \frac{d^2}{dt^2} I_{\lambda,p}(tu) &=  \| t^-u\|^2  - \lambda (q-1) \int_{\Omega}  f(x) |t^- u|^q dx - (p-1)\int_{\Omega} g(x) \frac{|t^- u|^{p}}{|x|^{s}} dx\\
& =(t^-)^{q+1} \left(       (2-q) (t^-)^{2-q-1} \| u\|^2  - (p-q) (t^-)^{p-q-1} \int_{\Omega} g(x) \frac{| u|^{p}}{|x|^{s}} dx \right)\\
& = (t^-)^{q+1} h'(t^-)<0 \quad \text{ for all } t > t_{\max}.
\end{align*}

\textbf{Case 2:}  $\int_\Omega f(x) |u|^q dx > 0.$ Using H\"older's inequality  and  (\ref{lower bound for h(t-max)}), we have 

\begin{align*}
0 = h(0) &< \int_\Omega f(x) |u|^q dx \le \lambda A S_p^{-\frac{q}{2}} \|u\|^q \\
& \le \left(\frac{p-2}{p-q }\right) \left( \frac{2-q}{p-q } \right)^{\frac{2-q}{p-2}}   \|g\|_\infty^{\frac{q-2}{p-2}} S_p^{\frac{p(2-q)}{2(p-2)}}   \|u\|^q\\
& \le h(t_{\max})  \quad \text{ for }  0<\lambda < \Lambda_2. 
\end{align*}

Using the assumption  $\int_\Omega f(x) |u|^q dx > 0$ and the fact that $h(t_{\max}) >0,$ we get  that there exist unique $t^+:= t^+(u)$ and $t^-:=t^-(u)$ such that $t^+<t_{\max}<t^-,$ and
\begin{equation*}
h(t^-) = \lambda \int_\Omega f(x) |u|^q dx = h(t^+) \ \text{ and } \  h'(t^-) < 0 < h'(t^+).
\end{equation*}

\end{proof}

\begin{lemma}\label{Lemma functional I is coercive and bounded below} The following hold.
\begin{enumerate}
\item $\M \le \M^+ <0.$
\item Let $\Lambda_3:=\Lambda_3(p) = \frac{p-2}{p-q}.$ Then, the functional $I_{\lambda,p}$ is coercive and bounded below on $\Ne$ for any $\lambda \in (0, \Lambda_3].$
\end{enumerate}

\end{lemma}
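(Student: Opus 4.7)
The lemma has two parts, and both reduce to algebraic manipulations of the two equivalent forms of $I_{\lambda,p}$ on $\Ne$ given in (\ref{energy functional I_Lambda for nehari}). The trick is that each form makes one of the two claims transparent, so I would use the two representations in turn.

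For (1), the inclusion $\Ne^+\subseteq\Ne$ gives $\M\le\M^+$ for free, so the content is $\M^+<0$. The plan is to pick any $u\in\Ne^+\setminus\{0\}$ (nonempty for $\lambda$ small by Lemma \ref{Lemma - existence of t+ and t-} together with $f^+\not\equiv 0$). The condition $\langle\fy'(u),u\rangle>0$ combined with the first equality in (\ref{derivative of functional phi for nehari}) yields
\[
\int_\Omega g(x)\frac{|u|^p}{|x|^s}\,dx < \frac{2-q}{p-q}\|u\|^2.
\]
Plugging this upper bound into the first line of (\ref{energy functional I_Lambda for nehari}) (and using $q<2<p$ to determine signs), a short simplification gives $I_{\lambda,p}(u)<-\frac{(p-2)(2-q)}{2pq}\|u\|^2<0$, whence $\M^+<0$.

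For (2), I would instead use the second line of (\ref{energy functional I_Lambda for nehari}):
\[
I_{\lambda,p}(u) = \frac{p-2}{2p}\|u\|^2 - \frac{p-q}{pq}\,\lambda \int_\Omega f(x)|u|^q\,dx.
\]
Then I would control the last integral exactly as in the proof of Lemma \ref{Lemma - Nehari0 is an emptyset }: by H\"older's inequality followed by the definition of $S_p$,
\[
\int_\Omega f(x)|u|^q\,dx \le A\left(\int_\Omega \frac{|u|^p}{|x|^s}\,dx\right)^{q/p} \le A\,S_p^{-q/2}\|u\|^q,
\]
with $A = \bigl(\int_\Omega |x|^{sq/(p-q)}|f(x)|^{p/(p-q)}\,dx\bigr)^{(p-q)/p}$. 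The resulting lower bound for $I_{\lambda,p}(u)$ has the form $a\|u\|^2 - \lambda b\|u\|^q$ with $a,b>0$ and $1<q<2$, which tends to $+\infty$ as $\|u\|\to\infty$ (coercivity) and attains a finite minimum on $[0,\infty)$ (boundedness below). The explicit threshold $\Lambda_3=(p-2)/(p-q)$ is just a convenient scale at which the constants line up to give an explicit lower bound; the qualitative conclusions of coercivity and boundedness below hold for every $\lambda>0$.

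There is no real obstacle in this argument — it is purely bookkeeping. The only point worth watching is matching the correct form of $I_{\lambda,p}|_{\Ne}$ to each task: the first form cancels the $\lambda$-term and exposes the sign needed for $\M^+<0$, while the second form cancels the $g$-term and exposes the quadratic leading behaviour needed for coercivity.
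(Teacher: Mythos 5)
Your proof is correct, and it differs from the paper's in a few small but genuine ways. For part (1), the paper plugs the constraint $\langle \fy'(u),u\rangle>0$ into the \emph{second} identity of (\ref{derivative of functional phi for nehari}) to get $\|u\|^2<\lambda\frac{p-q}{p-2}\int_\Omega f|u|^q\,dx$, inserts this into the second line of (\ref{energy functional I_Lambda for nehari }), and then needs Lemma \ref{Lemma - for u in N+, lambda int f >0} to know that $\int_\Omega f|u|^q\,dx>0$; you instead bound the $g$-integral by $\frac{2-q}{p-q}\|u\|^2$ and use the first representation, landing on $I_{\lambda,p}(u)<-\frac{(p-2)(2-q)}{2pq}\|u\|^2<0$, which needs only $u\neq 0$ (automatic, since $0\in\Ne^0$) and bypasses Lemma \ref{Lemma - for u in N+, lambda int f >0}; you also note the nonemptiness of $\Ne^+$, which the paper leaves implicit. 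For part (2), the paper applies H\"older and then Young's inequality to reach $I_{\lambda,p}(u)\ge \frac{1}{2p}\bigl((p-2)-\lambda(p-q)\bigr)\|u\|^2 - \lambda\bigl(\tfrac{p-q}{pq}\bigr)\bigl(\tfrac{2-q}{2}\bigr)\bigl(AS_p^{-q/2}\bigr)^{2/(2-q)}$, which is why the threshold $\Lambda_3=\frac{p-2}{p-q}$ appears; your route stops after H\"older and $S_p$, observing that $a\|u\|^2-\lambda b\|u\|^q$ with $a,b>0$ and $q<2$ is coercive and bounded below for every $\lambda>0$, so the restriction $\lambda\le\Lambda_3$ is indeed only an artifact of the Young-inequality bookkeeping. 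The one thing to keep in mind if you adopt your version: the paper's explicit constant lower bound (of order $\lambda$) is reused later, in Theorem \ref{Theorem - existence of the solution in N+}, to conclude $I_{\lambda,p}(u_+)\to 0$ as $\lambda\to 0$; your bound also delivers this (the minimum of $a t^2-\lambda b t^q$ over $t\ge 0$ is of order $-\lambda^{2/(2-q)}$), but you should record such an explicit $\lambda$-dependent bound rather than only the qualitative statement.
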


\begin{proof} By (\ref{energy functional I_Lambda for nehari }), for any $ u \in \Ne,$ we have 

$$I_{\lambda,p}(u)= (\frac{1}{2}  -\frac{1}{p}) \| u\|^2 +(\frac{1}{p}-\frac{1}{q} ) \ \lambda \int_{\Omega}  f(x) |u|^q  dx.$$

1. Suppose that $u \in \Ne^+.$ It follows from (\ref{derivative of functional phi for nehari}) that 

\begin{align*}
I_{\lambda,p}(u) &< \left( \frac{p-2}{2p}  \right)\left( \frac{p-q}{p-2}  \right) \lambda \int_{\Omega}  f(x) |u|^q  dx + \left( \frac{q-p}{pq}  \right) \lambda \int_{\Omega}  f(x) |u|^q  dx\\
& = - \frac{(p-q)(2-q)}{2pq}  \lambda \int_{\Omega}  f(x) |u|^q  dx.
\end{align*}   

By Lemma \ref{Lemma - for u in N+, lambda int f >0}, we have $\int_{\Omega}  f(x) |u|^q  dx>0.$ Thus,

$$ I_{\lambda,p}(u) <  - \frac{(p-q)(2-q)}{2pq}  \lambda \int_{\Omega}  f(x) |u|^q  dx < 0, $$

which yields

$$ \M \le \M^+ <0.$$

2. Using H\"older  and Young's  inequality, we get that 

$$I_{\lambda,p}(u) \ge \frac{1}{2p} \left( (p-2) - \lambda (p-q) \right) \|u\|^2 - \lambda \left(\frac{p-q}{pq} \right)  \left(\frac{2-q}{2} \right) \left( A S_p^{-\frac{q}{2}} \right)^{\frac{2}{2-q}}.$$

Since $ 0< \lambda < \frac{p-2}{p-q},$ the functional  $I_{\lambda,p}$ is coercive and bounded below on $\Ne,$ and we have  

$$   I_{\lambda,p} (u) \ge  - \lambda \left(\frac{p-q}{pq} \right)  \left(\frac{2-q}{2} \right) \left( A S_p^{-\frac{q}{2}} \right)^{\frac{2}{2-q}}.$$

\end{proof}

\begin{lemma}\label{Lemma - existence of diffrentiable function sigma on Ne}
For each $u \in \Ne \setminus \{0\},$ there exist $\epsilon>0$ and a differentiable function $\sigma :  B(0,\epsilon) \subset \X \longrightarrow \R^+$ such that $\sigma(0)=1,$ $\sigma(v) (u-v) \in \Ne,$ and

\begin{equation} \label{derivative of sigma at 0}
\langle   \sigma'(0),v \rangle = \frac{2 c(n, \alpha) \langle  u , v \rangle_{\X}  - 2 \gamma \int_\Omega \frac{uv}{|x|^\alpha} dx - q \lambda\int_{\Omega}  f(x) |u|^{q-1}uv dx- p\int_{\Omega} g(x) \frac{|u|^{p-2}uv}{|x|^{s}}  dx       }{ (2-q) \left( c(n, \alpha) \int_{\R^n} \int_{\R^n} \frac{|u(x)- u(y)|^2}{|x-y|^{n+\alpha}}  dxdy -  \gamma \int_\Omega \frac{|u|^2}{|x|^\alpha} dx \right) - (p-q) \int_{\Omega} g(x) \frac{|u|^p}{|x|^{s}}  dx },
\end{equation}
 
for all $v \in\X.$ Here $B(0,\epsilon) := \{ u \in \X: \|u\| < \epsilon \}.$
\end{lemma}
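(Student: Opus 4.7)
The plan is to apply the implicit function theorem to the map $F:\R\times\X\to\R$ defined by $F(t,v):=\fy(t(u-v))$, whose zero set parametrizes $\Ne$ near $u$. Since $u\in\Ne$ we have $F(1,0)=\fy(u)=0$, and a direct computation combined with identity (\ref{derivative of functional phi for nehari}) gives
\begin{equation*}
\partial_tF(1,0)=\langle\fy'(u),u\rangle=(2-q)\|u\|^2-(p-q)\int_\Omega g(x)\frac{|u|^p}{|x|^s}\,dx.
\end{equation*}
For $\lambda\in(0,\Lambda_1)$, Lemma \ref{Lemma - Nehari0 is an emptyset } yields $\Ne^0=\emptyset$, so $u\in\Ne\setminus\{0\}$ forces $\partial_tF(1,0)\neq 0$. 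Continuous differentiability of $F$ on a neighborhood of $(1,0)$ follows from the smoothness of the norm-squared together with the $C^1$-smoothness on $\X$ of the functionals $w\mapsto\int_\Omega f|w|^q\,dx$ and $w\mapsto\int_\Omega g|w|^p/|x|^s\,dx$; these rely on the Hardy--Sobolev embedding (\ref{fractional H-S-M inequality}) and dominated convergence, using that $|w|^{q-2}w$ and $|w|^{p-2}w$ are continuous across zero since $p,q>1$.

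The implicit function theorem then delivers $\epsilon>0$ and a $C^1$ map $\sigma:B(0,\epsilon)\to\R$ with $\sigma(0)=1$ and $F(\sigma(v),v)=0$ throughout $B(0,\epsilon)$. Shrinking $\epsilon$ preserves $\sigma>0$ by continuity from $\sigma(0)=1$, and $F(\sigma(v),v)=0$ is exactly the statement $\sigma(v)(u-v)\in\Ne$, as required.

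For the explicit formula (\ref{derivative of sigma at 0}) I would differentiate the identity $F(\sigma(v),v)=0$ at $v=0$ in an arbitrary direction $v\in\X$ and solve the resulting linear equation
\begin{equation*}
\partial_tF(1,0)\,\langle\sigma'(0),v\rangle+\partial_vF(1,0)(v)=0.
\end{equation*}
Differentiating each piece of $\fy(u-\epsilon v)$ in $\epsilon$ at zero gives: from the quadratic part $-2c(n,\alpha)\langle u,v\rangle_{\X}+2\gamma\int_\Omega uv/|x|^\alpha\,dx$, from the concave term (using $\frac{d}{d\epsilon}|u-\epsilon v|^q\big|_{\epsilon=0}=-q|u|^{q-2}uv$) a contribution $+q\lambda\int_\Omega f|u|^{q-2}uv\,dx$, and analogously $+p\int_\Omega g|u|^{p-2}uv/|x|^s\,dx$ from the top-order term. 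Solving for $\langle\sigma'(0),v\rangle$ and recalling that $\partial_tF(1,0)=(2-q)(c(n,\alpha)\|u\|_{\X}^2-\gamma\int_\Omega u^2/|x|^\alpha\,dx)-(p-q)\int_\Omega g|u|^p/|x|^s\,dx$ reproduces (\ref{derivative of sigma at 0}) precisely.

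The main subtlety I anticipate is verifying the $C^1$-regularity on $\X$ of the concave integral $w\mapsto\int_\Omega f|w|^q\,dx$ near $u$, since for $1<q<2$ the integrand fails to be twice differentiable at zero; however its Gateaux derivative $|w|^{q-2}w$ is continuous and grows at most like $|w|^{q-1}$, so a standard dominated convergence argument paired with the continuous embedding $\X\hookrightarrow L^q(\Omega)$ coming from (\ref{fractional H-S-M inequality}) supplies the required Fr\'echet differentiability. Once this regularity is in place, the remainder of the argument is a direct application of the implicit function theorem and the chain-rule computation above.
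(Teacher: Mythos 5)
Your proposal is correct and follows essentially the same route as the paper: both apply the implicit function theorem to the map $(t,v)\mapsto\langle I_{\lambda,p}'(t(u-v)),t(u-v)\rangle=\fy(t(u-v))$, use that $u\in\Ne$ gives the zero at $(1,0)$, and use $\Ne^0=\emptyset$ (Lemma \ref{Lemma - Nehari0 is an emptyset }, valid for $\lambda\in(0,\Lambda_1)$) to see $\partial_t$ of this map at $(1,0)$ equals $(2-q)\|u\|^2-(p-q)\int_\Omega g|u|^p|x|^{-s}dx\neq 0$, then read off $\sigma'(0)$ from implicit differentiation. Your added verification of the $C^1$-regularity of the $q$- and $p$-integral terms is a detail the paper leaves implicit, not a deviation in method.
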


\begin{proof}
For $u \in \Ne,$ define $G:\R \times \X \longrightarrow \R$ as $$G(t,v) = \langle I_{\lambda,p}'(t(u-v)) , t(u-v) \rangle.$$ 

So, we have

\begin{align*}
G(t,v) &= t^2 c(n, \alpha) \int_{\R^n} \int_{\R^n} \frac{|u(x)- u(y)|^2}{|x-y|^{n+\alpha}}  dxdy -  t^2\gamma \int_\Omega \frac{|u|^2}{|x|^\alpha} dx \\
& - \lambda t^q \int_\Omega f(x) |u-v|^q dx -t^p \int_{\Omega} g(x) \frac{|u|^p}{|x|^{s}}  dx, 
\end{align*}

and  $$ G(1,0) = \langle I_{\lambda,p}'(u) , u \rangle. $$ 

By Lemma \ref{Lemma - Nehari0 is an emptyset }, we obtain $\frac{d}{dt} G(1,0) \neq 0,$ that is,

\begin{align*}
0 \neq \frac{d}{dt} G(1,0) &=  2  t c(n, \alpha) \int_{\R^n} \int_{\R^n} \frac{|u(x)- u(y)|^2}{|x-y|^{n+\alpha}}  dxdy - 2 t \gamma \int_\Omega \frac{|u|^2}{|x|^\alpha} dx\\
&- q \lambda t^{q-1}\int_{\Omega}  f(x) |u|^q dx - pt^{p-1}\int_{\Omega} g(x) \frac{|u|^p}{|x|^{s}} dx\biggr\rvert_{t=1}\\
&  =  (2-q) \|u\|^2 -(p-q) \int_{\Omega} g(x) \frac{|u|^p}{|x|^{s}} dx.
\end{align*}

According to the implicit function theorem, there exist $\epsilon>0$ and a differentiable function $\sigma : B(0,\epsilon) \longrightarrow \R$ such that $\sigma(0)=1,$ and

\begin{equation*} 
\langle   \sigma'(0),v \rangle = \frac{2 c(n, \alpha) \langle  u , v \rangle_{\X}  - 2 \gamma \int_\Omega \frac{uv}{|x|^\alpha} dx - q \lambda\int_{\Omega}  f(x) |u|^{q-1}uv dx- p\int_{\Omega} g(x) \frac{|u|^{p-2}uv}{|x|^{s}}  dx       }{ (2-q) \left( c(n, \alpha) \int_{\R^n} \int_{\R^n} \frac{|u(x)- u(y)|^2}{|x-y|^{n+\alpha}}  dxdy -  \gamma \int_\Omega \frac{|u|^2}{|x|^\alpha} dx \right) - (p-q) \int_{\Omega} g(x) \frac{|u|^p}{|x|^{s}}  dx }.
\end{equation*}

Moreover, we have $G(\sigma(v),v) =0,$ for all $v \in B(0,\epsilon),$ which implies that  
$\langle I_{\lambda,p}'(\sigma(v)(u-v)), \sigma(v)(u-v))  \rangle =0,$  that is,  $\sigma(v)(u-v) \in \Ne.$

\end{proof}

\begin{lemma}\label{Lemma - existence of diffrentiable function sigma on Ne^-}
For each $u \in \Ne^- \setminus \{0\},$ there exist $\epsilon>o$ and a differentiable function $\sigma_- :  B(0,\epsilon) \subset \X \longrightarrow \R^+$ such that $\sigma_-(0)=1,$ $\sigma_-(v) (u-v) \in \Ne^-,$ and

\begin{equation} \label{derivative of sigma_- at 0}
\langle   {\sigma_-}'(0),v \rangle = \frac{2 c(n, \alpha) \langle  u , v \rangle_{\X} - 2 \gamma \int_\Omega \frac{uv}{|x|^\alpha} dx - q \lambda\int_{\Omega}  f(x) |u|^{q-1}uv dx- p\int_{\Omega} g(x) \frac{|u|^{p-2}uv}{|x|^{s}}  dx       }{ (2-q) \left(c(n, \alpha) \int_{\R^n} \int_{\R^n} \frac{|u(x)- u(y)|^2}{|x-y|^{n+\alpha}}  dxdy -  \gamma \int_\Omega \frac{|u|^2}{|x|^\alpha} dx \right) - (p-q) \int_{\Omega} g(x) \frac{|u|^p}{|x|^{s}}  dx },
\end{equation}
 
for all $v \in\X.$ 
\end{lemma}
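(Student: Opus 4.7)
\textbf{Proof proposal for Lemma \ref{Lemma - existence of diffrentiable function sigma on Ne^-}.} The plan is to reuse the implicit function theorem argument from Lemma \ref{Lemma - existence of diffrentiable function sigma on Ne} and then promote the membership $\sigma(v)(u-v) \in \Ne$ to membership in the open subset $\Ne^-$ by a continuity argument. Concretely, for $u \in \Ne^- \setminus\{0\}$ one has by definition
$$\langle \fy'(u), u\rangle = (2-q)\|u\|^2 - (p-q)\int_{\Omega} g(x)\frac{|u|^p}{|x|^s}\,dx < 0,$$
which is exactly the quantity appearing as the denominator in the formula for $\langle\sigma'(0),v\rangle$. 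In particular it is nonzero, so $\Ne^- \subset \Ne\setminus\Ne^0$ and the hypotheses of Lemma \ref{Lemma - existence of diffrentiable function sigma on Ne} are met.

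First, I apply Lemma \ref{Lemma - existence of diffrentiable function sigma on Ne} to $u$, obtaining $\epsilon_1>0$ and a differentiable map $\sigma_-: B(0,\epsilon_1) \to \R^+$ with $\sigma_-(0)=1$, $\sigma_-(v)(u-v) \in \Ne$ for every $v \in B(0,\epsilon_1)$, and with the derivative $\langle\sigma_-'(0), v\rangle$ given exactly by the stated formula (which coincides with the formula in Lemma \ref{Lemma - existence of diffrentiable function sigma on Ne}).

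Next, I have to upgrade $\sigma_-(v)(u-v) \in \Ne$ to $\sigma_-(v)(u-v) \in \Ne^-$. For this, define
$$\Psi(v) := \langle \fy'(\sigma_-(v)(u-v)), \sigma_-(v)(u-v)\rangle = (2-q)\|\sigma_-(v)(u-v)\|^2 - (p-q)\int_\Omega g(x)\frac{|\sigma_-(v)(u-v)|^p}{|x|^s}\,dx.$$
Since $\sigma_-$ is continuous at $0$ with $\sigma_-(0)=1$, the map $v \mapsto \sigma_-(v)(u-v)$ is continuous from $B(0,\epsilon_1) \subset \X$ into $\X$, and both the norm term and the Hardy-Sobolev term depend continuously on its argument (the latter by the continuous embedding $\X \hookrightarrow L^p(\Omega,|x|^{-s})$ implied by $S_p<\infty$). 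Hence $\Psi$ is continuous on $B(0,\epsilon_1)$, and $\Psi(0) = \langle\fy'(u),u\rangle < 0$.

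By continuity there exists $0<\epsilon\le\epsilon_1$ such that $\Psi(v)<0$ for all $v\in B(0,\epsilon)$; restricting $\sigma_-$ to $B(0,\epsilon)$ gives exactly $\sigma_-(v)(u-v) \in \Ne^-$. All other conclusions (differentiability, the value $\sigma_-(0)=1$, and the formula for $\langle\sigma_-'(0),v\rangle$) are inherited from Lemma \ref{Lemma - existence of diffrentiable function sigma on Ne}. The only nontrivial step is the continuity promotion, and its only subtlety is verifying that the Hardy-Sobolev integral depends continuously on the $\X$-argument, which follows from the inequality (\ref{fractional H-S-M inequality}).
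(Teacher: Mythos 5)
Your proposal is correct and follows essentially the same route as the paper: invoke Lemma \ref{Lemma - existence of diffrentiable function sigma on Ne} (whose hypotheses hold since $\Ne^-\subset\Ne$) to get $\sigma_-$ with $\sigma_-(v)(u-v)\in\Ne$, then use continuity of $v\mapsto\langle\fy'(\sigma_-(v)(u-v)),\sigma_-(v)(u-v)\rangle$ together with its strict negativity at $v=0$ to shrink $\epsilon$ and land in $\Ne^-$. Your explicit justification of the continuity of the Hardy--Sobolev term via the embedding coming from (\ref{fractional H-S-M inequality}) is a welcome detail the paper leaves implicit.
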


\begin{proof}
Following the proof of Lemma \ref{Lemma - existence of diffrentiable function sigma on Ne}, we get that there exist $\epsilon>0$ and a differentiable function $\sigma_- : B(0,\epsilon) \longrightarrow \R$ such that $\sigma_-(0)=1$ and $\sigma_-(v)(u-v) \in \Ne, $  for all $v \in B(0,\epsilon).$ Since $u \in \Ne^-,$ we have 

$$ \langle \phi' (u),u  \rangle = (2-q) \| u\|^2  - (p-q) \int_{\Omega} g(x) \frac{|u|^{p}}{|x|^{s}} dx <0. $$

It then follows from the continuity of $\phi' $ and $\sigma_-$ that

$$ \langle \phi' (\sigma_-(v)(u-v)),\sigma_-(v)(u-v)  \rangle = (2-q) \| \sigma_-(v)(u-v)\|^2  - (p-q) \int_{\Omega} g(x) \frac{|\sigma_-(v)(u-v)|^{p}}{|x|^{s}} dx . $$

Therefore, for $\epsilon>0$  small enough, we get that $ \sigma_- (v)(u-v) \in \Ne^-.$
\end{proof}

\begin{proposition}\label{Proposition - existence of minimizing sequence, using Ekeland principle}

Let $\Lambda= \Lambda(p) := \min\{ \Lambda_1, \Lambda_2, \Lambda_3    \}.$ Then, for any $\lambda \in (0,\Lambda),$ the following hold.   

\begin{enumerate}
 \item There exists a minimizing sequence $(u_k)_{k \in \N} \subset \Ne$ for $I_{\lambda,p}(u)$ such that 
 
 \begin{itemize}
  \item $I_{\lambda,p}(u_k) = \M +o(1).$
  \item $I_{\lambda,p}'(u_k) = o(1) \quad \text{ in }  (\X)'.$
 \end{itemize} 
  \item There exists a minimizing sequence $(u_k)_{k \in \N} \subset \Ne^-$ for $I_{\lambda,p}(u)$ such that 
 
 \begin{itemize}
  \item $I_{\lambda,p}(u_k) = \M^- +o(1).$
  \item $I_{\lambda,p}'(u_k) = o(1) \qquad \text{ in }  (\X)'.$
 \end{itemize}

 \end{enumerate} 
\end{proposition}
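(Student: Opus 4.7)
The plan is to apply Ekeland's variational principle to $I_{\lambda,p}$ restricted to the complete set $\Ne$ (for part 1) and $\Ne^-$ (for part 2), and then to convert the resulting Ekeland perturbation inequality into a Palais--Smale condition by exploiting the differentiable reparametrizations $\sigma$ and $\sigma_-$ constructed in Lemmas \ref{Lemma - existence of diffrentiable function sigma on Ne} and \ref{Lemma - existence of diffrentiable function sigma on Ne^-}. Throughout, we work with $\lambda \in (0,\Lambda)$ so that all three auxiliary constants $\Lambda_1,\Lambda_2,\Lambda_3$ are available; in particular, $\Ne^0 = \emptyset$ by Lemma \ref{Lemma - Nehari0 is an emptyset }.

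For part 1, Lemma \ref{Lemma functional I is coercive and bounded below} gives that $I_{\lambda,p}$ is coercive and bounded below on $\Ne$, so Ekeland's principle yields $(u_k)\subset \Ne$ with $I_{\lambda,p}(u_k) \le \M + 1/k$ and $I_{\lambda,p}(w) \ge I_{\lambda,p}(u_k) - (1/k)\|w-u_k\|$ for every $w\in\Ne$. Coercivity bounds $\|u_k\|$, while $\M \le \M^+ < 0$ keeps $u_k$ bounded away from $0$. Now fix $v\in\X$ with $\|v\|=1$, let $t>0$ be small, and plug $w_{k,t} := \sigma_k(tv)(u_k - tv)\in\Ne$ (with $\sigma_k(0)=1$) into the Ekeland inequality. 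Expanding to first order in $t$ and sending $t\to 0^+$ produces
\[
|\langle I'_{\lambda,p}(u_k), v\rangle| \le \frac{C}{k}\bigl(1 + |\langle \sigma_k'(0), v\rangle|\bigr).
\]
Using formula (\ref{derivative of sigma at 0}), the numerator of $\langle \sigma_k'(0), v\rangle$ is controlled by a multiple of $\|u_k\|\cdot\|v\|$ (times constants depending on $\|f\|_\infty,\|g\|_\infty$), so the matter reduces to a \emph{uniform} lower bound on the denominator $|\langle \fy'(u_k),u_k\rangle| = |(2-q)\|u_k\|^2 - (p-q)\int_{\Omega} g(x)|u_k|^p |x|^{-s}\,dx|$.

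This uniform lower bound is the main obstacle. The argument is by contradiction: if along a subsequence $\langle \fy'(u_k),u_k\rangle \to 0$, then $(u_k)$ approaches $\Ne^0$ quantitatively, and the lower bound on $J_{\lambda,p}$ established in the proof of Lemma \ref{Lemma - Nehari0 is an emptyset } (where $J_{\lambda,p}>0$ on $\Ne^0$ for $\lambda < \Lambda_1$) forces $\|u_k\|\to 0$; combined with $I_{\lambda,p}(u_k)\to \M < 0$ this is a contradiction, yielding $|\langle \fy'(u_k),u_k\rangle| \ge c_0 > 0$. Substituting back gives $I'_{\lambda,p}(u_k) \to 0$ in $(\X)'$, completing part 1. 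For part 2, the argument is identical with $\Ne^-$ in place of $\Ne$ and $\sigma_-$ from Lemma \ref{Lemma - existence of diffrentiable function sigma on Ne^-} in place of $\sigma$; the key additional point is that $\Ne^-$ is relatively open in $\Ne\setminus\{0\}$, since the strict inequality $\langle \fy'(u),u\rangle < 0$ is preserved under small perturbations, so the deformed path $\sigma_-(tv)(u_k-tv)$ stays in $\Ne^-$ for $t$ small and the constrained Ekeland perturbation inequality applies verbatim.
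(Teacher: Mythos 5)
Your proposal follows essentially the same route as the paper: Ekeland's principle on $\Ne$ (resp.\ $\Ne^-$), the implicit-function reparametrizations $\sigma$, $\sigma_-$ of Lemmas \ref{Lemma - existence of diffrentiable function sigma on Ne} and \ref{Lemma - existence of diffrentiable function sigma on Ne^-}, and a contradiction argument via the functional $J_{\lambda,p}$ from Lemma \ref{Lemma - Nehari0 is an emptyset } to get the uniform lower bound on the denominator $|\langle \fy'(u_k),u_k\rangle|$. The only cosmetic difference is where the contradiction lands (you conclude $\|u_k\|\to 0$ against $\M<0$, while the paper contrasts $J_{\lambda,p}(u_k)=o(1)$ with the strictly positive lower bound), which is an equivalent reorganization of the same estimates.
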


\begin{proof}

 It follows from Lemma \ref{Lemma functional I is coercive and bounded below} that $I_{\lambda,p}(u)$ is coercive and bounded below. Then,\\

1. The Ekeland variational principle implies that there exists a minimizing sequence $(u_k)_{k \in \N}$ such that 
\begin{equation}\label{Ekeland principle -I_lambda < M + frac(1)- (k)  and I_lambda(u_k) < I_lambda(u) + frac(1)- (k) || u-u_k|| on Nehari}
I_{\lambda,p}(u_k) < \inf\limits_{\Ne} I_{\lambda,p}(u) + \frac{1}{k} = \M + \frac{1}{k} \ \text{ and } \ I_{\lambda,p}(u_k) < I_{\lambda,p}(u) + \frac{1}{k} \|u-u_k\| \quad \text{ for all } u \in \Ne.
\end{equation} 
 For $k$ large enough, we use Lemma \ref{Lemma functional I is coercive and bounded below}, (\ref{energy functional I_Lambda for nehari }) and (\ref{Ekeland principle -I_lambda < M + frac(1)- (k)  and I_lambda(u_k) < I_lambda(u) + frac(1)- (k) || u-u_k|| on Nehari}) to get 

\begin{equation} \label{I_lambda < M <0}
\begin{aligned}
 (\frac{1}{p}-\frac{1}{q} ) \ \lambda \int_{\Omega}  f(x) |u_k|^q  dx &\le I_{\lambda,p}(u_k) = (\frac{1}{2}  -\frac{1}{p}) \| u_k\|^2 +(\frac{1}{p}-\frac{1}{q} ) \ \lambda \int_{\Omega}  f(x) |u_k|^q  dx\\
 & < \M +\frac{1}{n}\\
 &<\M\\
  &<\frac{\M}{2}\\
 &<0.
\end{aligned}
\end{equation} 

Therefore,

\begin{equation*}
-\frac{\M}{2} \frac{pq}{p-q} < \lambda \int_{\Omega}  f(x) |u_k|^q  dx \le  A S_p^{-\frac{q}{2}} \|u_k\|^q,
\end{equation*}

which yields $u_k \neq 0,$ for all $k \in \N.$ On the other hand, from (\ref{I_lambda < M <0}) and H\"older's inequality, we deduce that 

$$ \| u_k\|^2  < \frac{2 \lambda (p-q)}{q(p-2)} A S_p^{-\frac{q}{2}} \|w_k\|^q.$$

Hence, 

\begin{equation}\label{Lower and Upper bound for ||u_k||}
\left(-\frac{\M}{2} \frac{pq}{p-q} A^{-1} S_p^{\frac{q}{2}}\right)^{\frac{1}{q}}< \| u_k\|  < \left(\frac{2 \lambda (p-q)}{q(p-2)} A S_p^{-\frac{q}{2}}\right)^{\frac{1}{2-q}}.
\end{equation}

In order to finalize the proof, it is sufficient to show that 

\begin{equation}\label{claim || I' || go to 0}
\|I_{\lambda,p}'(u_k)\|_{(\X)'} \to 0 \quad \text{ as } {k \to \infty}.
\end{equation}

Indeed,  it follows from Lemma \ref{Lemma - existence of diffrentiable function sigma on Ne} that there exists a differentiable function $\sigma_k : B_k(0,\epsilon_k) \longrightarrow \R_+,$ for some $\epsilon_k,$ such that 
$$ \sigma_k (u_k-u) \in \Ne \quad \text{ for all } k \in \N,$$
where $B_k(0,\epsilon_k) := \{ u \in \X: \|u\| < \epsilon_k \}.$ Choose $0<\rho <\epsilon_k,$ and for any $u \in \X \setminus \{ 0 \},$ define $u_\rho:= \frac{\rho u}{\|u\|}$ and $\eta_\rho:= \sigma_k(\rho_k)(u_k-u).$ 
Using the fact that $\eta_\rho \in \Ne,$ and also $(\ref{Ekeland principle -I_lambda < M + frac(1)- (k)  and I_lambda(u_k) < I_lambda(u) + frac(1)- (k) || u-u_k|| on Nehari})_2,$ we get that 
$I_{\lambda,p}(u_k) <  I_{\lambda,p}(\eta_\rho) + \frac{1}{k} \|\eta_\rho - u_k\|,$
which means 

\begin{equation}
I_{\lambda,p}(\eta_\rho) -I_{\lambda,p}(u_k)   > - \frac{1}{k} \|\eta_\rho - u_k\|.
\end{equation}

Now we apply the mean value theorem to the left hand-side of the last inequality to deduce 

$$I_{\lambda,p}(\eta_\rho) -I_{\lambda,p}(u_k)  = \langle I_{\lambda,p}'(u_k), \eta_\rho -u_k  \rangle + o(\|\eta_\rho -u_k\|).$$

Thus,

\begin{equation} \label{Applying mean value on ekeland results}
\langle I_{\lambda,p}'(u_k), \eta_\rho -u_k  \rangle + o(\|\eta_k -u_k\|) \ge-\frac{1}{k} \|\eta_\rho - u_k\|.
\end{equation}

Regarding the first term in (\ref{Applying mean value on ekeland results}), we have that
\begin{equation*}
\begin{aligned}
\langle I_{\lambda,p}'(u_k), \eta_\rho -u_k  \rangle& = \langle I_{\lambda,p}'(u_k), \sigma_k(u_\rho)(u_k-u_\rho) - u_k \rangle\\
& = \langle I_{\lambda,p}'(u_k), \sigma_k(u_\rho)(u_k-u_\rho) +(u_\rho- u_k) - u_\rho \rangle\\
& =\langle I_{\lambda,p}'(u_k), - u_\rho \rangle+\langle I_{\lambda,p}'(u_k), (\sigma_k(u_\rho)-1)(u_k-u_\rho)  \rangle.
\end{aligned}
\end{equation*}

Therefore,

\begin{equation}
\langle I_{\lambda,p}'(u_k), - u_\rho \rangle+\langle I_{\lambda,p}'(u_k), (\sigma_k(u_\rho)-1)(u_k-u_\rho)  \rangle \ge-\frac{1}{k} \|\eta_\rho - u_k\|.
\end{equation}
 
By the definition of $u_\rho$ and $\eta_\rho,$ we obtain
\begin{equation*}
- \rho \langle I_{\lambda,p}'(u_k), \frac{u}{\|u\|} \rangle +(\sigma_k(u_\rho)-1) \langle I_{\lambda,p}'(u_k)-I_{\lambda,p}'(\eta_\rho), (u_k - u_\rho) \rangle \ge-\frac{1}{k} \|\eta_\rho - u_k\|+ o(\|\eta_\rho - u_k\|).
\end{equation*} 

The last inequality implies that 
\begin{equation}\label{estimate for < I',u/||u|| >}
 \langle I_{\lambda,p}'(u_k), \frac{u}{\|u\|}\rangle  \le \frac{1}{\rho k} \|\eta_\rho - u_k\|+\frac{o(\|\eta_\rho - u_k\|)}{\rho} + \frac{(\sigma_k(u_\rho)-1)}{\rho} \langle I_{\lambda,p}'(u_k)-I_{\lambda,p}'(\eta_\rho), (u_k - u_\rho) \rangle .
\end{equation} 

Note that from Lemma \ref{Lemma - existence of diffrentiable function sigma on Ne}, it follows 

$$\lim\limits_{\rho \to 0} \frac{|(\sigma_k(u_\rho)-1)|}{\rho} = \frac{| \langle \sigma'(0) , u_\rho \rangle |}{\rho} \le \| \sigma'(0) \|,$$

and also simple computations yield  

\begin{align*}
\| \eta_\rho  - u_k \| &= \|\sigma_k(u_\rho) (u_k - u_\rho) u_k\|\\
&= \|(\sigma_k(u_\rho) -1) u_k - u_\rho \sigma_k(u_\rho)\|\\
& \le \|(\sigma_k(u_\rho) -1) u_k \| +\| u_\rho \sigma_k(u_\rho)\|\\
&= |\sigma_k(u_\rho) -1| \|u_k\| + |\sigma_k(u_\rho)| \rho.
\end{align*}

Using the last two identities, and (\ref{estimate for < I',u/||u|| >}), we then get that

\begin{equation*}
 \langle I_{\lambda,p}'(u_k), \frac{u}{\|u\|}\rangle  \le \frac{1}{k} |\sigma_k (u_\rho)|+\frac{1}{k} \frac{|\sigma_k (u_\rho) -1|}{\rho} \|u_k\| + \frac{(\sigma_k(u_\rho)-1)}{\rho} \langle I_{\lambda,p}'(u_k)-I_{\lambda,p}'(\eta_\rho), (u_k - u_\rho) \rangle .
\end{equation*} 

Taking $\sigma \to 0$ in the last inequality for a fixed $k$, and using (\ref{Lower and Upper bound for ||u_k||}), we obtain that there exists a constant $C>0$ (independent of $\rho$) such that 

$$ \langle I_{\lambda,p}'(u_k), \frac{u}{\|u\|}\rangle  \le \frac{C}{k}(1+ \|\sigma'(0)\|) \quad \text{ as } \sigma \to 0.    $$

In order to complete the proof of (\ref{claim || I' || go to 0}), we only need to show that $\|\sigma'(0)\|$ is uniformly bounded in $k.$ It follows from (\ref{derivative of sigma at 0}) and H\"older's inequality that there exists a constant $c>0$ such that 

$$ \langle \sigma'(0),v   \rangle \le \frac{c}{\left| (2-q) \|u_k\|^2  - (p-q) \int_{\Omega} g(x) \frac{|u_k|^p}{|x|^{s}}  dx \right|}.  $$

It remains to prove that there exists a constant $\bar{c}>0$ such that 

\begin{equation}\label{The denominator of sigma's bound >0}
\left| (2-q) \|u_k\|^2 - (p-q) \int_{\Omega} g(x) \frac{|u_k|^p}{|x|^{s}} dx \right| > \bar{c} \quad \text{for } n \text{ large enough}.
\end{equation}
We deduce by contradiction. Suppose that there exists a sub-sequence $(u_k)_{k \in \N}$ such that 

\begin{equation}\label{The denominator of sigma's bound = o(1)[contradiction] }
(2-q) \|u_k\|^2 - (p-q) \int_{\Omega} g(x) \frac{|u_k|^p}{|x|^{s}} dx =o(1) \quad \text{ as } k \to \infty.
\end{equation} 
Then, (\ref{Lower and Upper bound for ||u_k||}) and (\ref{The denominator of sigma's bound = o(1)[contradiction] }) yield

\begin{align*}
(p-q) \int_{\Omega} g(x) \frac{|u_k|^p}{|x|^{s}} dx &= (2-q) \|u_k\|^2 +o(1)\\
& \ge (2-q) \left(-\frac{\M}{2} \frac{pq}{p-q} A^{-1} S_p^{\frac{q}{2}}\right)^{\frac{1}{q}} +o(1) \quad \text{ as } k \to \infty,
\end{align*}

which implies that there exists a constant $C_1>0$ such that 

\begin{equation}\label{Hardy-Sobolev term >=C1>0}
\int_{\Omega} g(x) \frac{|u_k|^p}{|x|^{s}} dx \ge C_1>0.
\end{equation}

In addition, by (\ref{The denominator of sigma's bound = o(1)[contradiction] }) and the fact that $(u_k)_{k \in \N} \in \Ne,$ we have 

\begin{align*}
\lambda  \int_{\Omega} f(x) |u_k|^q dx &= \|u_k\|^2 - \int_{\Omega} g(x) \frac{|u_k|^p}{|x|^{s}} dx \\
& = \|u_k\|^2 - \frac{2-q}{p-q} \|u_k\|^2 +o(1)\\
& =  \frac{p-2}{p-q} \|u_k\|^2+o(1) \quad \text{ as } k \to \infty.
\end{align*}

Hence,

\begin{align*}
\|u_k\| &= \left(\lambda (\frac{p-q}{p-2}) \int_{\Omega} f(x) |u_k|^q dx\right)^{\frac{1}{2}} + o(1)\\
& \le \left(\lambda (\frac{p-q}{p-2}) A S_p^{-\frac{q}{2}} \right)^{\frac{1}{2-q}} +o(1) \quad \text{ as } k \to \infty.
\end{align*}

Following the last part of the proof of Lemma \ref{Lemma - Nehari0 is an emptyset }, we get that $J_{\lambda,p}(u_k) = o(1)$ as $k \to \infty.$ On the other hand, we use (\ref{Hardy-Sobolev term >=C1>0})  and  the fact that $\lambda \in (0, \Lambda)$ to get    

\begin{equation}
\begin{aligned}
J_{\lambda,p}(u_k) &\ge C(p,q) \left(       \frac{\|u_k\|^{2(p-1)}}{\int_{\Omega} g(x) \frac{|u_k|^{p}}{|x|^{s}} dx} \right)^{\frac{1}{p-2}} - \lambda A \left(\int_{\Omega}  \frac{|u_k|^{p}}{|x|^{s}} dx \right)^{\frac{q}{p}}\\
& \ge  \left(\int_{\Omega}  \frac{|u_k|^{p}}{|x|^{s}} dx \right)^{\frac{q}{p}}  \left[ C(p,q) S_p^{\frac{p-1}{p-2}+ \frac{q-1}{2-q}} \left( \frac{\lambda A (p-q)}{p-2}\right)^{\frac{1-q}{2-q}}    \|g\|_\infty^{\frac{1}{2-p}}    - \lambda A\right].\\
&>0,
\end{aligned}
\end{equation}
which contradicts $J_{\lambda,p}(u_k) =o(1)$ as $k \to \infty.$ Therefore, (\ref{The denominator of sigma's bound >0}) holds, and there exists a constant $b>0$ such that 
$$ \langle I_{\lambda,p}'(u_k), \frac{u}{\|u\|}\rangle  \le \frac{b}{k}.$$
This  implies (\ref{claim || I' || go to 0}), and completes the proof.\\

2. The proof goes exactly as the first part using Lemma \ref{Lemma - existence of diffrentiable function sigma on Ne^-}.

\end{proof}

\section{Proof of Theorem \ref{Theorem - Sub-Critical} }

In this section, we use the results in section 3 to prove the existence of a positive solution on $\Ne^+,$ as well as on $\Ne^+.$ This coupled with the fact that $\Ne^- \cap \Ne^+ = \emptyset $ yield Theorem \ref{Theorem - Sub-Critical}.

\begin{theorem}\label{Theorem - existence of the solution in N+}
Let $\Lambda= \Lambda(p) := \min\{ \Lambda_1, \Lambda_2, \Lambda_3    \}.$ Then,  for any $\lambda \in (0,\Lambda),$  there exists a minimizer $u_+   \in \Ne^+$ for the functional $I_{\lambda,p}$ which verifies
\begin{enumerate}
\item $I_{\lambda,p} (u_+) = \M = \M^+.$
\item $u_+ $ is positive solution of (\ref{Main problem}).
\item $I_{\lambda,p}(u_+) \to 0 \quad \text{ as } \lambda \to 0.$ 
\end{enumerate}

\end{theorem}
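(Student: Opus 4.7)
The plan is to apply Proposition \ref{Proposition - existence of minimizing sequence, using Ekeland principle}(1) to produce a Palais--Smale sequence $(u_k) \subset \Ne$ at level $\M$, extract a strong limit $u_+$ using sub-critical compactness, and then verify in turn that $u_+ \in \Ne^+$, that $u_+$ may be chosen non-negative, and that $u_+$ is strictly positive; assertion (3) will fall out of an already-derived lower bound as $\lambda \to 0^+$.

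By Lemma \ref{Lemma functional I is coercive and bounded below}(2) the sequence $(u_k)$ is bounded in $\X$, so up to a subsequence $u_k \rightharpoonup u_+$ weakly. Because $p < 2_\alpha^*(s)$, both $\X \hookrightarrow L^p(\Omega, |x|^{-s}\,dx)$ and $\X \hookrightarrow L^q(\Omega)$ are compact, whence the weighted $L^p$ and $L^q$ integrals pass to the limit. Passing to the limit in $\langle I'_{\lambda,p}(u_k),\varphi\rangle \to 0$ for $\varphi \in C^\infty_0(\Omega)$ gives $I'_{\lambda,p}(u_+) = 0$; in particular $u_+$ satisfies the Nehari identity $\|u_+\|^2 = \lambda \int_\Omega f|u_+|^q\,dx + \int_\Omega g|u_+|^p/|x|^s\,dx$. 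Combined with the same identity for $u_k$ (valid because $u_k \in \Ne$), this forces $\|u_k\| \to \|u_+\|$, hence $u_k \to u_+$ strongly in the Hilbert space $\X$. Consequently $u_+ \in \Ne$ and $I_{\lambda,p}(u_+) = \M$.

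Since $\M \le \M^+ < 0$ by Lemma \ref{Lemma functional I is coercive and bounded below}(1), $u_+$ is non-trivial, and by Lemma \ref{Lemma - Nehari0 is an emptyset } it belongs to $\Ne^+ \cup \Ne^-$. If $\int_\Omega f|u_+|^q\,dx \le 0$, the second form of (\ref{energy functional I_Lambda for nehari }) gives $I_{\lambda,p}(u_+) \ge (\tfrac12 - \tfrac1p)\|u_+\|^2 > 0$, contradicting $\M<0$. Hence $\int_\Omega f|u_+|^q\,dx > 0$, and Lemma \ref{Lemma - existence of t+ and t-} produces $0<t^+(u_+)<t^-(u_+)$ with $t^\pm u_+ \in \Ne^\pm$. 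If $u_+$ were to lie in $\Ne^-$, the uniqueness built into that lemma would force $t^-(u_+)=1$, and then $t^+(u_+)u_+ \in \Ne^+$ with $I_{\lambda,p}(t^+(u_+)u_+) < I_{\lambda,p}(u_+) = \M \le \M^+$ would contradict the definition of $\M^+$. Thus $u_+ \in \Ne^+$ and $\M = \M^+$.

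Positivity follows by a standard procedure: the pointwise inequality $\bigl||u_+(x)|-|u_+(y)|\bigr| \le |u_+(x)-u_+(y)|$ yields $\bigl\| |u_+| \bigr\| \le \|u_+\|$ and $I_{\lambda,p}(|u_+|) \le I_{\lambda,p}(u_+) = \M^+$, so a Nehari projection of $|u_+|$ onto $\Ne^+$ via Lemma \ref{Lemma - existence of t+ and t-} produces a non-negative minimizer whose energy does not exceed $\M^+$; the strong maximum principle for the Hardy--fractional operator $(-\Delta)^{\alpha/2} - \gamma|x|^{-\alpha}$ (valid for $\gamma<\gamma_H(\alpha)$) then upgrades $u_+ \ge 0$ to $u_+ > 0$ in $\Omega$. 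Finally, the bound $I_{\lambda,p}(u) \ge -\lambda \tfrac{(p-q)(2-q)}{2pq}(AS_p^{-q/2})^{2/(2-q)}$ on $\Ne$ extracted from the proof of Lemma \ref{Lemma functional I is coercive and bounded below}(2), combined with $\M<0$, forces $\M \to 0$ as $\lambda \to 0^+$, so $I_{\lambda,p}(u_+)=\M\to 0$. The main obstacle is the positivity step, where a delicate fibre-map analysis for $t \mapsto I_{\lambda,p}(t|u_+|)$ is required to certify that the projection of $|u_+|$ onto $\Ne^+$ genuinely attains $\M^+$; the strong-compactness and Nehari-decomposition steps are routine consequences of the lemmata of Section 3.
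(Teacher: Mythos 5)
Your proposal follows essentially the same route as the paper: the Ekeland-type minimizing sequence from Proposition \ref{Proposition - existence of minimizing sequence, using Ekeland principle}, sub-critical compactness to obtain a strong limit realizing $\M$, the fibering-map/uniqueness argument of Lemma \ref{Lemma - existence of t+ and t-} to exclude $u_+ \in \Ne^-$, the absolute-value-plus-maximum-principle step for positivity, and the coercivity lower bound of Lemma \ref{Lemma functional I is coercive and bounded below} for item (3). Your strong-convergence step (norm convergence via the compact weighted embeddings and the Nehari identity) and your use of $I_{\lambda,p}(|u_+|)\le I_{\lambda,p}(u_+)$ instead of the paper's claimed equality are, if anything, slightly more careful than the corresponding passages in the paper, so the argument is correct and structurally the same.
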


\begin{proof}
Let $(u_k)_{k \in \N} \subset \Ne$ be a minimizing sequence for $I_{\lambda,p}$ such that $I_{\lambda,p}(u) = \M +o(1)$ and $I_{\lambda,p}'(u) = o(1)\text{ in }  (\X)',$ given in the first part of  Proposition \ref{Proposition - existence of minimizing sequence, using Ekeland principle}. It then follows from Lemma \ref{Lemma functional I is coercive and bounded below} and the fractional Sobolev embedding that there exists a sub-sequence $(u_k)_{k \in \N}$ - still denote by $u_k$ - and $u_+ \in \X$ such that

\begin{equation}\label{Weakly and strongly convergence of minimizing sequence}
\begin{aligned}
&u_k \rightharpoonup u_+ \text{ weakly in } \X \\
& u_k \to u_+ \text{ strongly  in } L^r(\Omega) \quad \text{ for every }1\le r <2^*_\alpha. 
\end{aligned}
\end{equation}

We first show that $ \int_{\Omega} f(x) |u_+|^q dx  \neq 0.$ Indeed, suppose $ \int_{\Omega} f(x) |u_+|^q dx  = 0.$ Then, by $(\ref{Weakly and strongly convergence of minimizing sequence})_2$, and the fact that $1<q<2<2^*_\alpha,$ we obtain

$$  \int_{\Omega} f(x) |u_k|^q dx   \to  \int_{\Omega} f(x) |u_+|^q dx  = 0  \quad \text{ as } k \to \infty,$$

which means

$$  \int_{\Omega} f(x) |u_k|^q dx=o(1) \quad \text{ as } k \to \infty. $$
Thus,

\begin{align*}
 \|u_k\|^2 &= \lambda  \int_{\Omega} f(x) |u_k|^q dx + \int_{\Omega} g(x) \frac{|u_k|^p}{|x|^{s}} dx \\
& =\int_{\Omega} g(x) \frac{|u_k|^p}{|x|^{s}} dx+o(1) \quad  \text{ as } k \to \infty,
\end{align*}

and

\begin{align*}
I_{\lambda,p}(u_k)&= \frac{1}{2} \| u_k\|^2  -\frac{\lambda}{q} \int_{\Omega}  f(x) |u_k|^q dx -\frac{1}{p}\int_{\Omega} g(x) \frac{|u_k|^{p}}{|x|^{s}} dx\\
&=\left( \frac{1}{2} - \frac{1}{p}\right) \int_{\Omega} g(x) \frac{|u_k|^{p}}{|x|^{s}} dx +o(1) \quad \text{ as } k \to \infty.
\end{align*}

On the other hand,
 
$$I_{\lambda,p}(u) = \M +o(1)<0 \quad \text{ as } k \to \infty.   $$

This leads us to the following contradiction:

\begin{align*}
0 \le \left( \frac{p-2}{2p} \right) \| u_k\|^2 +o(1) = I_{\lambda,p}(u_k) =\M =o(1) < 0.
\end{align*}

Hence,  $$ \int_{\Omega} f(x) |u_+|^q dx  \neq 0.$$ We now prove that $u_k \to u_+$ strongly  in $\X.$

\begin{align*}
\M = \inf\limits_{u \in \X \setminus \{0\}} I_{\lambda,p}(u) &\le I_{\lambda,p}(u_+) = \frac{1}{2} \| u_+\|^2  -\frac{\lambda}{q} \int_{\Omega}  f(x) |u_+|^q dx-\frac{1}{p}\int_{\Omega} g(x) \frac{|u_+|^{p}}{|x|^{s}} dx\\
&=\left( \frac{1}{2} - \frac{1}{p}\right) \int_{\Omega} g(x) \frac{|u_+|^{p}}{|x|^{s}} dx + \left( \frac{1}{p} - \frac{1}{q}\right) \lambda \int_{\Omega}  f(x) |u_+|^q dx \\
&\le \left( \frac{1}{2} - \frac{1}{p}\right) \int_{\Omega} g(x) \frac{|u_k|^{p}}{|x|^{s}} dx + \left( \frac{1}{p} - \frac{1}{q}\right) \lambda \int_{\Omega}  f(x) |u_k|^q dx \\
&= \M.
\end{align*}

This yields $ I_{\lambda,p}(u_+) = \M,$ and  $u_k \to u_+$ strongly  in $\X.$

The next step is to prove that $u_+ \in \Ne^+.$ Assume that $u_+ \in \Ne^-.$ It then follows from Lemma \ref{Lemma - existence of t+ and t-} that there exist $t^-$ and $t^+$ such that $t^- u_+ \in \Ne^-,$ $t^+ u_+ \in N^+$ and $t^+ < t^- =1.$ Following the proof of Lemma \ref{Lemma - existence of t+ and t-}, we have that 
$\frac{d}{dt} I_{\lambda,p}(t^+u) =0  \text{ and }  \frac{d^2}{dt^2} I_{\lambda,p}(t^+u) > 0.$ Thus, there exists a $\tilde{t}$ such that $t^+ < \tilde{t} <t^-=1 $ and  $I_{\lambda,p}(t^+ u_+) < I_{\lambda,p}(\tilde{t} u_+). $ We again use Lemma \ref{Lemma - existence of t+ and t-} to get 

$$I_{\lambda,p}(t^+ u_+) < I_{\lambda,p}(\tilde{t} u_+) \le I_{\lambda,p}(t^- u_+) = I_{\lambda,p}(u_+),$$

which is in contradiction with  $I_{\lambda,p}(u_+)=\M.$ Therefore, $u_+ \in \Ne^+,$ and $I_{\lambda,p} (u_+) = \M = \M^+.$
Since  $I_{\lambda,p} (u_+)=I_{\lambda,p} (|u_+|),$ and $|u_+| \in \Ne^+$ is a solution for (\ref{Main problem}), without loss of generality, we may assume  that $u_+$ is a non-negative solution of (\ref{Main problem}), and strong maximum principle \cite[Proposition 2.2.8]{Silvestre} implies that $u_+ >0 $ in $\Omega.$
 
To complete the proof of  Theorem \ref{Theorem - existence of the solution in N+}, we need to show that $I_{\lambda,p}(u_+) \to 0 \text{ as } \lambda \to 0.$ From Lemma \ref{Lemma functional I is coercive and bounded below}, it follows 

$$ - \lambda \left(\frac{p-q}{pq} \right)  \left(\frac{2-q}{2} \right) \left( A S_p^{-\frac{q}{2}} \right)^{\frac{2}{2-q}} \le I_{\lambda,p}(u_+) < \M < 0.$$ 

Thus, $I_{\lambda,p}(u_+) \to 0 \text{ as } \lambda \to 0.$ 
\end{proof}

\begin{theorem}\label{Theorem - existence of the solution in N-}
Let $\Lambda= \Lambda(p) := \min\{ \Lambda_1, \Lambda_2, \Lambda_3    \}.$ Then, for any $\lambda \in (0,\Lambda),$ the functional $I_{\lambda,p}$ has a minimizer $u_- \in \Ne^-$  which verifies
\begin{enumerate}
\item $I_{\lambda,p} (u_-) = \M^-.$
\item $u_- $ is a positive solution of (\ref{Main problem}).
\end{enumerate}

\end{theorem}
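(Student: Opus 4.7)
The plan is to mirror the argument of Theorem \ref{Theorem - existence of the solution in N+}, but starting from the minimizing sequence on $\Ne^-$ produced by the second part of Proposition \ref{Proposition - existence of minimizing sequence, using Ekeland principle}. First I would fix $\lambda\in(0,\Lambda)$ and let $(u_k)\subset\Ne^-$ be such a Palais--Smale sequence with $I_{\lambda,p}(u_k)\to\M^-$ and $I'_{\lambda,p}(u_k)\to 0$ in $(\X)'$. Lemma \ref{Lemma functional I is coercive and bounded below} gives that $(u_k)$ is bounded in $\X$, so after extraction I would assume $u_k\rightharpoonup u_-$ weakly in $\X$, and (crucially, because $p<2_\alpha^*(s)$ is strictly sub-critical and $q<2<2_\alpha^*$) also $u_k\to u_-$ strongly in $L^q(\Omega)$ and in the weighted space $L^p(\Omega,|x|^{-s})$.

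Second, I would show that $u_-\neq 0$ and that $u_-\in\Ne^-$. For any $u\in\Ne^-$, the inequality $(2-q)\|u\|^2<(p-q)\int_\Omega g(x)|u|^p/|x|^s\,dx\le(p-q)\|g\|_\infty S_p^{-p/2}\|u\|^p$ forces a uniform lower bound $\|u\|\ge c_0>0$ on $\Ne^-$. Combined with strong convergence in $L^p(\Omega,|x|^{-s})$, this yields $\int_\Omega g(x)|u_-|^p/|x|^s\,dx>0$, hence $u_-\neq 0$. Passing to the weak limit in $\langle I'_{\lambda,p}(u_k),\varphi\rangle=o(1)$ shows $I'_{\lambda,p}(u_-)=0$, so $u_-\in\Ne$. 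To place $u_-$ in $\Ne^-$, I pass to the limit in $\langle\fy'(u_k),u_k\rangle<0$ using weak lower semicontinuity of $\|\cdot\|^2$ together with strong convergence of the $L^p(\Omega,|x|^{-s})$-term, which yields $\langle\fy'(u_-),u_-\rangle\le 0$; since Lemma \ref{Lemma - Nehari0 is an emptyset } rules out $\Ne^0$, I conclude $u_-\in\Ne^-$.

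Third, I would upgrade weak convergence to strong convergence in $\X$. Using $\langle I'_{\lambda,p}(u_k),u_k\rangle=o(1)$ and $u_-\in\Ne$, together with strong convergence of the lower-order terms, I obtain
\[
\|u_k\|^2=\lambda\int_\Omega f(x)|u_k|^q\,dx+\int_\Omega g(x)\frac{|u_k|^p}{|x|^s}\,dx+o(1)\;\longrightarrow\;\lambda\int_\Omega f(x)|u_-|^q\,dx+\int_\Omega g(x)\frac{|u_-|^p}{|x|^s}\,dx=\|u_-\|^2.
\]
Convergence of norms plus weak convergence in the Hilbert space $\X$ (equipped with the norm $\|\cdot\|$) yields $u_k\to u_-$ strongly. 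Hence $I_{\lambda,p}(u_-)=\lim I_{\lambda,p}(u_k)=\M^-$. Alternatively, one can argue by Lemma \ref{Lemma - existence of t+ and t-}: if $I_{\lambda,p}(u_-)>\M^-$, then taking $t^-(u_-)=1$, the value $I_{\lambda,p}(t^-u_-)=\max_{t\ge t_{\max}}I_{\lambda,p}(tu_-)$ would have to equal $\M^-$, contradicting the strict inequality obtained from weak lower semicontinuity.

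Finally, since $I_{\lambda,p}(|u_-|)=I_{\lambda,p}(u_-)$ and $|u_-|\in\Ne^-$ as well, I may replace $u_-$ by $|u_-|$ and assume $u_-\ge0$. As a non-trivial, non-negative weak solution of (\ref{Main problem}), the strong maximum principle (\cite[Proposition 2.2.8]{Silvestre}) then gives $u_->0$ in $\Omega$. The main technical obstacle is the strong convergence step; this is where the strict sub-criticality $p<2_\alpha^*(s)$ is essential (compactness of $\X\hookrightarrow L^p(\Omega,|x|^{-s})$), and is precisely the point that has to be handled by a different (concentration-compactness) argument in the critical case of Theorem \ref{Theorem - Critical}.
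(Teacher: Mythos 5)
Your proposal is correct and follows essentially the same route as the paper: the Ekeland--Palais--Smale sequence on $\Ne^-$ from Proposition \ref{Proposition - existence of minimizing sequence, using Ekeland principle}, boundedness from Lemma \ref{Lemma functional I is coercive and bounded below}, compactness of the sub-critical embeddings to upgrade weak to strong convergence in $\X$ (the paper phrases this as a contradiction with $\|u_-\|<\liminf\|u_k\|$, you argue directly via convergence of norms through the Nehari identity, which is the same mechanism), and then passage to $|u_-|$ with the maximum principle. Your additional verifications that $u_-\neq 0$ and $u_-\in\Ne^-$ are details the paper leaves implicit, not a departure from its argument.
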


\begin{proof}

Let $(u_k)_{k \in \N} \subset \Ne^-$ be a minimizing sequence for $I_{\lambda,p}$ such that $I_{\lambda,p}(u) = \M^- +o(1)$ and $I_{\lambda,p}'(u) = o(1)\text{ in }  (\X)',$ given in the second part of  Proposition \ref{Proposition - existence of minimizing sequence, using Ekeland principle}. It then follows from Lemma \ref{Lemma functional I is coercive and bounded below} and the fractional Sobolev embedding that there exists a sub-sequence $(u_k)_{k \in \N}$ - still denote by $u_k$ - and $u_- \in \X$ such that

\begin{equation}\label{Weakly and strongly convergence of minimizing sequence Ne -}
\begin{aligned}
&u_k \rightharpoonup u_- \text{ weakly in } \X \\
& u_k \to u_- \text{ strongly  in } L^r(\Omega) \quad \text{ for every }1\le r <2^*_\alpha. 
\end{aligned}
\end{equation}

We prove that $u_k \to u_-$ in $\X.$ Indeed, if not, then we have $\|u_-\| < \liminf\limits_{k \to \infty} \|u_k\|.$ Therefore,

\begin{align*}
\langle I_{\lambda,p}'(u_-), u_- \rangle &=\| u_-\|^2  - \lambda \int_{\Omega}  f(x) |u_-|^q dx -\int_{\Omega} g(x) \frac{|u_-|^{p}}{|x|^{s}} dx\\
& < \liminf\limits_{k \to \infty}
 \left(   \| u_k\|^2  - \lambda \int_{\Omega}  f(x) |u_k|^q dx -\int_{\Omega} g(x) \frac{|u_k|^{p}}{|x|^{s}} dx  \right )\\
& =0,
 \end{align*}

which contradicts $u_- \in \Ne^-.$ This implies that $u_k \to u_-$ in $\X,$ and therefore $I_{\lambda,p} (u_-) = \M^-.$ Since $I_{\lambda,p} (u_-) = I_{\lambda,p} (|u_-|),$ and  $|u_-| \in \Ne^-$ is a solution for (\ref{Main problem}), without loss of generality, we may assume  that $u_-$ is a non-negative solution of (\ref{Main problem}), and the maximum principle \cite[Proposition 2.2.8]{Silvestre} implies that $u_- >0$ in $\Omega.$

\end{proof}

\begin{proof}[ Proof of Theorem \ref{Theorem - Sub-Critical}]

It follows from Theorems \ref{Theorem - existence of the solution in N+} and \ref{Theorem - existence of the solution in N-}
that there exist two positive solutions $u_+$ and $u_-$ such that $u_+ \in \Ne^+$  and $u_- \in \Ne^-.$ In addition, by Lemma \ref{Lemma - Nehari0 is an emptyset }, $\Ne^+ \cap \Ne^- = \emptyset.$ Thus,  $u_+$ and $u_-$ are two distinct positive solutions for (\ref{Main problem}).

\end{proof}

\section{Proof of Theorem \ref{Theorem - Critical}}

Throughout this section, we shall assume that
\begin{align}\label{Condition-Critical case}
p =2_{\alpha}^*(s) \quad  \text{ and } \quad  g(x) \equiv 1.
\end{align}

We also use the following notations for simplicity: 

$$\Lambda^*:= \Lambda(2^*_\alpha(s)) \quad  \text{ and } \quad   \I(u) := I_{\lambda, 2^*_\alpha(s)}(u).$$

We point out that all results (i.e., Lemmas, Propositions and Theorems) stated in the previous sections hold under condition $(\ref{Condition-Critical case}).$

The first step to prove Theorem \ref{Theorem - Critical} is to study the existence and asymptotic behavior of the weak solutions to the following borderline problem associated with the fractional Hardy-Schr\"odinger operator $ ({-}{ \Delta})^{\frac{\alpha}{2}}-  \frac{\gamma}{|x|^{\alpha}}$ on $\R^n:$
 \begin{equation}\label{Limiting problem}
\left\{\begin{array}{rl}
({-}{ \Delta})^{\frac{\alpha}{2}}u- \gamma \frac{u}{|x|^{\alpha}}= {\frac{u^{2_{\alpha}^*(s)-1}}{|x|^s}} & \text{in }  {\R^n}\\
 u>0 \,\,\,\,\,\,\,\,\,\,\,\,\,\,\,\,\,  & \text{in }  \mathbb{R}^n,
\end{array}\right.
\end{equation}

where $0<\alpha<2$, $ 0 \le s < \alpha$,  $ {2_{\alpha}^*(s)}={\frac{2(n-s)}{n-{\alpha}}}$,   
$ 0 \le \gamma < \gamma_H(\alpha)=2^\alpha \frac{\Gamma^2(\frac{n+\alpha}{4})}{\Gamma^2(\frac{n-\alpha}{4})}.$

The existence of the weak solutions to (\ref{Limiting problem}) was proved in \cite{Ghoussoub-Shaya}. Recently, the author and et al. in \cite{Ghoussoub-Robert-Zhao-Shaya} have proved the following results regarding the asymptotic behavior of such solutions which play a crucial role in this section:

\begin{theorem}[Theorem 1.2  in \cite{Ghoussoub-Robert-Zhao-Shaya}]\label{Theorem-Upper and lower bound for the solution on Rn}
Assume $0 \le s<\alpha<2,$ $n> \alpha$ and $\displaystyle 0\leq\gamma<\gamma_H(\alpha)$. Then, any positive solution $\displaystyle u\in H_0^{\frac{\alpha}{2}}(\R^n)$ of (\ref{Limiting problem}) satisfies $u\in C^1(\R^n\setminus\{0\})$ and  
\begin{equation}\label{Asymptotic behaviour at zero and infty}
\lim_{x\to 0}|x|^{\Bm}u(x)=\lambda_0\hbox{ and }\lim_{|x|\to \infty}|x|^{\Bp}u(x)=\lambda_\infty,
\end{equation}
where $\lambda_0,\lambda_\infty>0$ and $\Bm$ (resp., $\Bp$) is the unique solution in 
$\left(0,\frac{n-\alpha}{2}\right)$ (resp., in $\left(\frac{n-\alpha}{2},n-\alpha\right))$ of the equation 
$$\Psi_{n,\alpha}(t):=2^\alpha\frac{\Gamma\left(\frac{t+\alpha}{2}\right)\Gamma\left(\frac{n-t}{2}\right)}{\Gamma\left(\frac{n-t-\alpha}{2}\right)\Gamma\left(\frac{t}{2}\right)}=\gamma,$$

with $\beta_-(0) =0,$ and  $\beta_+(0) =n-\alpha.$


\end{theorem}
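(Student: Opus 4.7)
My plan is to exploit the fact that $|x|^{-\beta_\pm(\gamma)}$ are precisely the radial homogeneous functions annihilated by the linear Hardy--Schr\"odinger operator $\mathcal{L}_\gamma:=(-\Delta)^{\alpha/2}-\gamma|x|^{-\alpha}$. Indeed, a direct Fourier/residue computation gives, for $0<\beta<n-\alpha$,
$$(-\Delta)^{\alpha/2}(|x|^{-\beta})=\Psi_{n,\alpha}(\beta)\,|x|^{-\beta-\alpha},$$
so the equation $\Psi_{n,\alpha}(\beta)=\gamma$ characterizes the two homogeneous solutions of $\mathcal{L}_\gamma w=0$; the smaller root \Bm{} governs the behavior at $0$ (less singular) and the larger root \Bp{} governs the decay at $\infty$ (faster), with $\Bm+\Bp=n-\alpha$ by symmetry of $\Psi_{n,\alpha}$.

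First I would settle regularity. Since $u\in H_0^{\alpha/2}(\R^n)\cap L^{2_\alpha^*}(\R^n)$ and the right-hand side is locally bounded away from the origin, a Brezis--Kato/Moser iteration in Morrey spaces together with Silvestre's H\"older estimates for the fractional Laplacian give $u\in L^\infty_{\mathrm{loc}}(\R^n\setminus\{0\})$ and then $u\in C^1_{\mathrm{loc}}(\R^n\setminus\{0\})$. Near the origin, the Hardy--Sobolev inequality \eqref{fractional H-S-M inequality} combined with the same iteration gives the crude bound $u(x)\le C|x|^{-\Bm}$ (and analogously $u(x)\le C|x|^{-\Bp}$ for $|x|$ large) after truncation at a threshold where the nonlinearity is subordinate to the linear part.

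Next I would upgrade these crude bounds to matching two-sided estimates by comparison. For the upper bound near $0$, put $w(x)=C|x|^{-\Bm}$; on a small punctured ball, the identity above gives $\mathcal{L}_\gamma w\equiv 0$, while the nonlinearity $u^{2_\alpha^*(s)-1}|x|^{-s}$ is controlled by the crude estimate and can be absorbed on the left once $C$ is large enough, so a weak comparison principle for $\mathcal{L}_\gamma$ on punctured balls (which holds because $\gamma<\gamma_H(\alpha)$ makes the quadratic form coercive) yields $u\le C|x|^{-\Bm}$. The symmetric argument with $C|x|^{-\Bp}$ works at infinity. For the lower bound, I would combine a nonlocal Harnack inequality on dyadic annuli with the explicit barrier $c|x|^{-\Bm}$ (resp.\ $c|x|^{-\Bp}$) chosen small enough for the reverse comparison.

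Finally, the existence of the precise limits $\lambda_0,\lambda_\infty$ comes from a blow-up/blow-down analysis. Set $u_r(x):=r^{\Bm}u(rx)$; the two-sided bounds above make $\{u_r\}_{r\le 1}$ precompact in $C^1_{\mathrm{loc}}(\R^n\setminus\{0\})$, and any limit point $u_0$ as $r\to 0$ is a positive \Bm-homogeneous distributional solution of $\mathcal{L}_\gamma u_0=0$ on $\R^n\setminus\{0\}$; by a classification result such solutions must be radial multiples of $|x|^{-\Bm}$, which identifies the limit uniquely as $\lambda_0|x|^{-\Bm}$ and promotes convergence along subsequences to a full limit. The analogous scaling $r^{\Bp}u(rx)$ with $r\to\infty$ produces $\lambda_\infty$. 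The main obstacle I expect is the classification step, namely ruling out non-radial homogeneous positive solutions: unlike the local case, the nonlocality couples all angular modes, so one must decompose $(-\Delta)^{\alpha/2}$ into spherical harmonics and verify that the only sector in which $\gamma$ lies in the spectrum of the resulting ODE is the radial one, given the constraint $\beta\in(0,n-\alpha)$.
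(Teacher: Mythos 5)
You should first note that this statement is not proved in the paper at all: it is imported verbatim as Theorem 1.2 of the cited reference \cite{Ghoussoub-Robert-Zhao-Shaya} and used as a black box, so there is no internal proof to compare against. Judged on its own merits, your sketch has the right skeleton (the identity $(-\Delta)^{\alpha/2}|x|^{-\beta}=\Psi_{n,\alpha}(\beta)|x|^{-\beta-\alpha}$, the role of the two roots $\beta_\pm(\gamma)$ as profiles at $0$ and $\infty$, regularity away from the origin, barriers and comparison), and this is broadly the strategy of the reference, which works with sub- and super-solutions built from the profiles $|x|^{-\beta_\pm}$ together with a bootstrap for the operator $(-\Delta)^{\alpha/2}-\gamma|x|^{-\alpha}$.

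However, there are two genuine gaps. First, the ``crude bound'' $u\le C|x|^{-\Bm}$ near the origin is asserted as an output of Moser iteration, but membership in $H^{\alpha/2}$ only yields information at the exponent $\frac{n-\alpha}{2}$, and $\Bm<\frac{n-\alpha}{2}$; to absorb the nonlinearity $u^{2^*_\alpha(s)-2}|x|^{-s}$ into the linear part you need an initial estimate with some exponent strictly below $\frac{n-\alpha}{2}$ and then an iterative improvement down to $\Bm$ (this is exactly the content of the Proposition~3.3-type scheme in the reference, which the present paper even invokes separately to get $u_1(x)\le C|x|^{-\Bm}$); as written, your comparison step is circular. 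Second, and more seriously, the blow-up argument only shows that every subsequential limit of $r^{\Bm}u(r\,\cdot)$ is some multiple $\lambda|x|^{-\Bm}$; the classification does not, by itself, ``promote'' this to a full limit, because the constant may a priori depend on the subsequence. Establishing that $\lim_{x\to 0}|x|^{\Bm}u(x)$ exists (and likewise at infinity) is precisely the hardest part of Theorem 1.2 and requires an additional mechanism --- e.g.\ an oscillation-decay/Harnack argument for the ratio $|x|^{\Bm}u(x)$ across dyadic annuli, or trapping $u$ between barriers of the form $(\lambda\pm\epsilon)|x|^{-\Bm}+C|x|^{-\Bm+\delta}$ --- none of which appears in your sketch. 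The spherical-harmonics classification step you flag as the main obstacle is indeed nontrivial, but it is the uniqueness-of-the-limit issue that your outline leaves entirely unaddressed.
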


We refer the readers to Section 2 in \cite{Ghoussoub-Robert-Zhao-Shaya} for the definition and properties of $\Bp$ and $\Bm$ in detail

Let $u^*(x)$ be a positive weak solution of (\ref{Main problem-critical}). For any $\epsilon >0,$ we define $u_\epsilon(x)= \epsilon^{\frac{\alpha-n}{2}} u^*(\frac{x}{\epsilon} ) $ in $\R^n.$ It is easy to show that $u_\epsilon(x)$ is also a solution of (\ref{Main problem-critical}). From the assumption on $f$, we know that $f$ is a continuous function, and also  $f^+(x) =\max \{f(x),0\} \not\equiv 0.$  Let $\Sigma:= \left\{x \in \Omega : f(x) >0   \right\}$ be an open set of positive measure. Now we need to define appropriate cut-off function. Let $\eta \in C^\infty_0(\Sigma)$ be a positive cut-off function satisfying $0\le \eta \le 1$ in $\Sigma.$ 
 In addition, we choose $\rho >0  $ small enough such that $B^c_{2\rho} \subset \overline{\Sigma},$  $\eta \equiv 1$ in $B_\rho,$ and $\eta \equiv 0$ in  $B^c_{2\rho}.$ One can check that $\eta  u_\epsilon(x)$ is in $\X.$  For any $\epsilon>0,$ we define 

\begin{equation}\label{Definition of U_epsilon}
U_\epsilon(x)= \eta(x)  u_\epsilon(x) \quad \text{ for } x \in \R^n.
\end{equation}

The following lemma is a direct consequence of the computations in Section 6.1 in \cite{Ghoussoub-Robert-Zhao-Shaya}: 
\begin{lemma}\label{Lemma - estimate for eta u_epsilon for norm and H-S term}
Assume that $U_\epsilon$ defined as (\ref{Definition of U_epsilon}), and that $u_1$ be a positive solution of (\ref{Main problem-critical}). Then, for every $\epsilon >0$ small enough, we have\\

(i) $\| U_\epsilon \|^2\le \| u_\epsilon \|^2 +O(\epsilon^{\Bp - \Bm}). $\\

(ii) $ \int_{\Omega}\frac{ |U_\epsilon|^{2_{\alpha}^*(s)}}{|x|^s}dx = \int_{\Omega}\frac{ |u_\epsilon|^{2_{\alpha}^*(s)}}{|x|^s}dx  + o(\epsilon^{\Bp - \Bm}).$\\



\end{lemma}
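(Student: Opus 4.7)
The key observation is to translate the pointwise asymptotics of Theorem~\ref{Theorem-Upper and lower bound for the solution on Rn} into a decay estimate for the rescaled family $u_\epsilon(x) = \epsilon^{(\alpha - n)/2} u^*(x/\epsilon)$. Since $u^*(y) \le C |y|^{-\Bp}$ for $|y|$ large, and since the roots $\Bm, \Bp$ of $\Psi_{n,\alpha}(t) = \gamma$ satisfy the symmetry $\Bm + \Bp = n-\alpha$ (a direct consequence of the functional equation $\Psi_{n,\alpha}(t) = \Psi_{n,\alpha}(n-\alpha-t)$), a substitution yields
$$u_\epsilon(x) \le C\,\epsilon^{(\Bp - \Bm)/2}\,|x|^{-\Bp} \qquad \text{whenever } |x|\ge \rho \text{ and } \epsilon \text{ small.}$$
This single pointwise bound drives all the error estimates below.

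For part (i), I split $\|U_\epsilon\|^2 - \|u_\epsilon\|^2$ into its Gagliardo-seminorm difference and $-\gamma$ times its Hardy-weight difference. The Hardy piece equals
$$-\gamma \int_{\R^n} \frac{(\eta^2 - 1)\, u_\epsilon^2}{|x|^\alpha}\,dx,$$
which is supported in $\{|x|\ge\rho\}$ because $\eta \equiv 1$ on $B_\rho$; inserting the pointwise bound yields $O(\epsilon^{\Bp - \Bm})$, since $\int_{|x|\ge\rho} |x|^{-2\Bp - \alpha}\,dx$ converges (we have $2\Bp + \alpha > n$ because $\Bp > (n-\alpha)/2$). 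For the Gagliardo piece I use the decomposition
$$U_\epsilon(x) - U_\epsilon(y) = \eta(x)\bigl(u_\epsilon(x) - u_\epsilon(y)\bigr) + u_\epsilon(y)\bigl(\eta(x)-\eta(y)\bigr),$$
expand the square, and note that $\eta(x)^2 \le 1$ so the leading term is dominated by $|u_\epsilon(x)-u_\epsilon(y)|^2$, producing exactly $[u_\epsilon]_{\alpha/2}^2$. The remaining cross and quadratic pieces carry the factor $\eta(x)-\eta(y)$, hence vanish unless at least one of $x,y$ lies outside $B_\rho$; a case split into near-diagonal ($|x-y|\le 1$, controlled via Lipschitzness of $\eta$) and far ($|x-y|\ge 1$, controlled directly via the pointwise bound on $u_\epsilon$) handles them, with total contribution $O(\epsilon^{\Bp-\Bm})$.

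For part (ii), I observe that $|U_\epsilon|^{2^*_\alpha(s)} - u_\epsilon^{2^*_\alpha(s)} = (\eta^{2^*_\alpha(s)} - 1)\, u_\epsilon^{2^*_\alpha(s)}$ is supported in $\{|x|\ge\rho\}$. By the pointwise bound,
$$\left| \int_{\Omega} \frac{|U_\epsilon|^{2^*_\alpha(s)} - u_\epsilon^{2^*_\alpha(s)}}{|x|^s}\,dx \right| \le C\,\epsilon^{(\Bp - \Bm)\, 2^*_\alpha(s)/2} \int_{\{|x|\ge\rho\}} \frac{dx}{|x|^{\Bp \cdot 2^*_\alpha(s) + s}}.$$
The spatial integral converges because $\Bp\cdot 2^*_\alpha(s) + s > n$ (using $\Bp > (n-\alpha)/2$ and $2^*_\alpha(s)(n-\alpha)/2 = n-s$), and since $2^*_\alpha(s) > 2$ strictly, the exponent of $\epsilon$ is strictly larger than $\Bp - \Bm$, giving the claimed $o(\epsilon^{\Bp-\Bm})$.

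The main obstacle is the bookkeeping in the Gagliardo-seminorm computation: the cross term $\iint \eta(x)(u_\epsilon(x)-u_\epsilon(y))\,u_\epsilon(y)(\eta(x)-\eta(y))\,|x-y|^{-n-\alpha}\,dx\,dy$ and the quadratic term $\iint u_\epsilon(y)^2 (\eta(x)-\eta(y))^2\,|x-y|^{-n-\alpha}\,dx\,dy$ must be decomposed across the regions $B_\rho \times B_\rho^c$, $B_\rho^c \times B_\rho^c$, $\{|x-y|\le 1\}$, and $\{|x-y|\ge 1\}$, with care near the boundary of $B_{2\rho}$. This is exactly the bookkeeping carried out in Section~6.1 of \cite{Ghoussoub-Robert-Zhao-Shaya}, and the lemma is stated as a direct consequence of those computations.
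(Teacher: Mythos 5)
Your reduction to the pointwise bound $u_\epsilon(x)\le C\epsilon^{(\Bp-\Bm)/2}|x|^{-\Bp}$ for $|x|\ge\rho$ (via $\Bm+\Bp=n-\alpha$) is the right starting point, and your treatment of the Hardy-weight difference and of part (ii) is fine: in both cases the integrand itself carries the factor $(\eta^2-1)$ or $(\eta^{2^*_\alpha(s)}-1)$, so it is supported where the decay of $u_\epsilon$ applies. The paper gives no argument of its own here (it defers to the computations in Section 6.1 of \cite{Ghoussoub-Robert-Zhao-Shaya}), so the comparison has to be with what those computations actually require — and this is where your plan for the Gagliardo seminorm has a genuine gap.

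The problem is the quadratic piece $\iint u_\epsilon(y)^2(\eta(x)-\eta(y))^2|x-y|^{-n-\alpha}\,dx\,dy$ (and the corresponding part of the cross term). The constraint $\eta(x)\neq\eta(y)$ only forces \emph{one} of the two points outside $B_\rho$; it can be $x$, while $y$ sits at the origin where $u_\epsilon$ is large and your pointwise bound does not apply. For $y\in B_{\rho/2}$, $|x|\ge\rho$ one has $\int_{|x|\ge\rho}(\eta(x)-1)^2|x-y|^{-n-\alpha}dx\simeq C(\rho)$, so an absolute-value estimate of this piece gives $C\int_{B_{\rho/2}}u_\epsilon^2\,dy=\epsilon^{\alpha}\int_{B_{\rho/2\epsilon}}|u^*|^2\,dz$, which is of order $\epsilon^{\alpha}$ whenever $u^*\in L^2(\R^n)$, i.e.\ whenever $2\Bp>n$, equivalently $\Bp-\Bm>\alpha$ (this regime is squarely inside the lemma's hypotheses, e.g.\ $\gamma=0$ and $n>2\alpha$). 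Since then $\epsilon^{\alpha}\gg\epsilon^{\Bp-\Bm}$, your bookkeeping cannot reach the stated rate: the step ``$\eta(x)^2\le 1$, bound cross and quadratic terms in absolute value'' throws away exactly the cancellation that saves the estimate. The fix (and what the cited computations exploit) is to keep the square intact in that region: with $a=u_\epsilon(y)$, $b=u_\epsilon(x)$, $\theta=\eta(x)$ one has $(a-\theta b)^2-(a-b)^2\le 2(1-\theta)ab$, so the excess always carries a factor $u_\epsilon(x)$ evaluated on $\{|x|\ge\rho\}$, where $u_\epsilon(x)\le C\epsilon^{(\Bp-\Bm)/2}|x|^{-\Bp}$; combined with $\int_{B_\rho}u_\epsilon\,dy\lesssim\epsilon^{(n+\alpha)/2}\,(\rho/\epsilon)^{n-\Bp}=C\epsilon^{(\Bp-\Bm)/2}$, this yields the claimed $O(\epsilon^{\Bp-\Bm})$. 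Without this (or an equivalent sign/cancellation argument between your cross and quadratic pieces), your proof only delivers $O(\epsilon^{\min(\alpha,\,\Bp-\Bm)})$, which is weaker than the statement when $\Bp-\Bm>\alpha$.
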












We need the following two lemmas in order  to prove Theorem \ref{Theorem - Critical}.

\begin{lemma}\label{Lemma-Estimates for all terms in the functional }
Assume that $U_\epsilon$ defined as  (\ref{Definition of U_epsilon}), and that $u_1$ be the local minimum in Theorem \ref{Theorem - existence of the solution in N+ - critical}. Then, for every $\epsilon >0$ small enough, we have

\begin{equation}\label{estimate H-S term at u_1+ t eta u_epsilon}
\begin{aligned}
\int_{\Omega}  \frac{|u_1 + t U_\epsilon|^{2_{\alpha}^*(s)}}{|x|^{s}} dx & =  \int_{\Omega}  \frac{|u_1|^{2_{\alpha}^*(s)}}{|x|^{s}} dx + \int_{\Omega}  \frac{|t U_\epsilon|^{2_{\alpha}^*(s)}}{|x|^{s}} dx +   2_{\alpha}^*(s) t\int_{\Omega}  \frac{|u_1|^{2_{\alpha}^*(s)-2}}{|x|^{s}} U_\epsilon u_1 dx\\
& \quad +2_{\alpha}^*(s) t^{2_{\alpha}^*(s)-1} \int_{\Omega}  \frac{| U_\epsilon|^{2_{\alpha}^*(s)-2}}{|x|^{s}} U_\epsilon u_1 dx +o(\epsilon^{\frac{\Bp-\Bm}{2}}).
\end{aligned}
\end{equation}

\end{lemma}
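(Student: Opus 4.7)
The plan is to derive the expansion from a single pointwise algebraic inequality for $|a+b|^p$ with $p=2_{\alpha}^*(s)>2$, integrate it against the weight $|x|^{-s}$ on $\Omega$, and absorb the remainder by invoking the sharp asymptotic profiles of $u_1$ and $U_\epsilon$ near the origin provided by Theorem \ref{Theorem-Upper and lower bound for the solution on Rn}.

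\textbf{Algebraic step.} For $p>2$ and all $a,b\ge 0$, the pointwise inequality
$$\bigl|\,(a+b)^p - a^p - b^p - p\,a^{p-1}b - p\,a\,b^{p-1}\,\bigr| \;\le\; C_p\bigl(a^{p-2}b^{2}+a^{2}b^{p-2}\bigr)$$
is standard: write $(a+b)^p-a^p=p\int_0^1(a+sb)^{p-1}b\,ds$, perform a second-order Taylor expansion of the integrand in $s$, and split according to whether $a\le b$ or $a>b$. Applying this with $a=u_1(x)$ and $b=tU_\epsilon(x)$, multiplying by $|x|^{-s}$, and integrating over $\Omega$ immediately produces the four explicit terms appearing in \eqref{estimate H-S term at u_1+ t eta u_epsilon}, together with a remainder bounded by
$$R_\epsilon \;\le\; C_p\,t^{2}\!\int_{\Omega}\!\frac{u_1^{\,p-2}\,U_\epsilon^{\,2}}{|x|^s}\,dx \;+\; C_p\,t^{\,p-2}\!\int_{\Omega}\!\frac{u_1^{\,2}\,U_\epsilon^{\,p-2}}{|x|^s}\,dx.$$

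\textbf{Remainder estimate.} It is then enough to prove both integrals above are $o(\epsilon^{(\Bp-\Bm)/2})$. Since $u_\epsilon(x)=\epsilon^{(\alpha-n)/2}u^\ast(x/\epsilon)$, Theorem \ref{Theorem-Upper and lower bound for the solution on Rn} yields the two-sided bounds
$$U_\epsilon(x)\lesssim \epsilon^{(\alpha-n)/2+\Bm}\,|x|^{-\Bm}\;\;\text{for }|x|\le\epsilon, \qquad U_\epsilon(x)\lesssim \epsilon^{(\alpha-n)/2+\Bp}\,|x|^{-\Bp}\,\eta(x)\;\;\text{for }|x|>\epsilon,$$
while $u_1$, being a positive weak solution of (\ref{Main problem-critical}), obeys the corresponding upper bound $u_1(x)\lesssim |x|^{-\Bm}$ near $0$ and is uniformly bounded on $\{|x|\ge\rho\}\cap\Omega$. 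Partitioning $\Omega$ into $\{|x|\le\epsilon\}$, $\{\epsilon<|x|<\rho\}$, and $\{\rho\le|x|<2\rho\}$ (outside which $\eta\equiv 0$), each integral reduces to a finite sum of elementary radial integrals. Because $\Bm<\tfrac{n-\alpha}{2}$ and $s<\alpha$, we have $2\Bm+s<n$ and $(p-2)\Bm+s<n$, so every piece converges; a direct exponent count in each region then gives the sharp bound $o(\epsilon^{(\Bp-\Bm)/2})$.

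\textbf{Main obstacle.} The delicate point is the sharpness required in the remainder estimate: one must verify that in \emph{every} region of the decomposition the cumulative $\epsilon$-power strictly exceeds $(\Bp-\Bm)/2$. The tightest contribution comes from the transition zone $|x|\sim\epsilon$, where the near-origin profile $|x|^{-\Bm}$ of $u_\epsilon$ crosses over to its far-field profile $|x|^{-\Bp}$; it is precisely this matching that forces the exponent $(\Bp-\Bm)/2$ to be the threshold. The bookkeeping is in the same spirit as, though slightly more involved than, the computations behind Lemma \ref{Lemma - estimate for eta u_epsilon for norm and H-S term}, and proceeds along the lines of Section~6.1 in \cite{Ghoussoub-Robert-Zhao-Shaya}.
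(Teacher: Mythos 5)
Your algebraic step is correct, but the remainder estimate --- which you yourself flag as the delicate point --- does not go through, and the failure is not a matter of more careful bookkeeping: with the symmetric bound $C_p(a^{p-2}b^{2}+a^{2}b^{p-2})$ the two remainder integrals are genuinely not $o(\epsilon^{(\Bp-\Bm)/2})$ unless $2_{\alpha}^*(s)>3$. Set $\theta=\frac{\Bp-\Bm}{2}$ and $p=2_{\alpha}^*(s)$, so $p-2=\frac{2(\alpha-s)}{n-\alpha}$. On the annulus $\{\tfrac{\rho}{2}\le|x|\le\rho\}$ one has $\eta\equiv1$, $u_1\asymp1$, and by the far-field asymptotics of Theorem \ref{Theorem-Upper and lower bound for the solution on Rn}, $U_\epsilon=u_\epsilon\asymp\epsilon^{\theta}|x|^{-\Bp}\asymp\epsilon^{\theta}$; hence $\int_\Omega u_1^{2}U_\epsilon^{p-2}|x|^{-s}\,dx\gtrsim\epsilon^{(p-2)\theta}$, and the ball $\{|x|\le\epsilon\}$ likewise contributes order $\epsilon^{(p-2)\theta}$ to $\int_\Omega u_1^{p-2}U_\epsilon^{2}|x|^{-s}\,dx$. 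Now $(p-2)\theta>\theta$ exactly when $p>3$, i.e. $n<3\alpha-2s$; since $\alpha<2$ this excludes every $n\ge6$, so in most of the admissible range your remainder is of size $\epsilon^{(p-2)\theta}\gg\epsilon^{\theta}$ and the ``direct exponent count'' cannot close. The reason is that for $2<p\le3$ the symmetric inequality is far from sharp in the regimes $b\ll a$ and $a\ll b$, where the true error behaves like $ab^{p-1}$ and $a^{p-1}b$ respectively, not like $a^{p-2}b^{2}+a^{2}b^{p-2}$.

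The repair --- and this is the structure of the computation behind (17) in \cite{Brezis-Nirenberg-nonzero data}, to which the paper defers --- is to split $\Omega$ according to whether $tU_\epsilon\le u_1$ or $tU_\epsilon>u_1$ (with $u_1\lesssim|x|^{-\Bm}$ and $U_\epsilon\approx\epsilon^{\theta}|x|^{-\Bp}$ away from the core, the interface sits at $|x|\sim\epsilon^{1/2}$), and to use the asymmetric bound valid for $2<p<3$: the error in the four-term expansion is at most $C\min\{u_1(tU_\epsilon)^{p-1},\,u_1^{p-1}tU_\epsilon\}$, i.e. $u_1(tU_\epsilon)^{p-1}$ where $tU_\epsilon\le u_1$ and $u_1^{p-1}tU_\epsilon$ on the complement (for $p>3$ your symmetric bound used regionwise is fine, and $p=3$ is exact). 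Inserting the profiles of $u_1$ and $U_\epsilon$, each region then contributes $O(\epsilon^{\frac{p}{2}\theta})=O(\epsilon^{\frac{p}{4}(\Bp-\Bm)})$, which is $o(\epsilon^{\frac{\Bp-\Bm}{2}})$ precisely because $p>2$. As written, your argument proves the lemma only in the narrow range $n<3\alpha-2s$.
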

\begin{proof}

The proof goes exactly as (17)  in \cite[Theorem 1]{Brezis-Nirenberg-nonzero data} with only minor modifications. We omit it here.







\end{proof}

\subsection{The existence of a minimizer on $\Ne^+$}

In the following theorem, we prove the existence of a positive solution of (\ref{Main problem-critical}) on $\Ne^+.$
\begin{theorem}\label{Theorem - existence of the solution in N+ - critical}
 For any $\lambda \in (0,\Lambda^*),$ there exists a minimizer  $u_1 \in \Ne^+$ for the functional $\I$ which verifies
\begin{enumerate}
\item $\I (u_1) = \M = \M^+.$
\item $u_1 $ is positive solution of (\ref{Main problem-critical}).
\item $\I(u_1) \to 0 \quad \text{ as } \lambda \to 0.$ 
\end{enumerate}

\end{theorem}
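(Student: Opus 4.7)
The plan is to follow the sub-critical proof of Theorem \ref{Theorem - existence of the solution in N+}, paying additional attention to the pass-to-the-limit in the critical Hardy-Sobolev term. By Proposition \ref{Proposition - existence of minimizing sequence, using Ekeland principle}(1) one obtains a minimizing sequence $(u_k)_{k\in\N} \subset \Ne$ with $\I(u_k) = \M + o(1)$ and $\I'(u_k) = o(1)$ in $(\X)'$. Lemma \ref{Lemma functional I is coercive and bounded below}(2) yields boundedness in $\X$, so up to a subsequence $u_k \rightharpoonup u_1$ weakly in $\X$, $u_k \to u_1$ almost everywhere, and $u_k \to u_1$ in $L^r(\Omega)$ for every $1 \le r < 2_\alpha^*$. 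Exactly as in the sub-critical case, I would first verify that $\int_\Omega f|u_1|^q\,dx \neq 0$: otherwise compactness of the $L^q$-embedding together with (\ref{energy functional I_Lambda for nehari }) would yield $\I(u_k) = (\tfrac{1}{2} - \tfrac{1}{p})\|u_k\|^2 + o(1) \ge o(1)$, contradicting $\M < 0$; in particular $u_1 \not\equiv 0$.

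Next I would pass to the limit in $\langle \I'(u_k),\phi\rangle = o(1)$ for each $\phi \in C_c^\infty(\Omega)$: the bilinear and Hardy terms by weak convergence in $\X$, the concave term by the compact embedding into $L^q$, and the critical Hardy-Sobolev term by a.e.\ convergence combined with the uniform boundedness of $|u_k|^{2_\alpha^*(s)-2}u_k$ in the corresponding dual weighted Lebesgue space. By density of $C_c^\infty(\Omega)$ in $\X$ the identity extends to all test functions in $\X$; testing against $u_1$ gives $u_1 \in \Ne$.

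The decisive step is upgrading to strong convergence in $\X$. Writing $w_k := u_k - u_1$, the Brezis-Lieb lemma and its Hardy-Sobolev weighted variant (both justified by a.e.\ convergence and the uniform weighted $L^{2_\alpha^*(s)}$ bound) yield
\begin{equation*}
\|u_k\|^2 = \|u_1\|^2 + \|w_k\|^2 + o(1), \qquad \int_\Omega \frac{|u_k|^{2_\alpha^*(s)}}{|x|^s}\,dx = \int_\Omega \frac{|u_1|^{2_\alpha^*(s)}}{|x|^s}\,dx + \int_\Omega \frac{|w_k|^{2_\alpha^*(s)}}{|x|^s}\,dx + o(1).
\end{equation*}
Subtracting the Nehari identities for $u_k$ and $u_1$ (using compactness of the concave term) gives $L := \lim_k \|w_k\|^2 = \lim_k \int_\Omega |w_k|^{2_\alpha^*(s)}/|x|^s\,dx$, and (\ref{energy functional I_Lambda for nehari }) then produces $\M = \I(u_1) + (\tfrac{1}{2}-\tfrac{1}{p})L$. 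Since $u_1 \in \Ne$ implies $\I(u_1) \ge \M$ and $\tfrac{1}{2}-\tfrac{1}{p}>0$, this forces $L = 0$. This is the key observation: the negativity of $\M$ combined with $u_1 \in \Ne$ bypasses any comparison with the concentration threshold $S_p^{p/(p-2)}$, so the asymptotic estimates of Lemma \ref{Lemma - estimate for eta u_epsilon for norm and H-S term} are not needed here; they become essential only for the minimizer on $\Ne^-$.

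Once $u_k \to u_1$ strongly in $\X$, the remaining conclusions follow from the sub-critical argument essentially verbatim. The fibering argument of Theorem \ref{Theorem - existence of the solution in N+}, based on Lemma \ref{Lemma - existence of t+ and t-}, rules out $u_1 \in \Ne^-$, so $u_1 \in \Ne^+$ and $\I(u_1) = \M = \M^+$. Replacing $u_1$ by $|u_1|$ (still in $\Ne^+$ with the same energy) and invoking the strong maximum principle \cite[Proposition 2.2.8]{Silvestre} yields positivity, while $\I(u_1) \to 0$ as $\lambda \to 0$ is immediate from the lower bound in Lemma \ref{Lemma functional I is coercive and bounded below}(2). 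The main obstacle I foresee is the careful justification of the Brezis-Lieb-type splittings in the weighted setting and of the pass-to-the-limit for the critical Hardy-Sobolev term, though both reduce to standard arguments once a.e.\ convergence and the uniform Hardy-Sobolev bound are in hand.
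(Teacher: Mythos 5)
Your proposal is correct, and it is more careful than the paper at the one point where the critical exponent actually matters. The paper's own proof of this theorem is a single line: it declares the result a straightforward consequence of Theorem \ref{Theorem - existence of the solution in N+} with $p=2^*_\alpha(s)$, relying on the blanket remark that all results of Sections 3--4 persist under (\ref{Condition-Critical case}). But the strong-convergence step in Theorem \ref{Theorem - existence of the solution in N+} passes to the limit in $\int_\Omega g|u_k|^p|x|^{-s}dx$ and implicitly uses the compactness of the embedding into the weighted $L^p$ space, which fails at $p=2^*_\alpha(s)$; to make the paper's chain of inequalities work in the critical case one must either exploit $g\equiv 1\ge 0$ and weak lower semicontinuity (Fatou) of the Hardy--Sobolev term, or simply note that the level $\M$ trivially lies below $\M+\frac{\alpha-s}{2(n-s)}S_p^{\frac{n-s}{\alpha-s}}$ so that the (PS) Lemma \ref{Lemma - any sequence which satisfies two conditions has strongly convergence in X} applies. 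Your route supplies this missing compactness directly: after identifying the weak limit as a nontrivial element of $\Ne$, the Brezis--Lieb splittings give $\M=\I(u_1)+(\tfrac12-\tfrac1p)L$ with $L=\lim_k\|u_k-u_1\|^2=\lim_k\int_\Omega |u_k-u_1|^{2^*_\alpha(s)}|x|^{-s}dx$, and $\I(u_1)\ge\M$ forces $L=0$; this is self-contained (the weighted Brezis--Lieb lemma is the same one the paper invokes in Lemma \ref{Lemma - any sequence which satisfies two conditions has strongly convergence in X}) and avoids any comparison with the concentration threshold, consistent with your correct observation that Lemmas \ref{Lemma - estimate for eta u_epsilon for norm and H-S term}--\ref{Lemma-Estimates for all terms in the functional } are needed only for the minimizer on $\Ne^-$. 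The remaining steps (nontriviality via $\int_\Omega f|u_1|^q dx\neq 0$, the fibering argument excluding $\Ne^-$, positivity via $|u_1|$ and the maximum principle, and $\I(u_1)\to 0$ as $\lambda\to 0$) coincide with the paper's subcritical argument; only note that for the fibering step you should record $\int_\Omega f|u_1|^q dx>0$ (not merely $\neq 0$), as Lemma \ref{Lemma - existence of t+ and t-} requires this to produce $t^+$.
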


\begin{proof}
The proof is a straightforward consequence of Theorem \ref{Theorem - existence of the solution in N+} with $ p = 2_\alpha^*(s).$
\end{proof}

\subsection{The existence of a minimizer on $\Ne^-$}


In obtaining the existence result on $\Ne^-,$ it is crucial 
to have the (P.S) conditions for all level $\sigma < \M + \frac{\alpha - s}{2(n-s)} S_p^{\frac{n-s}{\alpha-s}},$ which will be shown in the next two lemmas.

\begin{lemma}
Let $u_1$ be the local minimum in Theorem \ref{Theorem - existence of the solution in N+ - critical} . Then, for $\epsilon>0$ small enough, we have 

$$ \sup\limits_{t \ge 0}\I(u_1 + t U_\epsilon ) < \M + \frac{\alpha - s}{2(n-s)} S_p^{\frac{n-s}{\alpha-s}}.$$
\end{lemma}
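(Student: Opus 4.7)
The plan is to set $h_\epsilon(t) := \I(u_1 + tU_\epsilon)$ for $t \ge 0$ and show $\sup_{t \ge 0} h_\epsilon(t) < \M + \tfrac{\alpha-s}{2(n-s)}S_p^{(n-s)/(\alpha-s)}$ for $\epsilon$ small enough. Since $h_\epsilon(0) = \M$ and $h_\epsilon(t) \to -\infty$ as $t \to \infty$ (the term $-\tfrac{t^p}{p}\int_\Omega U_\epsilon^p/|x|^s\,dx$ eventually dominates), the supremum is attained at some $t_\epsilon > 0$. Writing $A_\epsilon := \|U_\epsilon\|^2$ and $B_\epsilon := \int_\Omega U_\epsilon^p/|x|^s\,dx$, the condition $h_\epsilon'(t_\epsilon) = 0$ gives at leading order $t_\epsilon^{p-2} \approx A_\epsilon/B_\epsilon$, which via Lemma~\ref{Lemma - estimate for eta u_epsilon for norm and H-S term} confines $t_\epsilon$ to a compact subinterval of $(0,\infty)$ uniformly in $\epsilon$.

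First I would expand $h_\epsilon(t)$ around $u_1$. Since $u_1$ is a critical point of $\I$,
\[
\langle u_1, U_\epsilon\rangle = \lambda \int_\Omega f(x) u_1^{q-1}U_\epsilon\,dx + \int_\Omega \frac{u_1^{p-1}U_\epsilon}{|x|^s}\,dx.
\]
Using this identity, Lemma~\ref{Lemma-Estimates for all terms in the functional } for the critical term, and the first-order expansion of $-\tfrac{\lambda}{q}\int f|u_1+tU_\epsilon|^q\,dx$, the $t$-linear contributions cancel and
\[
h_\epsilon(t) = \M + \tfrac{t^2}{2}A_\epsilon - \tfrac{t^p}{p}B_\epsilon - t^{p-1}\int_\Omega \frac{u_1\,U_\epsilon^{p-1}}{|x|^s}\,dx + r_\epsilon(t),
\]
where $r_\epsilon(t)$ collects the convex-remainder from the $f$-term (non-positive, since $\tau\mapsto \tau^q$ is convex on $[0,\infty)$ for $q>1$ and $f>0$ on $\operatorname{supp}(U_\epsilon)$) together with the $o(\epsilon^{(\Bp-\Bm)/2})$ absorbed in Lemma~\ref{Lemma-Estimates for all terms in the functional }.

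Next, a direct calculus computation gives
\[
\sup_{t \ge 0}\Bigl(\tfrac{t^2}{2}A_\epsilon - \tfrac{t^p}{p}B_\epsilon\Bigr) = \tfrac{\alpha-s}{2(n-s)}\bigl(A_\epsilon B_\epsilon^{-2/p}\bigr)^{(n-s)/(\alpha-s)},
\]
where I used $\tfrac{p-2}{2p} = \tfrac{\alpha-s}{2(n-s)}$ and $\tfrac{p}{p-2} = \tfrac{n-s}{\alpha-s}$, both following from $p = 2_\alpha^*(s)$. Combining Lemma~\ref{Lemma - estimate for eta u_epsilon for norm and H-S term} with the fact that $u_\epsilon$ is an extremal for the Hardy--Sobolev quotient defining $S_p$ yields $A_\epsilon B_\epsilon^{-2/p} = S_p + O(\epsilon^{\Bp-\Bm})$, hence
\[
\sup_{t \ge 0}\Bigl(\tfrac{t^2}{2}A_\epsilon - \tfrac{t^p}{p}B_\epsilon\Bigr) \le \tfrac{\alpha-s}{2(n-s)}S_p^{(n-s)/(\alpha-s)} + O(\epsilon^{\Bp-\Bm}).
\]

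The main obstacle is certifying that the cross term $-t_\epsilon^{p-1}\int_\Omega u_1 U_\epsilon^{p-1}/|x|^s\,dx$ dominates this positive error. Using Theorem~\ref{Theorem-Upper and lower bound for the solution on Rn}, the rescaling $u_\epsilon(x) = \epsilon^{(\alpha-n)/2}u^*(x/\epsilon)$ satisfies $u_\epsilon(x) \asymp \epsilon^{(\alpha-n)/2+\Bm}|x|^{-\Bm}$ for $|x| \ll \epsilon$ and $u_\epsilon(x) \asymp \epsilon^{(\alpha-n)/2+\Bp}|x|^{-\Bp}$ for $|x| \gg \epsilon$. Splitting $\int_\Omega u_1 U_\epsilon^{p-1}/|x|^s\,dx$ at the scale $|x|\sim \epsilon$ and invoking $u_1(0)>0$ from the strong maximum principle (see Theorem~\ref{Theorem - existence of the solution in N+ - critical}), a direct computation yields the lower bound $\int_\Omega u_1 U_\epsilon^{p-1}/|x|^s\,dx \gtrsim \epsilon^{(\Bp-\Bm)/2}$. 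Since $t_\epsilon$ is bounded below away from $0$, this strictly beats both the $O(\epsilon^{\Bp-\Bm})$ error from the preceding step and the non-positive $r_\epsilon(t_\epsilon)$, delivering the desired strict inequality.
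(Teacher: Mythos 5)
Your proposal follows essentially the same route as the paper: expand $\I(u_1+tU_\epsilon)$ using $\I'(u_1)=0$ and Lemma \ref{Lemma-Estimates for all terms in the functional } for the critical term, discard the sign-definite $f$-remainder, bound $\sup_t\bigl(\tfrac{t^2}{2}\|U_\epsilon\|^2-\tfrac{t^p}{p}\int_\Omega U_\epsilon^p|x|^{-s}dx\bigr)$ by $\tfrac{\alpha-s}{2(n-s)}S_p^{\frac{n-s}{\alpha-s}}$ up to $O(\epsilon^{\Bp-\Bm})$ via Lemma \ref{Lemma - estimate for eta u_epsilon for norm and H-S term} and extremality of $u_\epsilon$, and let the negative cross term $-t^{p-1}\int_\Omega u_1U_\epsilon^{p-1}|x|^{-s}dx$ force the strict inequality. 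The gap is in how you certify the size of that cross term. You claim $\int_\Omega u_1U_\epsilon^{p-1}|x|^{-s}dx\gtrsim\epsilon^{\frac{\Bp-\Bm}{2}}$ from ``$u_1(0)>0$''. For $\gamma>0$ this input is both ill-posed ($u_1$ is expected to be singular at $0$, of order $|x|^{-\Bm}$, which is exactly why the paper invokes the bound $u_1\le C|x|^{-\Bm}$) and insufficient: with only $u_1\ge c_0>0$ near the origin, the rescaling $x=\epsilon y$ gives
\[
\int_{B_\rho}\frac{u_\epsilon^{p-1}}{|x|^{s}}\,dx=\epsilon^{\frac{n-\alpha}{2}}\int_{B_{\rho/\epsilon}}\frac{(u^*)^{p-1}}{|y|^{s}}\,dy ,
\]
and since $\tfrac{n-\alpha}{2}=\tfrac{\Bp-\Bm}{2}+\Bm$, this is only of order $\epsilon^{\frac{\Bp-\Bm}{2}+\Bm}$ when the last integral converges, and of order $\epsilon^{\frac{\Bp-\Bm}{2}+\frac{(\alpha-s)(\Bp-\Bm)}{n-\alpha}}$ when it diverges at infinity (which happens when $(p-1)\Bp+s<n$, e.g.\ for $\gamma$ close to $\gamma_H(\alpha)$). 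In either regime the exponent exceeds $\tfrac{\Bp-\Bm}{2}$ by a strictly positive amount whenever $\Bm>0$, so your lower bound cannot absorb the $o(\epsilon^{\frac{\Bp-\Bm}{2}})$ error inherited from Lemma \ref{Lemma-Estimates for all terms in the functional }, and the strict inequality is not established on the range $0<\gamma<\gamma_H(\alpha)$ that the theorem covers.

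What is actually needed is the sharp singular behaviour of $u_1$ at the origin from below, $u_1(x)\ge c|x|^{-\Bm}$ near $0$ (the two-sided counterpart of the upper bound $u_1\le C|x|^{-\Bm}$ obtained from the iterative scheme of \cite{Ghoussoub-Robert-Zhao-Shaya}); inserting the weight $|x|^{-\Bm}$ in the rescaling produces exactly the rate $\epsilon^{\frac{\Bp-\Bm}{2}}$, the limiting integral $\int_{\R^n}(u^*)^{p-1}|y|^{-s-\Bm}dy$ being finite precisely because $\Bm<\tfrac{n-\alpha}{2}<\Bp$. This is the step on which the whole lemma turns, and it is the one place where your argument, as written, only works in the case $\gamma=0$ (where $\Bm=0$ and $u_1(0)>0$ indeed gives rate $\epsilon^{\frac{n-\alpha}{2}}=\epsilon^{\frac{\Bp-\Bm}{2}}$). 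The remaining ingredients of your proposal — cancellation of the $t$-linear terms via the critical-point identity, non-positivity of the concave remainder by convexity, the explicit value of $\sup_t(\tfrac{t^2}{2}A_\epsilon-\tfrac{t^p}{p}B_\epsilon)$, and confining the maximizer $t_\epsilon$ to a compact set (where you should also note that if $t_\epsilon\to0$ the conclusion is immediate since the values tend to $\M<\M+\tfrac{\alpha-s}{2(n-s)}S_p^{\frac{n-s}{\alpha-s}}$) — match the paper's argument and are correct.
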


\begin{proof}
We first note that

$$\I(u_1 + t U_\epsilon ) = \frac{1}{2} \| u_1 + t U_\epsilon\|^2  -\frac{\lambda}{q} \int_{\Omega}  f(x) |u_1 + t U_\epsilon|^q dx -\frac{1}{2_{\alpha}^*(s)}\int_{\Omega}  \frac{|u_1 + t U_\epsilon|^{2_{\alpha}^*(s)}}{|x|^{s}} dx.$$

On the other hand, simple computations yield 

\begin{align*}
\| u_1 + t U_\epsilon\|^2 = \|u_1\|^2 + t^2 \|U_\epsilon \|^2 + 2 t \langle u_1 , U_\epsilon \rangle_{\X} - 2 \gamma \int_{\Omega} \frac{u_1 U_\epsilon}{|x|^\alpha} dx.
\end{align*}

Thus,

\begin{equation}\label{the functiona I-lambda at (u_1+t eta u_epsilon)}
\begin{aligned}
\I(u_1 + t U_\epsilon ) &= \frac{1}{2} \|u_1\|^2 + \frac{t^2}{2} \|U_\epsilon\|^2 + t \langle u_1 , U_\epsilon \rangle_{\X} - \gamma \int_{\Omega} \frac{u_1 U_\epsilon}{|x|^\alpha} dx\\
&   -\frac{\lambda}{q} \int_{\Omega}  f(x) |u_1 + t U_\epsilon|^q dx -\frac{1}{2_{\alpha}^*(s)}\int_{\Omega}  \frac{|u_1 + t U_\epsilon|^{2_{\alpha}^*(s)}}{|x|^{s}} dx.
\end{aligned}
\end{equation}
Now we deal with each terms separately:

Regarding the first term, since $u_1$ is a minimizer for $\I,$ we have 

$$\frac{1}{2} \|u_1\|^2 = \I(u_1) + \frac{\lambda}{q} \int_{\Omega} f(x) |u_1|^q dx + \frac{1}{2_{\alpha}^*(s)}\int_{\Omega}  \frac{|u_1|^{2_{\alpha}^*(s)}}{|x|^{s}} dx.$$

For the third one, we substitute test function $\eta u_1$ into  $\I'(u)=0 \text{  in } \X$ to get

$$t \langle u_1 , U_\epsilon \rangle_{\X} - \gamma \int_{\Omega} \frac{u_1 U_\epsilon}{|x|^\alpha} dx =t \lambda  \int_{\Omega} f(x) |u_1|^{q-1}  U_\epsilon dx + t \int_{\Omega}  \frac{|u_1|^{2_{\alpha}^*(s)-1}}{|x|^{s}} U_\epsilon dx. $$

Plugging the last two inequalities and (\ref{estimate H-S term at u_1+ t eta u_epsilon}) into (\ref{the functiona I-lambda at (u_1+t eta u_epsilon)}), we obtain

\begin{equation}
\begin{aligned}
\I(u_1 + t U_\epsilon ) &= \I(u_1) -  \frac{\lambda}{q} \int_{\Sigma} \left(  f(x) |u_1 + t U_\epsilon|^q dx  - f(x) |u_1|^q dx - tq   f(x) |u_1|^{q-1}  U_\epsilon \right) dx \\
& + \frac{t^2}{2} \|U_\epsilon \|^2   - \frac{t^{2_{\alpha}^*(s)}}{2_{\alpha}^*(s)}\int_{\Omega}  \frac{| U_\epsilon|^{2_{\alpha}^*(s)}}{|x|^{s}} dx - t^{2_{\alpha}^*(s)-1} \int_{\Omega}  \frac{|U_\epsilon|^{2_{\alpha}^*(s)-1}}{|x|^{s}}  u_1 dx +o(\epsilon^{\frac{\Bp-\Bm}{2}}).
\end{aligned}
\end{equation}

We also have 

\begin{align*}
&\int_{\Sigma} \left(  f(x) |u_1 + t U_\epsilon|^q dx  - f(x) |u_1|^q dx - tq   f(x) |u_1|^{q-1}  U_\epsilon \right) dx\\
& \quad = q \int_{\Sigma} f(x) \left(     \int^{t U_\epsilon}_0 |u_1+\tau |^{q-1} - |u_1|^{q-1} d \tau \right) dx \\
&  \quad \ge q \int_{\Sigma} f^+(x) \left(     \int^{t U_\epsilon}_0 |u_1+\tau |^{q-1} - |u_1|^{q-1} d \tau \right) dx\\
& \quad \ge 0.
\end{align*}

In addition, we know that $u_1$ is a positive solution of (\ref{Main problem-critical}).   Following the iterative scheme used to prove Proposition 3.3 in \cite{Ghoussoub-Robert-Zhao-Shaya}, one can show that

$$u_1(x) \le C |x|^{-\Bm} \quad  \text{ for  all} \ x \in \Omega.  $$



Thus,

\begin{align*}
\int_{\Omega} \frac{|U_\epsilon|^{2_{\alpha}^*(s)-1}}{|x|^{s}} u_1 dx 
&\le C \int_{\Omega} \frac{|U_\epsilon|^{2_{\alpha}^*(s)-1}}{|x|^{s}} |x|^{-\Bm} dx\\
& = C \int_{B_{\delta}} \frac{|U_\epsilon|^{2_{\alpha}^*(s)-1}}{|x|^{s}} |x|^{-\Bm} dx + C \int_{\Omega \setminus B_{\delta}} \frac{|U_\epsilon|^{2_{\alpha}^*(s)-1}}{|x|^{s}} |x|^{-\Bm} dx\\
&= C \epsilon^{n+ \frac{\alpha-n}{2}l-s -\Bm} \int_{B_{\epsilon^{-1} \delta}}  \frac{|u^*|^{2_{\alpha}^*(s)-1}}{|x|^{s}} |x|^{-\Bm} dx +o(\epsilon^{\frac{\Bp-\Bm}{2}})\\
& =  C \epsilon^{n+ \frac{\alpha-n}{2}l-s -\Bm} \int_{\R^n}  \frac{|u^*|^{2_{\alpha}^*(s)-1}}{|x|^{s}} |x|^{-\Bm}dx +o(\epsilon^{\frac{\Bp-\Bm}{2}}) \\
& = C \epsilon^{\frac{\Bp-\Bm}{2}} \int_{\R^n}  \frac{|u^*|^{2_{\alpha}^*(s)-1}}{|x|^{s}} |x|^{-\Bm}dx +o(\epsilon^{\frac{\Bp-\Bm}{2}}) \\
&= K \epsilon^{\frac{\Bp-\Bm}{2}} +o(\epsilon^{\frac{\Bp-\Bm}{2}}) \quad \text{ for some } K>0, \quad \text{ as }\epsilon \to 0.
\end{align*}



Note that one can use the asymptotic (\ref{Asymptotic behaviour at zero and infty}) in Theorem \ref{Theorem-Upper and lower bound for the solution on Rn} to show that the last integral is finite.

Therefore, there exist $c>0$ such that

\begin{equation*}
\begin{aligned}
\I(u_1 + t U_\epsilon ) &\le  \I(u_1) + \frac{t^2}{2} \| u_\epsilon \|^2   - \frac{t^{2_{\alpha}^*(s)}}{2_{\alpha}^*(s)}\int_{\Omega}  \frac{|  u_\epsilon|^{2_{\alpha}^*(s)}}{|x|^{s}} dx  -  c \ \epsilon^{\frac{\Bp-\Bm}{2}} + o(\epsilon^{\frac{\Bp-\Bm}{2}}).
\end{aligned}
\end{equation*}

We now define

$$m(t) = \frac{t^2}{2} \| u_\epsilon \|^2  - \frac{t^{2_{\alpha}^*(s)}}{2_{\alpha}^*(s)}\int_{\Omega}  \frac{|  u_\epsilon|^{2_{\alpha}^*(s)}}{|x|^{s}} dx \quad \text{ for } t>0. $$

By straightforward computations, we get that $m$ attained its maximum at $\tilde{t} = \left( \frac{ \| u_\epsilon \|^2}{\int_\Omega \frac{|  u_\epsilon|^{2_{\alpha}^*(s)}}{|x|^{s}} dx }\right)^{\frac{1}{2_{\alpha}^*(s) -2}}, $ $\lim\limits_{t \to \infty} m(t) =-\infty,$ and also

$$m(\tilde{t}) =\left( \frac{1}{2} - \frac{1}{2_{\alpha}^*(s)}\right)  \|u_\epsilon\|^{\frac{2 2_{\alpha}^*(s)}{2_{\alpha}^*(s) -2}} \left(\int_\Omega \frac{|  u_\epsilon|^{2_{\alpha}^*(s)}}{|x|^{s}} dx    \right)^{-\frac{2}{2_{\alpha}^*(s) -2}}.   $$

Thus, for all $t>0,$

$$m(t) \le \left( \frac{1}{2} - \frac{1}{2_{\alpha}^*(s)}\right)  \|u_\epsilon\|^{\frac{2 2_{\alpha}^*(s)}{2_{\alpha}^*(s) -2}} \left(\int_\Omega \frac{|  u_\epsilon|^{2_{\alpha}^*(s)}}{|x|^{s}} dx    \right)^{-\frac{2}{2_{\alpha}^*(s) -2}}.   $$

On the other hand, since $u_\epsilon$ is an extremal for  (\ref{fractional H-S-M inequality}), we have

$$\|u_\epsilon\|^2 = S_p  \left(\int_\Omega \frac{|  u_\epsilon|^{2_{\alpha}^*(s)}}{|x|^{s}} dx    \right)^{\frac{2}{2_{\alpha}^*(s)}}.$$
 
Hence,

 $$ S_p^{\frac{2_{\alpha}^*(s)}{2_{\alpha}^*(s) -2}}=\|u_\epsilon\|^{\frac{2 2_{\alpha}^*(s)}{2_{\alpha}^*(s) -2}} \left(\int_\Omega \frac{|  u_\epsilon|^{2_{\alpha}^*(s)}}{|x|^{s}} dx    \right)^{-\frac{2}{2_{\alpha}^*(s) -2}}.$$

Noting that $\frac{1}{2} - \frac{1}{2_{\alpha}^*(s)} =\frac{\alpha -s}{2(n-s)} $ and $\frac{2_{\alpha}^*(s)}{2_{\alpha}^*(s) -2} =\frac{n-s}{\alpha-s} ,$ we get 

$$m(t) \le \frac{\alpha -s}{2(n-s)} S_p^{\frac{n-s}{\alpha-s}} \quad \text{ for all } t >0 .$$  
 
Therefore,

\begin{equation*}
\begin{aligned}
\I(u_1 + t U_\epsilon )& \le \I(u_1)+ \frac{\alpha -s}{2(n-s)} S_p^{\frac{n-s}{\alpha-s}} - c \ \epsilon^{\frac{\Bp-\Bm}{2}} + o(\epsilon^{\frac{\Bp-\Bm}{2}}).\\
& <  \M +\frac{\alpha -s}{2(n-s)} S_p^{\frac{n-s}{\alpha-s}} \quad  \text{ for all } t>0.
\end{aligned}
\end{equation*} 
 
\end{proof}

\begin{lemma}\label{Lemma - any sequence which satisfies two conditions has strongly convergence in X}
Suppose that a sequence $(u_k)_{k \in \N}$ satisfies  the following:

\begin{enumerate}
\item $\I(u_k) = \sigma +o(1) \quad \text{with } \sigma < \M +\frac{\alpha -s}{2(n-s)} S_p^{\frac{n-s}{\alpha-s}} $
\item $\I'(u_k) =o(1) \quad \text{in } (\X)'$
\end{enumerate}
Then, there exists a sub-sequence of $(u_k)_{k \in \N}$ which is strongly convergence in $\X.$
\end{lemma}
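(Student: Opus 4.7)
The plan is to establish a concentration-compactness argument in the spirit of the classical Brezis--Nirenberg strategy, adapted to the nonlocal weighted setting. First I would show that the sequence $(u_k)_{k\in\N}$ is bounded in $\X$. Since $\I'(u_k)=o(1)$ and $\I(u_k)=\sigma+o(1)$, a linear combination mimicking the Nehari identity gives an estimate of the form
\begin{equation*}
\sigma+o(1)+o(1)\|u_k\|=\I(u_k)-\tfrac{1}{2_\alpha^*(s)}\langle\I'(u_k),u_k\rangle=\bigl(\tfrac12-\tfrac{1}{2_\alpha^*(s)}\bigr)\|u_k\|^2-\lambda\bigl(\tfrac1q-\tfrac{1}{2_\alpha^*(s)}\bigr)\int_\Omega f(x)|u_k|^q\,dx,
\end{equation*}
and H\"older plus the fractional Hardy--Sobolev embedding control the $f$-term by $C\|u_k\|^q$ with $q<2$, yielding boundedness. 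Up to a subsequence I then extract $u\in\X$ with $u_k\rightharpoonup u$ weakly in $\X$, $u_k\to u$ strongly in $L^r(\Omega)$ for $r<2^*_\alpha$, and a.e.\ in $\Omega$. Passing to the limit in $\langle\I'(u_k),\varphi\rangle=o(1)$ for $\varphi\in C^\infty_0(\Omega)$ shows that $u$ is a weak solution of (\ref{Main problem-critical}), hence $u\in\Ne$ if $u\neq 0$.

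Next I would apply the Brezis--Lieb lemma, in its versions for the Gagliardo seminorm, for the Hardy weight $|x|^{-\alpha}$ (using that $u_k\to u$ a.e.\ and that $\int_\Omega |u|^2/|x|^\alpha$ is controlled by the norm via Hardy), and for the singular term $|x|^{-s}|u|^{2_\alpha^*(s)}$. Setting $v_k:=u_k-u$, this yields
\begin{equation*}
\|u_k\|^2=\|u\|^2+\|v_k\|^2+o(1),\qquad \int_\Omega\frac{|u_k|^{2_\alpha^*(s)}}{|x|^s}dx=\int_\Omega\frac{|u|^{2_\alpha^*(s)}}{|x|^s}dx+\int_\Omega\frac{|v_k|^{2_\alpha^*(s)}}{|x|^s}dx+o(1),
\end{equation*}
while $\int_\Omega f(x)|u_k|^q dx\to\int_\Omega f(x)|u|^q dx$ by the strong $L^q$-convergence since $q<2<2^*_\alpha$. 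Using $\langle\I'(u),u\rangle=0$, the relations $\I(u_k)=\sigma+o(1)$ and $\langle\I'(u_k),u_k\rangle=o(1)$ reduce respectively to
\begin{equation*}
\tfrac12\|v_k\|^2-\tfrac{1}{2_\alpha^*(s)}\int_\Omega\frac{|v_k|^{2_\alpha^*(s)}}{|x|^s}dx=\sigma-\I(u)+o(1),\qquad \|v_k\|^2-\int_\Omega\frac{|v_k|^{2_\alpha^*(s)}}{|x|^s}dx=o(1).
\end{equation*}

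Now I would argue by contradiction: assume $\|v_k\|^2\to L>0$. The second identity gives $\int_\Omega|v_k|^{2_\alpha^*(s)}/|x|^s\,dx\to L$ as well, and the Hardy--Sobolev inequality (\ref{fractional H-S-M inequality}), together with the definition of $S_p=S_{2_\alpha^*(s)}$, forces $L\ge S_p^{(n-s)/(\alpha-s)}$. Combining with the first identity,
\begin{equation*}
\sigma=\I(u)+\Bigl(\tfrac12-\tfrac{1}{2_\alpha^*(s)}\Bigr)L=\I(u)+\tfrac{\alpha-s}{2(n-s)}L\ge \I(u)+\tfrac{\alpha-s}{2(n-s)}S_p^{\frac{n-s}{\alpha-s}}.
\end{equation*}
Since $u$ is either zero or lies in $\Ne$, and in the latter case $\I(u)\ge \M$, while in the former $\I(u)=0\ge \M$ by Lemma \ref{Lemma functional I is coercive and bounded below}, in both cases $\sigma\ge \M+\tfrac{\alpha-s}{2(n-s)}S_p^{(n-s)/(\alpha-s)}$, contradicting the hypothesis on $\sigma$. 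Hence $\|v_k\|\to 0$, i.e.\ $u_k\to u$ strongly in $\X$.

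The main obstacle I anticipate is justifying the Brezis--Lieb splitting for the singular Hardy--Sobolev term $\int_\Omega |u_k|^{2_\alpha^*(s)}/|x|^s\,dx$; this is not completely standard because of the weight at the origin, but it follows from a pointwise Brezis--Lieb identity combined with equi-integrability of $|u_k|^{2_\alpha^*(s)}/|x|^s$ on any set avoiding $0$, together with the dominated convergence of the mass near $0$ controlled by the uniform Hardy--Sobolev bound on $(u_k)$. A minor secondary point is handling the case $u\equiv 0$ separately, where one needs $\M\le 0$ (supplied by Lemma \ref{Lemma functional I is coercive and bounded below}) to close the contradiction.
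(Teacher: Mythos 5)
Your proposal is correct and follows essentially the same route as the paper: boundedness of the sequence, extraction of a weak limit which solves (\ref{Main problem-critical}), a Brezis--Lieb splitting of the norm and of the singular critical term for $v_k=u_k-u$, and the two identities for $v_k$ combined with the Hardy--Sobolev constant $S_p$ to force $\|v_k\|\to 0$ below the level $\M+\frac{\alpha-s}{2(n-s)}S_p^{\frac{n-s}{\alpha-s}}$. The only (harmless) deviations are refinements: you obtain boundedness directly from the Palais--Smale conditions rather than from coercivity on $\Ne$, and you absorb the case $u\equiv 0$ into the final contradiction via $\I(0)=0>\M$, whereas the paper rules out $u\equiv 0$ by a separate preliminary argument.
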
 

\begin{proof}

It follows from Lemma \ref{Lemma functional I is coercive and bounded below}  that  $(u_k)_{k \in \N}$ is bounded in $\X.$ Then, there exists a sub-sequence - still donote by $u_k$ - and $u$ such that 

\begin{equation}\label{Weakly and strongly convergence of minimizing sequence- critical case}
\begin{aligned}
&u_k \rightharpoonup u \text{ weakly in } \X \\
& u_k \to u \text{ strongly  in } L^r(\Omega) \quad \text{ for every }1\le r <2^*_\alpha. 
\end{aligned}
\end{equation}

Consequently from the second assumption, we obtain 

$$ \langle  I'_\lambda (u) , w \rangle = 0  \quad \forall w \in \X.$$

Then, $u$ is a solution in $\X$ for (\ref{Main problem-critical}) with $I_{\lambda}(u) \ge \M. $

We first prove that $u \not\equiv 0.$ Indeed, suppose $u \equiv 0.$ Then, by $(\ref{Weakly and strongly convergence of minimizing sequence- critical case})_2$, and the fact that $1<q<2<2^*_\alpha,$ we obtain

$$  \int_{\Omega} f(x) |u_k|^q dx   \to  \int_{\Omega} f(x) |u|^q dx  = 0, $$ 

which implies

$$ \int_{\Omega} f(x) |u_k|^q dx=o(1) \quad \text{ as } k \to \infty. $$

Thus, the second assumption yields 

\begin{equation} \label{Critical case - ||u|| = int H-S term + o(1)}
\begin{aligned}
 \|u_k\|^2 &= \lambda  \int_{\Omega} f(x) |u_k|^q dx + \int_{\Omega}  \frac{|u_k|^{2^*_\alpha(s)}}{|x|^{s}} dx \\
& =\int_{\Omega}  \frac{|u_k|^{2^*_\alpha(s)}}{|x|^{s}} dx+o(1)\quad \text{ as } k \to \infty, 
\end{aligned}
\end{equation}

and the first assumption then implies that

\begin{align*}
I_{\lambda,p}(u_k)&= \frac{1}{2} \| u_k\|^2  -\frac{\lambda}{q} \int_{\Omega}  f(x) |u_k|^q dx -\frac{1}{2^*_\alpha(s)}\int_{\Omega}  \frac{|u_k|^{{2^*_\alpha(s)}}}{|x|^{s}} dx\\
&=\left( \frac{1}{2} - \frac{1}{2^*_\alpha(s)}\right) \int_{\Omega} \frac{|u_k|^{2^*_\alpha(s)}}{|x|^{s}} dx +o(1)\\
& = \frac{\alpha -s}{2(n-s)}  \int_{\Omega} \frac{|u_k|^{2^*_\alpha(s)}}{|x|^{s}} dx +o(1)\\
& = \sigma +o(1) \qquad  \text{ as }  k \to \infty.
\end{align*}

Since $\sigma < \frac{\alpha -s}{2(n-s)} S_p^{\frac{n-s}{\alpha-s}},$ we get that  

$$ \int_{\Omega} \frac{|u_k|^{2^*_\alpha(s)}}{|x|^{s}} dx < S_p^{\frac{n-s}{\alpha-s}}+o(1) \quad \text{ as } k \to \infty.$$

On the other hand, it follows from  (\ref{fractional H-S-M inequality}) and (\ref{Critical case - ||u|| = int H-S term + o(1)}) that

$$ \int_{\Omega} \frac{|u_k|^{2^*_\alpha(s)}}{|x|^{s}} dx \ge S_p^{\frac{n-s}{\alpha-s}}+o(1) \quad \text{ as } k \to \infty.$$

This gives us a contradiction which implies that $u$ can not be identically zero, and thus $u \not \equiv 0$ with $\I(u) \ge \M.$

 Let now $v_k = u_k - u,$ for all $k \in \N.$ We may verify as Brézis-Lieb lemma in \cite{Brezis-Lieb} that (see also \cite[Lemma 4.2]{Ghoussoub-Yuan})

$$\int_{\Omega} \frac{|u_k|^{2^*_\alpha(s)}}{|x|^{s}} dx= \int_{\Omega} \frac{|u|^{2^*_\alpha(s)}}{|x|^{s}} dx+ \int_{\Omega} \frac{|v_k|^{2^*_\alpha(s)}}{|x|^{s}} dx +o(1) \quad \text { as } k \to \infty.$$

Hence, by weakly convergence $v_k \rightharpoonup 0$ in $\X$, we can conclude that

\begin{align*}
 \M +\frac{\alpha -s}{2(n-s)} S_p^{\frac{n-s}{\alpha-s}} & > \I(u_2 + v_k)\\
 & = \I(u_2) + \frac{1}{2} \|v_k\|^2 - \frac{1}{2^*_\alpha(s)}\int_{\Omega} \frac{|v_k|^{2^*_\alpha(s)}}{|x|^{s}} dx +o(1)\\
 & \ge \M + \frac{1}{2} \|v_k\|^2 - \frac{1}{2^*_\alpha(s)}\int_{\Omega} \frac{|v_k|^{2^*_\alpha(s)}}{|x|^{s}} dx +o(1) \quad \text{ as } k \to \infty.
\end{align*}

 Then, 

\begin{equation}\label{Norm ||v_k ||_X - H-S term is bounded by S_p}
 \frac{1}{2} \|v_k\|^2 - \frac{1}{2^*_\alpha(s)} \int_{\Omega} \frac{|v_k|^{2^*_\alpha(s)}}{|x|^{s}} dx < \frac{\alpha -s}{2(n-s)} S_p^{\frac{n-s}{\alpha-s}}  +o(1) \quad \text{ as } k \to \infty.
\end{equation}

On the other hand, from the second assumption, we know that $(u_k)_{k \in \N}$ is uniformly bounded and $u$ is  solution of (\ref{Main problem-critical}). So,

\begin{align*}
o(1) &= \I' (u_k,u_k)\\
& = \frac{1}{2} \| u_k\|^2  -\lambda \int_{\Omega}  f(x) |u_k|^q dx -\int_{\Omega}  \frac{|u_k|^{{2^*_\alpha(s)}}}{|x|^{s}} dx\\
& = \I'(u) + \|v_k\|^2 - \int_{\Omega}  \frac{|v_k|^{{2^*_\alpha(s)}}}{|x|^{s}} dx +o(1)  \quad \text{ as } k \to \infty. 
\end{align*} 

Since $\I'(u) =0,$ we have

\begin{equation}\label{Norm ||v_k|| - H-S term is equal to o(1).}
\|v_k\|^2 - \int_{\Omega}  \frac{|v_k|^{{2^*_\alpha(s)}}}{|x|^{s}} dx = o(1)  \quad \text{ as } k \to \infty. 
\end{equation}

Now we prove that if (\ref{Norm ||v_k ||_X - H-S term is bounded by S_p})  and (\ref{Norm ||v_k|| - H-S term is equal to o(1).}) hold, then $(v_k)_{k \in \N}$ admits a sub-sequence which converges strongly to zero. Indeed, if not, there exits a constant $c>0$ such that $\|v_k\|^2_{\X} \ge c >0,$ for all $k \in \N.$ Combining (\ref{Norm ||v_k ||_X - H-S term is bounded by S_p}) and (\ref{Norm ||v_k|| - H-S term is equal to o(1).}) leads us to the following contradiction:

 \begin{align*}
 \frac{\alpha -s}{2(n-s)} S_p^{\frac{n-s}{\alpha-s}}&  \le \frac{\alpha -s}{2(n-s)} \|v_k\|^2 + o(1)\\
 & = \frac{1}{2} \|v_k\|^2_{\X}  - \frac{1}{2_\alpha^*(s)} \|v_k\|^2 +o(1)\\
 & < \frac{\alpha -s}{2(n-s)} S_p^{\frac{n-s}{\alpha-s}} +o(1) \quad \text{ as } k \to \infty.
 \end{align*}

Therefore, up to a sub-sequence, $v_k \to 0 $ strongly in $\X.$ This implies that $u_k \to u$ strongly in $\X.$

\end{proof}

We are now ready to prove the existence results on $\Ne^+.$

\begin{proposition}\label{Proposition - existence of the solution in N- - critical}
For any $\lambda \in (0,\Lambda^*),$ there exists a minimizer  $u_2 \in \Ne^-$ for the functional $\I$ which verifies

\begin{enumerate}
\item $\I(u_2) = \M^- < \M + \frac{\alpha -s}{2(n-s)} S_p^{\frac{n-s}{\alpha-s}} .$
\item $ u_2 $ is a nontrivial non-negative solution of (\ref{Main problem-critical}).
\end{enumerate}
\end{proposition}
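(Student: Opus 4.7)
The argument has two tasks: (i) prove the strict energy bound
\begin{equation*}
\M^- < \M + \frac{\alpha - s}{2(n-s)} S_p^{\frac{n-s}{\alpha-s}},
\end{equation*}
which is the substantive new step; (ii) combine this bound with Proposition~\ref{Proposition - existence of minimizing sequence, using Ekeland principle}(2) and Lemma~\ref{Lemma - any sequence which satisfies two conditions has strongly convergence in X} to extract a strongly convergent minimizing sequence in $\Ne^-$, and verify that its limit lies in $\Ne^-$. The obstacle is entirely in (i): one must construct an element of $\Ne^-$ whose energy is controlled by the sup estimate from the lemma preceding Lemma~\ref{Lemma - any sequence which satisfies two conditions has strongly convergence in X}.

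\textbf{Proof of the energy bound.} Let $u_1 \in \Ne^+$ be the local minimum furnished by Theorem~\ref{Theorem - existence of the solution in N+ - critical}, and for $U_\epsilon$ defined in~(\ref{Definition of U_epsilon}) put $w_\tau := u_1 + \tau U_\epsilon$ for $\tau \ge 0$. Since $U_\epsilon \ge 0$ is supported in $\Sigma = \{f > 0\}$ and $u_1 > 0$, splitting the integral off the support of $U_\epsilon$ gives $\int_\Omega f(x) w_\tau^q\,dx \ge \int_\Omega f(x) u_1^q\,dx > 0$ for every $\tau \ge 0$ (the strict inequality at $\tau = 0$ coming from Lemma~\ref{Lemma - for u in N+, lambda int f >0}). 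Lemma~\ref{Lemma - existence of t+ and t-} then produces a function $\tau \mapsto T(\tau) := t^-(w_\tau) > 0$ with $T(\tau) w_\tau \in \Ne^-$; continuity of $T$ follows from the implicit function theorem applied to $\fy(T w_\tau) = 0$, the non-degeneracy condition $\fy'(T w_\tau)(T w_\tau) < 0$ being valid on $\Ne^-$. Since $u_1 \in \Ne^+$, Lemma~\ref{Lemma - existence of t+ and t-} forces $t^+(u_1) = 1 < t^-(u_1) = T(0)$, while the scaling identity $t^-(aw) = t^-(w)/a$ (for $a > 0$) together with $w_\tau/\tau \to U_\epsilon$ in $\X$ as $\tau \to \infty$ yields $T(\tau) = t^-(w_\tau/\tau)/\tau \to 0$. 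By the intermediate value theorem there exists $\tau_\epsilon > 0$ with $T(\tau_\epsilon) = 1$, i.e., $u_1 + \tau_\epsilon U_\epsilon \in \Ne^-$ directly. Invoking the sup estimate of the preceding lemma now gives
\begin{equation*}
\M^- \le \I(u_1 + \tau_\epsilon U_\epsilon) \le \sup_{t \ge 0}\, \I(u_1 + t U_\epsilon) < \M + \frac{\alpha - s}{2(n-s)} S_p^{\frac{n-s}{\alpha-s}}.
\end{equation*}

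\textbf{Extracting and identifying the minimizer.} By Proposition~\ref{Proposition - existence of minimizing sequence, using Ekeland principle}(2) choose $(u_k)_{k\in\N} \subset \Ne^-$ with $\I(u_k) \to \M^-$ and $\I'(u_k) \to 0$ in $(\X)'$. The energy bound triggers Lemma~\ref{Lemma - any sequence which satisfies two conditions has strongly convergence in X}, so after passing to a subsequence $u_k \to u_2$ strongly in $\X$; continuity of $\I$ and $\I'$ then gives $\I(u_2) = \M^-$ and $\I'(u_2) = 0$, so $u_2$ is a weak solution of~(\ref{Main problem-critical}). For any $u \in \Ne^-$, the defining inequality $(2-q)\|u\|^2 < (p-q)\int_\Omega |u|^p/|x|^s\,dx$ combined with the Hardy-Sobolev inequality~(\ref{fractional H-S-M inequality}) gives a uniform lower bound $\|u\| \ge c > 0$, whence $u_2 \not\equiv 0$. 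Passing to the limit in $\fy(u_k) = 0$ and $\fy'(u_k)(u_k) < 0$ places $u_2$ in $\Ne^- \cup \Ne^0$; since $\Ne^0 = \emptyset$ for $\lambda < \Lambda^* \le \Lambda_1$ by Lemma~\ref{Lemma - Nehari0 is an emptyset }, we conclude $u_2 \in \Ne^-$. Finally, as in the proof of Theorem~\ref{Theorem - existence of the solution in N-}, replacing $u_2$ by $|u_2|$ yields the desired nontrivial non-negative solution on $\Ne^-$.
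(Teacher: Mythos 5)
Your proof is correct and follows the paper's overall skeleton --- control $\M^-$ by the estimate $\sup_{t\ge 0}\I(u_1+tU_\epsilon)<\M+\frac{\alpha-s}{2(n-s)}S_p^{\frac{n-s}{\alpha-s}}$ and then combine the Ekeland sequence of Proposition \ref{Proposition - existence of minimizing sequence, using Ekeland principle}(2) with Lemma \ref{Lemma - any sequence which satisfies two conditions has strongly convergence in X} --- but you produce a point of $\Ne^-$ on the ray $u_1+tU_\epsilon$ by a genuinely different device. The paper disconnects $\X\setminus\Ne^-$ into $W_1=\{t^-(u/\|u\|)>\|u\|\}\cup\{0\}$ and $W_2=\{t^-(u/\|u\|)<\|u\|\}$, proves a uniform bound $t^-\bigl(\tfrac{u_1+n_0U_\epsilon}{\|u_1+n_0U_\epsilon\|}\bigr)<C$ to place $u_1+n_0U_\epsilon$ in $W_2$ while $u_1\in\Ne^+\subset W_1$, and then passes through the min--max level $c^\star$ over paths joining them to get $\M^-\le c^\star<\M+\frac{\alpha-s}{2(n-s)}S_p^{\frac{n-s}{\alpha-s}}$; you instead apply the intermediate value theorem to $T(\tau)=t^-(u_1+\tau U_\epsilon)$, using $T(0)=t^-(u_1)>1$ (because $t^+(u_1)=1$) and $T(\tau)\to 0$ via the scaling $t^-(aw)=t^-(w)/a$, which lands $u_1+\tau_\epsilon U_\epsilon\in\Ne^-$ directly and dispenses with the $W_1/W_2$ disconnection (which the paper asserts without proof) and with the mountain-pass value. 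Both routes rest on the same ingredients (uniqueness and continuity of $t^-$, positivity of $\int_\Omega f|u_1+\tau U_\epsilon|^q dx$); the one point you should make explicit is that $T(\tau)\to 0$ uses boundedness of $t^-(u_1/\tau+U_\epsilon)$ as $\tau\to\infty$, which follows either from your implicit-function continuity argument applied at $U_\epsilon$ or from the paper's contradiction argument (if $t^-(v_k)\to\infty$ along unit vectors $v_k$ with $\int_\Omega |v_k|^{2^*_\alpha(s)}|x|^{-s}dx$ bounded away from $0$, then $\I(t^-(v_k)v_k)\to-\infty$, contradicting boundedness below on $\Ne$). Your final step is actually more careful than the paper's: you check $u_2\not\equiv 0$ via the uniform lower bound on $\|u\|$ over $\Ne^-$ and place the strong limit in $\Ne^-$ using $\Ne^0=\emptyset$, whereas the paper simply asserts $u_2\in\Ne^-$; both arguments then pass to $|u_2|$ identically to obtain a nontrivial non-negative solution.
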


\begin{proof}

We first show that

\begin{equation}\label{M- < M + alpha-s/2(n-s) (s_p)^ n-s/alpha-s }
 \M^- < \M + \frac{\alpha -s}{2(n-s)} S_p^{\frac{n-s}{\alpha-s}} .
\end{equation} 

Let

$$W_1: = \left\{u \in \X \setminus \{0\}: t^-(\frac{u}{\|u\|}) >        \|u\| \right\}\cup\{0\},$$
and

$$W_2: = \left\{u \in \X \setminus \{0\}: t^-(\frac{u}{\|u\|}) <        \|u\| \right\}.$$

Thus, $\Ne^-$ disconnects  $\X$ in two connected components $W_1$ and $W_2,$ and $\X \setminus \Ne^- = W_1 \cup W_2.$  By Lemma \ref{Lemma - existence of t+ and t-}, for any $u \in \Ne^+,$ there exists a unique $t^-(\frac{u}{\|u\|})>0$ such that $1 < t_{max}<t^-(u).$ Since $t^-(u) = \frac{1}{\|u\|} t^-(\frac{u}{\|u\|}).$ Then, $t^-(\frac{u}{\|u\|}) >        \|u\|, $ and $\Ne^+ \subset W_1.$ In particular, $u_1 \in W_1.$ 

Next step is to show that there exists $n_0>0$ such that $u_1 + n_0 U_\epsilon \in W_2.$  To prove this, we first note that there exists $C>0$ such that

\begin{equation} \label{0< t-(u_1+ n_0 eta u_epsilon/ ||u_1+ n_0 eta u_epsilon ||) <C}
0< t^- (\frac{u_1 + n_0 U_\epsilon}{\|u_1 + n_0 U_\epsilon\|}) <C \quad  \text{ for all } n_0 >0.
\end{equation}

Indeed, if not, there exists a sub-sequence $(n_k)_{k \in \N}$ such that    

$$n_k \to \infty \quad \text{ and } \quad  t^- (\frac{u_1 + n_k U_\epsilon}{\|u_1 + n_k U_\epsilon\|}) \to 0 \quad \text{ as } k \to \infty. $$

For all $k \in \N,$ let $v_k = \frac{u_1 + n_k U_\epsilon}{\|u_1 + n_k U_\epsilon\|}.$ So,  Lemma \ref{Lemma - existence of t+ and t-} implies that $t^- (v_k) v_k \in \Ne^- \subset \Ne$  for all $k \in \Ne.$ 

Then, a straightforward computation and the Lebesgue
dominated convergence theorem yield  

\begin{align*}
\int_{\Omega}  \frac{|v_k|^{{2^*_\alpha(s)}}}{|x|^{s}} dx &= \frac{1}{\|u_1 + n_k U_\epsilon\|^{2_\alpha^*(s)}} \int_\Omega \frac{|u_1 + n_k U_\epsilon|^{2_\alpha^*(s)}}{|x|^s} dx\\
& = \frac{1}{\| \frac{u_1}{n_k} +  U_\epsilon\|^{2_\alpha^*(s)}} \int_\Omega \frac{| \frac{u_1}{n_k} +  U_\epsilon|^{2^*(s)_\alpha}}{|x|^s} dx.
\end{align*}

Hence,

\begin{align*}
\int_{\Omega}  \frac{|v_k|^{{2^*_\alpha(s)}}}{|x|^{s}} dx \to  \frac{1}{\|  U_\epsilon\|^{2_\alpha^*(s)}} \int_\Omega \frac{|  U_\epsilon|^{2^*(s)_\alpha}}{|x|^s} dx >0 \quad \text{ as } k \to \infty.
\end{align*}

On the other hand, $\text{ as } k \to \infty,$ we have 
 
\begin{align*}
&\I(t^- (v_k) v_k)\\
& \quad  = \frac{1}{2} [t^- (v_k)]^2 \| v_k\|^2 -\frac{\lambda}{q}  [t^- (v_k)]^q \int_{\Omega}  f(x) |v_k|^q dx - \frac{1}{2_{\alpha}^*(s)}[t^- (v_k)]^{2_{\alpha}^*(s)} \int_{\Omega}  \frac{|v_k|^{2_{\alpha}^*(s)}}{|x|^{s}} dx  \to -\infty. 
\end{align*}

This contradicts the fact that $\I$ is bounded below. Thus, (\ref{0< t-(u_1+ n_0 eta u_epsilon/ ||u_1+ n_0 eta u_epsilon ||) <C}) holds.

Now let  $n_0 := \frac{|C^2 - \|u_1\|^2 |^{\frac{1}{2}}}{\| U_\epsilon \|} +1.$ So,

\begin{align*}
\|u_1 + n_0 U_\epsilon\|^2 & = \|u_1\|^2 + n_0^2 \|U_\epsilon \|^2  + 2 n_0 \left( C_{n,\alpha}\langle  u_1, U_\epsilon \rangle_{\X} - \gamma \int_{\Omega} \frac{u_1 U_\epsilon}{|x|^\alpha} dx  \right)\\
& \ge \|u_1\|^2+ \left| C^2 - \|u_1\|^2\right| \\
& \ge C^2 \\
& > \left|   t^- (\frac{u_1 + n_0 U_\epsilon}{\|u_1 + n_0 U_\epsilon\|})  \right|^2,
\end{align*}

which gives

$$ t^- (\frac{u_1 + n_0 U_\epsilon}{\|u_1 + n_0 U_\epsilon\|}) <  \|u_1 + n_0 U_\epsilon\|.$$

This proves that   $u_1 + n_0 U_\epsilon \in W_2.$

Now define 

$$\Gamma:= \left\{ \tau \in C([0,1], \X) : \tau(0) = u_1 \text{ and } \tau(1)= u_1 + n_0 U_\epsilon  \right\},$$

$$ c^\star:= \inf\limits_{\tau \in \Gamma} \max\limits_{\xi \in [0,1]} \I(\tau(\xi)) \ \text{ and } \
{\gamma^\star}(\xi) = u_1 + \xi  n_0 U_\epsilon \quad \text{ for } \xi \in [0,1].$$

We have $\gamma^\star(0) \in W_1$ and $\gamma^\star (1) \in W_2.$ So, there exists $\xi_0 \in (0,1)$ such that $\gamma^\star(\xi_0) \in \Ne^-$ and $c^\star  \ge \M^-.$ It also follows from Lemma \ref{Lemma - any sequence which satisfies two conditions has strongly convergence in X} that 

$$ \M^- < c^\star < \M + \frac{\alpha -s}{2(n-s)} S_p^{\frac{n-s}{\alpha-s}}.$$

The Ekeland's variational principle yields that there exists a sequence $(u_k)_{k \in \N} \subset \N^-$ such that 

$$ \I(u_k) = \M^- +o(1) \quad \text{ and } \ \I'(u_k) =o(1) \  \text{ in } (\X)'. $$

We use Lemma \ref{Lemma - any sequence which satisfies two conditions has strongly convergence in X} and  (\ref{M- < M + alpha-s/2(n-s) (s_p)^ n-s/alpha-s }) to get that there exist a sub-sequence $(u_k)_{k \in \N}$ and $u_2$ such that 
$u_k \to u_2$ strongly in $\X.$ So, we have  that $u_2 \in \Ne^-$ and $\I(u_k) \to \I(u_2)=\M^-  \text{ as } k \to \infty.$

Since $\I(u_2) = \I(|u_2|),$ and $|u_2| \in \Ne^-$ is a solution for (\ref{Main problem-critical}), without loss of generality, we may assume  that $u_2$ is a non-negative solution for (\ref{Main problem-critical}), and the maximum principle \cite[Proposition 2.2.8]{Silvestre} implies that $u_2>0$ in $\Omega.$

\end{proof}

\begin{proof}[ Proof of Theorem \ref{Theorem - Critical}]

It follows from Theorem \ref{Theorem - existence of the solution in N+ - critical} and Proposition \ref{Proposition - existence of the solution in N- - critical}
that there exist two positive solutions $u_1$ and $u_2$ such that $u_1 \in \Ne^+$  and $u_2 \in \Ne^-.$ In addition, we have $\Ne^+ \cap \Ne^- = \emptyset.$ Thus,  $u_1$ and $u_2$ are two  distinct positive solutions for (\ref{Main problem-critical}). 

\end{proof}

\end{document}